\documentclass[11pt,reqno]{amsart}
\usepackage{fullpage}
\usepackage{comment}
\usepackage{footmisc}
\usepackage{amssymb}
\usepackage{tikz-cd}
\usepackage{tikz}
\usepackage{amsthm}
\usepackage{lscape}
\usepackage{array}
\usepackage{colonequals}
\usepackage{graphicx}
\usepackage{caption}
\usepackage{subcaption}
\usepackage[pagebackref=true, colorlinks = true, allcolors=magenta]{hyperref}
\usepackage{cleveref} 
\usepackage{longtable}
\usepackage{booktabs}
\usepackage{float}
\usepackage{makecell}
\usepackage{array}
\usepackage{footnote}

\renewcommand*{\backrefalt}[4]{%
  \ifcase #1 %
    \relax
  \or
    $\uparrow$#2.%
  \else
    $\uparrow$#2.%
  \fi%
}

\DeclareMathOperator{\Ns}{Ns}
\DeclareMathOperator{\ns}{ns}

\DeclareMathOperator{\ag}{ag}
\DeclareMathOperator{\cyc}{cyc}

\DeclareMathOperator{\sgn}{sgn}
\DeclareMathOperator{\Aut}{Aut}

\DeclareMathOperator{\Gal}{Gal}

\DeclareMathOperator{\lcm}{lcm}

\DeclareMathOperator{\PSL}{PSL}
\DeclareMathOperator{\GL}{GL}
\DeclareMathOperator{\PGL}{PGL}

\DeclareMathOperator{\SL}{SL}

\DeclareMathOperator{\Hom}{Hom}

\DeclareMathOperator{\red}{red}
\DeclareMathOperator{\gon}{gon}

\theoremstyle{plain}\newtheorem{ithm}{Theorem}
\theoremstyle{plain}
\theoremstyle{plain}
\theoremstyle{plain}
\theoremstyle{plain}\newtheorem{iconj}[ithm]{Conjecture}
\theoremstyle{plain}\newtheorem{iex}[ithm]{Example}

\newtheorem{theorem}{Theorem}[section]
\newtheorem*{theorem*}{Theorem}
\newtheorem{lemma}[theorem]{Lemma}
\newtheorem{conjecture}[theorem]{Conjecture}
\newtheorem{proposition}[theorem]{Proposition}
\newtheorem*{proposition*}{Proposition}
\newtheorem{corollary}[theorem]{Corollary}
\newtheorem*{corollary*}{Corollary}

\newtheorem{question}[theorem]{Question}
\theoremstyle{definition}
\newtheorem{remark}[theorem]{Remark}
\newtheorem{definition}[theorem]{Definition}
\newtheorem{example}[theorem]{Example}

\numberwithin{equation}{section}

\newcommand{\Q}{\mathbb{Q}}
\newcommand{\Z}{\mathbb{Z}}

\newcommand{\Zhat}{\widehat{\mathbb{Z}}}

\newcommand{\C}{\mathbb{C}}
\newcommand{\F}{\mathbb{F}}
\newcommand{\PP}{\mathbb{P}}

\newcommand{\Qbar}{\overline \Q}

\def\torz#1{\mathbb Z /#1 \mathbb Z}

\newcommand{\diamondop}[1]{\langle #1 \rangle} 
\newcommand{\set}[1]{\left\lbrace #1 \right\rbrace}

\newcommand{\githubbare}[1]{\href{https://github.com/nt-lib/twist-parametrized/blob/main/#1}{\path{#1}}}

\newcommand{\lmfdbmc}[1]{\href{https://beta.lmfdb.org/ModularCurve/Q/#1}{#1}}
\newcommand{\gitlink}[2]{\href{https://github.com/nt-lib/twist-parametrized/blob/main/#1}{#2}}

\title{Rational points on modular curves: parameterization and geometric explanations}

 \author[Derickx]{Maarten Derickx}
\address{Maarten Derickx, University of Zagreb, Faculty of Science, Department of Mathematics, Bijeni\v{c}ka Cesta 30, 10000 Zagreb, Croatia}
\email{\url{maarten@mderickx.nl}}
\urladdr{\url{https://www.maartenderickx.nl/}}

\author{Sachi Hashimoto}
 \address{Sachi Hashimoto, Department of Mathematics, Brown University, Box 1917, 151 Thayer St., Providence, RI, 02912, USA} 
 \email{\url{sachi\_hashimoto@brown.edu}}
 \urladdr{\url{https://sachihashimoto.github.io}}

\author[Najman]{Filip Najman}
\address{Filip Najman, University of Zagreb, Faculty of Science, Department of Mathematics, Bijeni\v{c}ka Cesta 30, 10000 Zagreb, Croatia}
\email{\url{fnajman@math.hr}}
\urladdr{\url{https://web.math.pmf.unizg.hr/~fnajman/}}
 
\author{Ari Shnidman}
 \address{Ari Shnidman, Department of Mathematics, Temple University, Philadelphia, USA} 
 \email{\url{ari.shnidman@gmail.com}}

\begin{document}
\begin{abstract}
We show that, conditional on Zywina's effective version of the Serre uniformity conjecture, there is a natural way to parameterize non-CM $\Q$-rational points on all modular curves in terms of the rational points on finitely many modular curves.  
Our proof refines Zywina's work to give a (conditional) parameterization of the  images of adelic Galois representations of elliptic curves. In particular, we show that there are $41$ $j$-invariants of elliptic curves whose associated Galois image does not vary in an infinite family.  
Using our explicit parameterization, we show that all rational points on all modular curves arise from the geometry of modular curves in a formal sense, confirming a philosophy of Mazur and Ogg. 
\end{abstract}

\maketitle

\section{Introduction}

In 1977, Mazur classified all possible rational torsion subgroups of elliptic curves over $\Q$ \cite{Mazur77RatPoints}. 
More precisely, he showed that given a finite abelian  group $H$, the open modular curve classifying elliptic curves $E$ over $\Q$ with $E(\Q)_{\mathrm{tors}} \simeq H$ has rational points if and only if it has genus $0$.

While announcing his result, Mazur formulated a more general question he called ``Program B".  It asks, for any number field $K$ and for any open subgroup $G \subset \GL_2(\Zhat)$, to classify the elliptic curves $E/K$ whose Galois representation 
\[\rho_E \colon \Gal_K \longrightarrow \Aut(\varprojlim_N E[N]) \simeq \GL_2(\Zhat)\]
has image $G_E \colonequals \rho_E(\Gal_K)$ which is conjugate to a subgroup of $G$. 
Such an elliptic curve $E$ gives rise to a $K$-rational point on the modular curve $X_G$ (see Section \ref{sec: definitions}). Thus, informally, Mazur's Program B asks for a classification of the  $K$-rational points on all modular curves, or alternatively a classification of the images of Galois representations of elliptic curves over $K$. 

Mazur's Program B is a broad aspirational goal rather than a single precise question. 
However, in the last 50 years, number theorists have proved and conjectured many precise results in this direction.  The main goal of this paper is to reformulate Mazur's Program B as three different precise questions, and to present conditional solutions to two of them, in the case $K = \Q$. One of these  questions was implicitly answered by Zywina (see \cite[Section 14]{zywina_explicit_images}); here we reformulate, systematize, and give the sharpest possible refinement that comes out of his work. 
We thereby give a (conditional) classification of the groups $G_E$, as $E$ varies over elliptic curves over $\Q$, by parameterizing them in terms of the rational points on $160$ explicit modular curves. We then use this parameterization to show that, in a certain rigorous sense, all $\Q$-rational points on all modular curves ``come from geometry''.

\subsection{Three interpretations of Mazur's Program B}

\begin{enumerate}
    \item (Algorithmic question) \label{programb:alg} The most literal interpretation of Program B is that it asks for a description of $X_G(K)$  given both $G$ and $K$. We formulate this interpretation as follows: determine an algorithm that, given a $K$ and a $G$, returns a description of $X_G(K)$. 

    \item (Parameterization question) \label{programb:param} One may also interpret Program B as asking for a generalization of Mazur's classification of the rational points on the curves $X_1(N)$ to all modular curves and all number fields. More precisely: given $K$, is there a simple classification or parameterization of all $K$-rational points on all modular curves $X_G$? Alternatively, is there a classification  of the pairs $(E,G_E)$, where $E$ is an elliptic curve over $K$. 

    \item (Geometric characterization) \label{programb:geom} Finally, one may interpret Program B as asking for a geometric characterization of the rational points on all modular curves, in the spirit of Mazur's characterization of the non-cuspidal rational points on the curves $X_1(N)$, i.e.\ that they exist if and only if the genus of $X_1(N)$ is $0$. Ogg similarly observed that the rational points on the curves $X_0(N)$ are also ``explained by geometry'' \cite{OggDiophantine}.  
\end{enumerate}

For certain subclasses of modular curves, an answer to questions (\ref{programb:param}) or (\ref{programb:geom}) also gives an answer to (\ref{programb:alg}). For example, there is now no need for an algorithm to compute $\Q$-rational points on the curves $X_1(N)$, because Mazur's theorem  classifies all the rational points on these curves.  Of course, to {\it prove} such classifications, one typically needs to explicitly compute the rational points on finitely many modular curves. Part of the appeal of ``Program B'' is this interplay between algorithms and sporadic examples on the one hand, and uniform theoretical statements on the other hand.  

Mazur's Program B itself concerns the class of {\it all} modular curves, which includes  infinitely many curves $X_G$ of genus $g > 1$ with non-cuspidal, non-CM rational points. It is a priori not clear what a ``parameterization'' or ``geometric characterization'' of the rational points on all curves $X_G$ might look like. However, Zywina \cite{zywina_explicit_images} recently discovered an underlying structure that makes such a parameterization possible, and using his work we will give conditional solutions to questions (\ref{programb:param}) and (\ref{programb:geom}) over $\Q$, by grouping modular curves into finitely many twist families.

\subsection{Parameterization}
To formulate our results precisely requires some definitions (see Section \ref{sec: definitions} for details). For each open $G \leq \GL_2(\Zhat)$, there is a
Deligne--Mumford $\Q$-stack $\mathcal X_G$ whose non-cuspidal part parameterizes pairs
$(E,\alpha)$, with $E$ an elliptic curve and $\alpha$ a $G$-orbit
of isomorphisms $\Zhat^2 \simeq \varprojlim_N E[N]$. Its coarse space $X_G$ is a smooth projective algebraic curve over $\mathbb Q$. The map $\mathcal{X}_G \to X_G$ induces a bijection on rational points\footnote{That is, it induces a bijection between the set of isomorphism classes of the groupoid $\mathcal{X}_G(\Q)$ and $X_G(\Q)$.}. We use both the stack and the coarse space in what follows, and we refer to both as ``modular curves''.
We say a $K$-rational point on a modular curve is \emph{special} if it is a cusp or a CM point.
If $G_E$ is conjugate to a subgroup of $G$, then $E$ gives rise to a rational point on $\mathcal{X}_G$. Moreover, $G_E$ is an open subgroup of $\GL_2(\Zhat)$ if and only if $E$ is non-CM, and in the CM case we understand the groups $G_E$ via the theory of complex multiplication.  Thus, version (\ref{programb:param}) of Mazur's Program B is asking for a parameterization of all non-special $K$-rational points on the modular curves $\mathcal{X}_{G_E}$.\footnote{To classify the groups $\pm G_E$, it is enough to consider the coarse curves $X_G$. This would simplify some of the stacky technicalities that follow, but such a classification does not formally imply a classification of the groups $G_E$.} 

 For each open $G$, there is a finite abelian group $T_G \leq \Aut(\mathcal{X}_G)$ of {\it toric operators}, generalizing the classical diamond operators.  The quotient $\mathcal{X}_G/T_G$ is the modular curve $\mathcal{X}_{G^{\mathrm{ag}}}$, where $G \leq G^\mathrm{ag}$ is Zywina's agreeable closure, 
 and so $T_G \simeq G^\mathrm{ag}/G$ as groups. See Section \ref{sec: definitions} for precise definitions.

The map $\mathcal{X}_G \to \mathcal{X}_{G^{\ag}}$ induced by the inclusion $G \leq G^\mathrm{ag}$ has the interesting property that all of its twists are modular curves. 
That is, for each group homomorphism $\chi \colon \Gal_\Q \to T_G$, the twisted curves $\mathcal{X}_G^\chi$ and $X_G^\chi$ (in the sense of \cite[III.1.3]{Serre-GaloisCohomology}) are respectively identified with modular curves $\mathcal{X}_{G^\chi}$ and $X_{G^\chi}$, for a certain open subgroup $G^\chi \leq \GL_2(\Zhat)$ of the same index as $G$ (Proposition \ref{prop:gps-curves}).
 The group $T_G$ acts on each curve $\mathcal{X}_G^\chi = \mathcal{X}_{G^\chi}$ in the twist family. 
The induced map on coarse spaces $\pi_G \colon X_{G} \to X_{G^\mathrm{ag}}$ is a $\overline{T}_G$-cover, where $\overline{T}_G \simeq G^{\mathrm{ag}}/{\pm}{G}$. We have $T_G = \overline{T}_G$ if and only if $-I \in G$.

 A simple but crucial observation is that the non-special rational points on the coarse curves $X_{G^\chi}$ (for fixed $G$ but varying $\chi$) are parameterized by the non-special rational points on the {\it single} modular curve $X_{G^\mathrm{ag}}$. Indeed, if $x \in X_{G^{\mathrm{ag}}}(\Q)$ is non-special, we may identify $\pi_G^{-1}(x)$ with $\overline{T}_G$, and then  $\Gal_\Q$ acts on the fiber $\pi_G^{-1}(x)$  via some character $\chi_x \colon \Gal_\Q \to \overline{T}_G$. The rational point $x$ then lifts to a unique $\overline{T}_G$-orbit of rational points on $X_G^{\chi_x}$ (which is identified with $X_{G^\chi}$, for any $\chi\colon \Gal_\Q \to T_G$ lifting $\chi_x$). 
Thus, the data of just $X_G \to X_{G^{\mathrm{ag}}}$ and $X_{G^\mathrm{ag}}(\Q)$  allows for a parameterization of the rational points on all the curves $ X_{G^\chi}$.

With this structure in mind, the following is a precise formulation of question (\ref{programb:param}) above.  
\begin{iconj}[Galois Images Parametrization Conjecture]\label{conj: mazur program B revised}
    There exist finitely many explicit open subgroups $G_1,\ldots, G_n$ of $\GL_2(\Zhat)$ with the following property. If $E$ is a non-CM elliptic curve over $\Q$, then there is an index $i$ and a point $x \in X_{G_i^\mathrm{ag}}(\Q)$ with $j$-invariant $j(x) = j(E)$ such that  
    $G_E$ is conjugate to  $G^\chi_i$, where $\chi \colon \Gal_\Q \to T_{G_i}$ is a lift of the character $\chi_x \colon \Gal_\Q \to \overline{T}_{G_i}$ describing the $\overline{T}_{G_i}$-action on $\pi_{G_i}^{-1}(x)$. 
\end{iconj}

The conjecture says that the groups $G_E$, as $E$ varies, are ``parameterized'' by the rational points on the finitely many modular curves $X_{G_i^{\mathrm{ag}}}$. Of course, one must remember the original groups $G_i$ because the parameterization uses the maps $X_{G_i} \to X_{G_i^\mathrm{ag}}$ and the groups $G_i^\chi$. We note that $\chi$ is uniquely determined from $E$ and $x$, even if $\overline{T}_{G_i} \neq T_{G_i}$. In that case, a quadratic twist $E^d$ of $E$ gives rise to the same point $x$, and its corresponding character $\chi_d$ is the ``quadratic twist'' of $\chi$. Thus, strictly speaking, the sets $X_{G_i^\mathrm{ag}}(\Q)$  parameterize the groups $G_E$ only up to quadratic twist.

We note that the {\it existence}  of finitely many curves $G_i$ as in Conjecture \ref{conj: mazur program B revised} is equivalent to the Serre uniformity conjecture since 
there are only finitely many agreeable modular curves of bounded index (Proposition \ref{prop: bounded index}), 
but even this is an open question. However, a crucial part of the conjecture is to determine the $G_i$ {\it explicitly}.  Provably determining the set $X_{G_i^{\mathrm{ag}}}(\Q)$ for each $G_i$ should of course also be considered part of the conjecture, and this can itself be a difficult task (see e.g. \cite{Balakrishnan}).

In \cite{zywina_explicit_images}, Zywina 
gives a list of 524 groups\footnote{See the two paragraphs after \cite[Theorem 1.9]{zywina_explicit_images} for the genera of these groups.}  $G_i$ which he proves satisfy Conjecture \ref{conj: mazur program B revised} assuming Conjecture \ref{conj: GIC}.
Our first result is the following reinterpretation and refinement of Zywina's results, giving an optimal conjectural form of the solution to Conjecture \ref{conj: mazur program B revised}.
\begin{ithm}\label{thm 1+2}
    Assume Conjecture \ref{conj: GIC}. For $i \in \{1,\ldots, 160\}$, let $(K_i,M_i)$ be the groups listed in Tables~\ref{table:twist isolated} and \ref{table:bigtableofgroups}, corresponding to the 
    $T_i$-cover $\mathcal{X}_{K_i} \to \mathcal{X}_{M_i}$, where $T_i = M_i/K_i$ and the $\overline{T}_i$-cover $\pi_i \colon X_{K_i} \to X_{M_i}$, where $\overline{T}_i = M_i/{\pm}{K_i}$.  Then for any  non-CM elliptic curve $E$ over $\Q$, there exists an index $i$ and a rational point $x\in X_{M_i}(\Q)$ with $j(x)=j(E)$ such that $G_E$ is conjugate to $K_i^{\chi}$, where $\chi \colon \Gal_\Q \to T_i$ is a lift of the character $\chi_x \colon \Gal_\Q \to \overline{T}_i$ describing the   $~\overline{T}_i$-action on $\pi_i^{-1}(x)$. 
\end{ithm}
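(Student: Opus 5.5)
The plan is to start from Zywina's conditional classification and condense it. Assuming Conjecture~\ref{conj: GIC}, Zywina \cite[Section 14]{zywina_explicit_images} gives a finite list of agreeable groups $\mathcal{G}$ with the following property, which we take as a black box. For every non-CM $E/\Q$, the agreeable closure $G_E^{\ag}$ is conjugate to some $G \in \mathcal{G}$ with $j(E) \in \pi(X_G(\Q))$. Moreover, $G_E$ is then recovered as a subgroup of $G$ that is a ``twist'' of one of finitely many explicit subgroups $H \le G$ with $H^{\ag}=G$.

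The first step is to rephrase this in the language of toric operators. For a fixed $H$ with $H^{\ag}=M$, Proposition~\ref{prop:gps-curves} identifies the twists $\mathcal{X}_H^\chi$ with $\mathcal{X}_{H^\chi}$. Since $H\trianglelefteq M$ with abelian quotient, the subgroups of $M$ that are $M$-conjugate twists of $H$ are exactly the $H^\chi$ for $\chi\colon\Gal_\Q\to M/H$. Now suppose $E$ gives a point $x\in X_M(\Q)$ and $G_E\subseteq H^{\chi}$ up to conjugacy. Then the Galois action on the fiber $\pi^{-1}(x)\simeq \overline{T}$ is given by the image of $\chi$ in $\overline T$, because $\Gal_\Q$ acts on level structures through $\rho_E$. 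So $\chi$ lifts $\chi_x$. This handles each pair $(H,M)$ uniformly, and reduces the theorem to producing a finite list of pairs $(K_i,M_i)$ such that every non-CM $G_E$ equals (not merely lies in) some twist $K_i^\chi$ with $M_i=K_i^{\ag}$.

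The second step is to extract from Zywina's list the pairs giving equality rather than containment, and then prune. Two kinds of pairs arise. First, there are the generic images: for each $M$ whose curve $X_M$ has infinitely many points, take the minimal $K$ occurring for a generic $x$. Second, there are the ``twist-isolated'' $j$-invariants, those whose Galois image does not vary in a family, each with its own pair from Table~\ref{table:twist isolated}. Pruning consists of discarding any pair $(K,M)$ whose twists are already covered by another pair $(K',M')$. Concretely, if $M\le M'$ (up to conjugacy) and every twist $K^\chi$ equals some twist $K'^{\chi'}$, then the pair is redundant. Likewise, if $X_M(\Q)$ consists only of special points, or its non-special points have $j$-invariants already accounted for, we discard it.

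The main obstacle is the bookkeeping and verification of the resulting list of 160 pairs. This means checking group-theoretically, by computer and over possibly large levels, that each discarded pair is subsumed, and that for each $E$ the exact image $G_E$, not just $\pm G_E$, is a twist of the listed $K_i$. The difference between $T_i$ and $\overline{T}_i$ when $-I\notin K_i$ needs care here. Quadratic twisting of $E$ changes $\chi$ by the lift ambiguity, so any lift is realized by some twist of $E$. We would therefore verify equality of $G_E$ with $K_i^\chi$ for the correct lift determined by $E$ itself.
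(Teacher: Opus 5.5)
Your proposal is correct and follows essentially the paper's argument. Under Conjecture~\ref{conj: GIC}, Zywina's results give either $G_E^{\ag}\in A_\infty$ (up to conjugacy) or $j(E)\in J_{\rm exc}$, and Lemma~\ref{lem: G_Eappears} places $G_E$ in a twist family. These families merge by containment into the $138$ families over curves with infinitely many points and the $22$ families carrying the $41$ twist-isolated $j$-invariants, and Lemma~\ref{lem: abelian twist lifts} identifies $\chi$ as a lift of $\chi_x$.

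One correction: drop the requirement $M_i=K_i^{\ag}$. The tabulated $K_i$ need not satisfy it; for example $K_1$, the group of $X_{\rm ns}(2)$, is itself agreeable. Your argument never uses it: it needs only $K_i\trianglelefteq M_i$ with abelian quotient and $\det K_i=\Zhat^\times$, and replacing $K_i$ by a twist leaves the family $\{K_i^\chi\}$ unchanged (cf.\ Lemma~\ref{lem: ag_closure}).
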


Theorem \ref{thm 1+2} states that the rational points on the 160 curves $X_{M_i}$ parameterize the groups $G_E$ arising from elliptic curves, up to quadratic twist. 
Moreover, it is not possible to choose fewer modular curves (or equivalently fewer pairs $(K_i,M_i)$), to parametrize all possible $G_E$.
Among the $X_{M_i}$, the first 138 have infinitely many rational points and the remaining 22 have finitely many rational points.  For $i \in \{1,\ldots, 138\}$, we call the curves $X_{K_i}^\chi$ {\it twist parameterized}, since their rational points are collectively parameterized by the infinitely many rational points on the single curve $X_{M_i}$. See Section \ref{sec:examples} for examples of how this parameterization looks in practice.  For $i \in \{139,\ldots, 160\}$, we call the curves $X_{H_i}$ {\it twist isolated}. See Section \ref{sec:twistisolatedparam} for the proper definition of both of these terms.

A non-CM $j$-invariant $j_0 \in \Q$ is \textit{twist isolated} if there exists some open $G\leq \GL_2(\Zhat)$ and $x\in \mathcal X_G(\Q)$ with $j(x)=j_0$ and $\mathcal X_G$  twist isolated.
As a byproduct of Theorem \ref{thm 1+2}, we prove:
\begin{ithm}\label{thm: 41 twist isolated points}
    Assuming Conjecture \ref{conj: GIC}, the twist-isolated $j$-invariants are the $41$ listed in  Table \ref{table:twist isolated}.
\end{ithm}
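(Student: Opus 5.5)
To prove Theorem~\ref{thm: 41 twist isolated points}, I would derive both inclusions from Theorem~\ref{thm 1+2} and the explicit tables, using that twist isolatedness depends only on the twist family (i.e.\ on the pair $(G,G^{\mathrm{ag}})$ up to twisting by characters into $T_G$).

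\emph{Every listed $j$-invariant is twist isolated.} For each $j_0$ in Table~\ref{table:twist isolated} I would point to one of the pairs $(K_i,M_i)$ with $i\in\{139,\dots,160\}$ and a rational point $x\in X_{M_i}(\Q)$ with $j(x)=j_0$. I would then lift $x$, as in the discussion before Conjecture~\ref{conj: mazur program B revised}, to a rational point on the twist $\mathcal X_{K_i}^{\chi}=\mathcal X_{K_i^\chi}$, where $\chi$ lifts $\chi_x$. The curve $X_{M_i}$ has finitely many rational points. Its rational points are known explicitly from the computations behind the tables, e.g.\ by Chabauty or because $X_{M_i}(\Q)$ is finite with known points. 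So $\mathcal X_{K_i^\chi}$ is twist isolated, and this shows $j_0$ is twist isolated.

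\emph{No other $j$-invariant is twist isolated.} Let $j_0$ be non-CM, and let $x\in\mathcal X_G(\Q)$ with $j(x)=j_0$ and $\mathcal X_G$ twist isolated. Choose any $E/\Q$ with $j(E)=j_0$. Up to a quadratic twist of $E$, we have $G_E\le G$ after conjugation. By Theorem~\ref{thm 1+2}, $G_E$ is conjugate to some $K_i^\chi$ coming from a point of $X_{M_i}(\Q)$. Suppose $i\le 138$, so that $X_{M_i}(\Q)$ is infinite. I want to show that $\mathcal X_G$ is then not twist isolated, which is a contradiction.

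The key point is a monotonicity statement. From $G_E\le G$ we get $G_E^{\mathrm{ag}}\le G^{\mathrm{ag}}$, since the agreeable closure is monotone. It follows that the twist family of $G$ is dominated by that of $K_i$: the map $X_{M_i}\to X_{G^{\mathrm{ag}}}$ carries the infinitely many points of $X_{M_i}(\Q)$ into $X_{G^{\mathrm{ag}}}(\Q)$. Hence $X_{G^{\mathrm{ag}}}(\Q)$ is infinite, and $\mathcal X_G$ is twist parameterized rather than isolated. So $i\ge139$, and $j_0\in j(X_{M_i}(\Q))$ for some $i\in\{139,\dots,160\}$. It then remains to compute that the union over these $i$ of the non-CM, non-cuspidal $j$-invariants of $X_{M_i}(\Q)$ is exactly the $41$ listed values.

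\emph{Main obstacle.} I expect the hardest step to be the monotonicity argument: checking that $G_E\le G$ really gives a morphism $\mathcal X_{K_i^{\mathrm{ag}}}\to\mathcal X_{G^{\mathrm{ag}}}$, compatible with the conjugations and with $\pm I$ issues, using the precise definition of twist isolated from Section~\ref{sec:twistisolatedparam}. I would first try to prove a general lemma: if $H\le G$ are open, then $H^{\mathrm{ag}}\le G^{\mathrm{ag}}$, and the natural map $X_{H^{\mathrm{ag}}}\to X_{G^{\mathrm{ag}}}$ is defined over $\Q$. The remaining steps are finite computations already encoded in the tables.
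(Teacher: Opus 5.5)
Both halves rely on deciding twist isolatedness from whether some agreeable curve has finitely or infinitely many rational points. That inference fails in both directions.

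\textbf{Second half (no other $j$-invariant is twist isolated).} Your monotonicity step confuses $M_i$ with the agreeable closure of $G_E$. Theorem~\ref{thm 1+2} says $G_E$ is conjugate to $K_i^\chi\le M_i$ with $M_i/K_i^\chi$ abelian, which only gives $G_E^{\mathrm{ag}}\le M_i$. For $i\le 138$ the $M_i$ are bases of maximal families and are in general strictly larger than $G_E^{\mathrm{ag}}$.

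Your lemma $G_E\le G\Rightarrow G_E^{\mathrm{ag}}\le G^{\mathrm{ag}}$ is correct, but it produces a map $X_{G_E^{\mathrm{ag}}}\to X_{G^{\mathrm{ag}}}$, not $X_{M_i}\to X_{G^{\mathrm{ag}}}$. Here $M_i$ and $G^{\mathrm{ag}}$ are just two overgroups of $G_E^{\mathrm{ag}}$, so $\#X_{M_i}(\Q)=\infty$ tells you nothing about $X_{G^{\mathrm{ag}}}(\Q)$. Moreover $X_{G_E^{\mathrm{ag}}}(\Q)$ can itself be finite. This happens for $j=-631595585199146625/218340105584896$ in Section~\ref{sec:twist-isolated_j}, which is exceptional yet twist parameterized.

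The principle you actually need is that every $\mathcal X_G$ with $G\ge G_E$ is twist parameterized once $\mathcal X_{G_E}$ is. The opening paragraph of Section~\ref{sec:algorithm} says exactly this can fail, which is why the paper tests \emph{every} $G$ with $G_{j_0}^{\mathrm{ag}}\le G\le\GL_2(\Zhat)$.

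The correct reduction does not use Theorem~\ref{thm 1+2} at all. Suppose $\mathcal X_G$ is twist isolated and has a rational point over the non-CM $j_0$. Then $\#X_{G^{\mathrm{ag}}}(\Q)<\infty$ by Proposition~\ref{prop:tp_group_condition}(5). So $j_0$ is exceptional and, under Conjecture~\ref{conj: GIC}, lies in the $77$-element set $J_{\rm exc}$.

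\textbf{First half (every listed $j$-invariant is twist isolated).} This has the mirror-image gap. The fact that $\#X_{M_i}(\Q)<\infty$ does not make $\mathcal X_{K_i^\chi}$ (or $X_{M_i}$) twist isolated. It can still be an abelian cover of a strictly larger agreeable $X_H$ with $\#X_H(\Q)=\infty$, and the same $j$-invariant is an instance of this. What you must verify is that no $H\in A_\infty$ contains $M_i=G_{j_0}^{\mathrm{ag}}$ up to conjugacy with $[H,H]=[M_i,M_i]$.

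There is also a circularity problem. Rows $139$--$160$ of the tables are the output of exactly the computation in Section~\ref{sec:twist-isolated_j} that proves this theorem; the proof of Theorem~\ref{thm 1+2} cites it. So you cannot take those rows as input.

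\textbf{The paper's route.} The paper's proof has two steps. First, reduce to $J_{\rm exc}$ as above. Second, for each of the $77$ values, run the finite test of Section~\ref{sec:algorithm}, which is justified by Proposition~\ref{prop:tp_group_condition}(2),(8). This compares every $G$ with $G_{j_0}^{\mathrm{ag}}\le G\le\GL_2(\Zhat)$ against the list $A_\infty$. Exactly $41$ of the $77$ admit a $G$ with no suitable $H\in A_\infty$, that is, a twist-isolated $\mathcal X_G$.
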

In other words, there are $41$ elliptic curves over $\Q$ (up to quadratic twist) whose Galois representations $\rho_E$ do not vary algebraically in an infinite family. 
These $41$ twist-isolated $j$-invariants correspond to rational points on the $22$ modular curves $X_{M_i}$ with $\#X_{M_i}(\Q) < \infty$.

\subsection{Geometric characterization}

Our second main result is a formulation and conditional proof of question (\ref{programb:geom}), which asks for a geometric characterization of all rational points on all modular curves $X_G$, in the spirit of Ogg (see for example \cite{BalakrishnanMazur}).
The starting point of Ogg's philosophy is Mazur's theorem that the only rational points on the curves $X_1(N)$ are those that can be generated by the cusps using the geometry of the modular curve. Indeed, if $X_1(N)$ has non-cuspidal rational points, then it is a plane conic and all rational points are obtained by intersecting the conic with rational lines through one of the rational cusps. 

The pattern seemed to break down with Mazur and Kenku's classification of the rational points on $X_0(N)$, because some of these curves have positive genus and non-special rational points.  However Ogg  pointed out that each of these rational points is also explained by geometry \cite{OggDiophantine}.  For example, the two non-special rational points on  $X_0(37)$ are obtained from the two rational cusps by  applying the hyperelliptic involution. In Ogg's words \cite{Ogg74}, ``we are certainly interested in knowing when this sort of thing is going on, and in putting a stop to it if at all possible.''

The parameterization of non-special rational points on the curves $X_{G_E}$ in Theorem \ref{thm 1+2} implies a similar parameterization for non-special rational points on {\it all} modular curves, since the former are cofinal among the latter. It is therefore natural to ask whether all rational points on all modular curves are explained by geometry, as Ogg would have liked.   To formalize this notion, we declare
\begin{enumerate}
    \item All special points are ``explained'',
\end{enumerate} 
and then specify four geometric operations that propagate explained rational points from one modular curve to another. Roughly speaking, the four basic operations are: \begin{itemize}
    \item[(2)] ``push-forward'': the image of a rational explained point under a morphism is explained,
    \item[(3)] ``abelian lifts'': any lift of a rational explained point along an abelian cover is explained, 
    \item[(4)] ``collinearity'': if all but one rational point $y$ of a rational hyperplane section of $X_H$ is explained, then $y$ is explained as well, and
    \item[(5)] ``fiber splicing'': given two modular parameterizations $f_1 \colon X_H \to E_1$ and $f_2 \colon X_H \to E_2$ of elliptic curves $E_1,E_2$ over $\Q$, and two explained points $x_1\in E_1(\Q)$ and $x_2 \in E_2(\Q)$, if the intersection of the fibers $f_1^{-1}(x_1) \cap f_2^{-1}(x_2)$ is a single point $y$, then $y$ is explained.
\end{itemize}

We refer to Section \ref{sec: geometric points} for the precise definition of rational points $y \in X_H(\Q)$ {\it explained by geometry}, as $X_H$ varies over all modular curves. (In particular, we impose additional assumptions on the data to assure that our ``explanations'' are truly geometric and avoid certain tautologies.)

We can now state our second main result.
\begin{ithm}\label{thm: B}
Assuming Conjecture \ref{conj: GIC}, all rational points on all modular curves are explained by geometry. More precisely, for any finite index subgroup $H \leq \GL_2(\widehat{\Z})$, and for any $y \in X_H(\Q)$, $y$ is explained by geometry in the sense of Section $\ref{sec: geometric points}$.  
\end{ithm}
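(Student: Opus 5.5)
The proof reduces, via Theorem~\ref{thm 1+2}, to finitely many explicit verifications. Let $y \in X_H(\Q)$. If $y$ is special it is explained by rule (1), so assume $y$ is non-special and let $E$ be an elliptic curve over $\Q$ with $j(E) = j(y)$. Since $y$ is a non-special rational point of $X_H$, we may choose $E$ (up to quadratic twist) so that $\pm G_E$ is conjugate into $\pm H$. The point $y$ is then the image of the canonical rational point $y_E \in X_{G_E}(\Q)$ under the morphism $X_{G_E} \to X_H$. By push-forward (rule (2)), it suffices to show that $y_E$ is explained on $X_{G_E}$. This is the cofinality remark made before the theorem.

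Next apply Theorem~\ref{thm 1+2}. There is an index $i$ and a point $x \in X_{M_i}(\Q)$ such that $G_E$ is conjugate to $K_i^\chi$, and $y_E$ lies in the fibre over $x$ of the abelian cover $X_{K_i}^\chi = X_{K_i^\chi} \to X_{M_i}$. This cover is a twist of $\pi_i$, so it is again a $\overline{T}_i$-cover, hence abelian. By the abelian-lift rule (3), $y_E$ is explained as soon as $x$ is explained on $X_{M_i}$. The problem therefore reduces to showing that every rational point on each of the $160$ curves $X_{M_i}$ is explained.

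For the curves $X_{M_i}$ of genus $0$ this should follow from collinearity (rule (4)). One would use a rational special point, or an explained rational point pushed down from elsewhere, on the conic model, together with lines through it. If the curve has a rational point, an odd-degree explained rational divisor should allow anticanonical or other rational hyperplane sections to sweep out every rational point. For the genus-$1$ curves with infinitely many points, I would use collinearity on a plane cubic model: chords and tangents through explained points generate the Mordell--Weil group. If the rank is positive, however, a generator must itself be explained. For this I would try fiber splicing (rule (5)), or push-forward from a cover on which the generator is special or already explained. Among the 138 infinite cases there may also be $X_{M_i}$ of higher genus admitting a map to a genus $\le 1$ curve; these would be handled through that map together with rule (5).

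The main obstacle is the 22 twist-isolated curves with finitely many points, which carry the 41 sporadic $j$-invariants of Theorem~\ref{thm: 41 twist isolated points}. For each such point I would search computationally for an explanation: a hyperelliptic or other involution (modular, Atkin--Lehner, or toric) carrying the point to a cusp or CM point, as in Ogg's $X_0(37)$ example; a rational line section whose other points are special; or a fiber-splicing configuration through two modular elliptic quotients. Executing this case by case, with the definitions of Section~\ref{sec: geometric points} set up so that such rules are admissible, is where I expect the real difficulty to lie.
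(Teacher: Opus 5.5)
Your reduction is the same as the paper's. You push forward (rule (2)) from $X_{G_E}$ to $X_H$, then use Theorem~\ref{thm 1+2} and abelian lift (rule (3)) to reduce to showing that every rational point on the $160$ base curves $X_{M_i}$ is explained. The paper packages this as Theorem~\ref{thm: twist parameterized explained} plus Lemma~\ref{lem: reduction to agreeable twist isolated}, with the same content.

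\textbf{The gap.} Everything after the reduction is only a search plan, and the theorem is precisely the claim that this search succeeds. Nothing in the axioms forces a non-special point on a curve with finitely many rational points to be explained, so the case-by-case verification \emph{is} the proof. Several of the tools you propose also fail as stated:
\begin{itemize}
\item Rule (5) requires $g(X_H)>1$, so it cannot supply an explained generator on a rank-one genus-$1$ curve.
\item In genus $0$ the anticanonical divisor has degree $2$. Collinearity therefore needs an explained point of degree exactly $1$, and an odd-degree explained divisor is useless.
\item By Faltings there are no higher-genus curves among the $138$ infinite cases.
\item Push-forward from a cover only relocates the problem. A cover of a curve with no rational special point has none either, so you must still explain a non-special point upstairs by some other rule.
\end{itemize}

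\textbf{The missing ingredient.} The tool absent from your list, and the one the paper uses most, is abelian lift from a lower-genus \emph{quotient}.
\begin{itemize}
\item The five genus-$0$ base curves without rational special points (e.g.\ 8.12.0.h.1, 16.16.0.b.1) are abelian covers of genus-$0$ curves that have such points.
\item Fourteen of the rank-one genus-$1$ base curves are double covers of pointed genus-$0$ curves. For the other eight, one checks that the Mordell--Weil group is generated by special points.
\item Among the twist-isolated curves, 60.40.2.a.1, 44.24.2.a.1, 120.48.3.c.1, 168.64.3.a.1, 28.64.3.b.1, 100.100.4.d.1 and both genus-$7$ curves are handled by lifting from lower-genus quotients.
\item On 12.48.1.q.1 the non-special points lie outside the subgroup generated by the cusps, so collinearity fails; they are explained by the abelian lift from 12.24.1.o.1.
\item For 12.24.1.h.1 and 12.24.1.g.1, one pushes forward along an isomorphism from a different modular curve with two rational special points.
\end{itemize}

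\textbf{The hardest case.} The point $y_2\in X_{S_4}(13)(\Q)$ is out of reach of all three mechanisms you list, applied on $X_{S_4}(13)$ itself. $\Aut(X_{S_4}(13))$ is trivial, there are no elliptic quotients to splice, and collinearity is exhausted after $y_0$ and $y_1$. The paper instead does the following:
\begin{itemize}
\item It lifts $y_2$ to the genus-$10$ twisted double cover $X_{S_4}(13)\times_{X(1)}X^{-3}_{\mathrm{ns}}(2)$.
\item It explains the lifts there by fiber splicing. One elliptic quotient is one where the lifts collide with the lifts of the CM point $y_3$. The other is a rank-one modular curve, explained via Corollary~\ref{cor: X_H infinite is explained}.
\item It then pushes back down to $X_{S_4}(13)$.
\end{itemize}
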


This realizes the vision of Mazur and Ogg that rational points on modular curves exist if and only if there is a good geometric reason for them to exist. The result is conditional, but we may formulate an unconditional result by replacing ``all rational points'' with ``all known rational points''.

The proof of Theorem \ref{thm: B} proceeds in several steps. First, we use the explicit classification in Theorem \ref{thm 1+2} to prove that all rational points on modular curves with infinitely many rational points are explained. Axiom (3) then shows that any unexplained rational point comes from an unexplained twist-isolated rational point. Thus, it is enough to show that all rational points having one of the $41$ twist isolated $j$-invariants of Theorem \ref{thm: 41 twist isolated points} are explained. We then reduce to checking that the rational points on 22 modular curves $X_{M_i}$ are explained, which we check ``by hand''.

To give a flavor of how these geometric explanations work in practice, we summarize our explanation of the rational points on the genus 3 curve $X_{S_4}(13)$ with label $\href{https://beta.lmfdb.org/ModularCurve/Q/13.91.3.a.1/}{13.91.3.a.1}$. 

\begin{iex}
{\em 
    $X = X_{S_4}(13)$ embeds canonically in $\PP^2$ as a smooth plane quartic curve. 
    It has three non-special rational points $y_0,y_1,y_2$.  Since $X$ has no distinguished maps to lower genus curves  and $\Aut(X)$ is trivial, it is not clear where $y_0,y_1,y_2$ ``come from''.  Searching among cubic CM points, we find  three conjugate  points with CM by $\Z[2i]$ that are $\Q$-collinear. 
    By Bezout's theorem, the fourth point on $X$ in their span is necessarily rational, which explains $y_0$. 
    We similarly check that $y_0$ is $\Q$-collinear with a pair of conjugate points with CM by $\Z[i]$. The fourth point in their span is rational, explaining $y_1$.  At this point, the collinearity relations on $X$ run dry. The point $y_2$ is explained by fiber splicing on a genus 10 double cover  $X_H \to X$; see Section \ref{subsec:genus3} for details. }
\end{iex}

\subsection{Discussion}
To summarize, we have presented Mazur's Program B as the union of three different points of view. We explained how recent work of Zywina can be thought of as formulating and conditionally proving a precise version of the parameterization question (\ref{programb:param}) over $\Q$. We then presented our refined version of Zywina's solution in Theorem \ref{thm 1+2}. Finally, we formulated and conditionally proved a precise version of the geometric characterization question (\ref{programb:geom}) over $\Q$ in Theorem \ref{thm: B}, whose proof relies on Theorem \ref{thm 1+2}. 

It is natural to ask about the implications of these results to the algorithmic question (\ref{programb:alg}). Our classification results for all modular curves do not render question (\ref{programb:alg}) moot. For example,  consider the genus 2 modular curve $X = X_{K_{71}}$ with label \href{https://beta.lmfdb.org/ModularCurve/Q/18.54.2.a.1/}{18.54.2.a.1} and Weierstrass model
    \[y^2 =  6x^6 - 9x^5 - 18x^4 + 33x^3 + 9x^2 - 36x - 12 =: f(x)\]
    We have $T_{71} = \Aut(X) = \Z/2\Z$, so $X$ has the property that all of its quadratic twists 
    \[X_d \colon dy^2 = f(x)\] 
    are also modular curves. 
    Our results do not answer the algorithmic question: given $d$, is there a non-special rational point in $X_d(\Q)$? Instead, they say that the set of such $d$ is the projection of $\{f(t) \colon t \in \Q\}$ to $\Q^\times/\Q^{\times 2}$ since $t \in \Q$ gives rise to the rational points $(t,\pm 1)$ on $X_{f(t)}$. In other words, we parameterize the rational points on the curves $X_d$ by $\PP^1_t$ in one fell swoop, without specifying for each individual $d$ whether $X_d(\Q)$ has a rational point.\footnote{On one hand, this is similar to how we parameterize elliptic curves $E$ over $\Q$ with a subgroup of order $N \leq 10$, by specifying that $j(E)$ is of the form $g_N(t)$, for some explicit rational function $g_N(x)$ and some $t \in \Q$. On the other hand, verifying whether $j(E)$ is of this form is much easier than verifying whether a given $X_d$ has a rational point.} Thus, our results do not resolve question (\ref{programb:alg}), even conditionally.  However, we do expect that we have given the best possible answer to question (\ref{programb:param}), since it is hard to fathom a better way to describe the projection of $\{f(t) \colon t \in \Q\}$ to $\Q^\times/\Q^{\times 2}$. It seems that the algorithmic question (restricted to finitely many specific twist families) is an indispensable aspect of any ``complete'' solution to Mazur's Program~B.  

Of course, nowadays question (\ref{programb:alg}) is a special case of one of the fundamental open questions in arithmetic geometry: find an efficient algorithm to effectively determine the set of rational points on a curve of genus $g > 1$ over a number field. Since modular curves have rich structure, one might hope to find an algorithm tailored to them (and even if a general algorithm would be discovered, it would be interesting to find a more efficient algorithm for modular curves). 
Many of the modern developments towards an ``effective Faltings theorem'' come out of the study of rational points on modular curves; see e.g. \cite{Balakrishnan}. 

\subsection{Relation to recent work} 
The basis for this paper, especially Theorem \ref{thm 1+2}, is the recent work of Zywina on the possible adelic images of elliptic curves over $\Q$ \cite{zywina_possible_indices,zywina_explicit_images, zywina_open_image_computations}. We note that Zywina's proof of Conjecture \ref{conj: mazur program B revised} assuming Conjecture \ref{conj: GIC} allowed him to construct in \cite{zywina_explicit_images} an algorithm that computes $G_E$ for a given non-CM elliptic curve $E/\Q$. The algorithm first verifies that it does not give a counterexample to Conjecture \ref{conj: GIC}.  Then, it determines analogues of $i$, $x$, and $\chi$ in the notation as in Conjecture \ref{conj: mazur program B revised} which gives $G_E$ up to conjugacy. 

There has been considerable work in recent years on mod $\ell$ images \cite{zywina_possible_images, sutherland, BP11, BPR13, Balakrishnan, LeFournLemos, FurioLombardo251} and $\ell$-adic images of Galois \cite{RouseDZB, RSZB22, FurioLombardo252, Baletal25}, all of which constitutes progress towards Mazur's Program B. Another relevant result is Rakvi's determination of all modular curves of genus 0 with rational points \cite{Rakvi24}.

One of the original motivations for this paper was to study how sporadic and isolated points on modular curves could be explained systematically. Sporadic and isolated points on $X_0(N)$ and $X_1(N)$ have received a lot of attention recently. 
In \cite{mazur77, KM88, DerickxNajman25}, it is shown that $X_1(N)$ and $X_1(M,N)$ have no non-cuspidal sporadic points of degree $1,2$ and $4$. Najman found degree 3 sporadic points on $X_1(21)$ \cite{najman16}, while van Hoeij found sporadic points of all degrees $5\leq d \leq  13$ (see \cite{vanHoeij} for the points, and \cite[Appendix A]{DerickxNajman25} for a proof that points of these degrees are indeed sporadic). In \cite[Appendix A]{DerickxNajman25} Derickx and Najman prove that $X_0(N)$ has sporadic points of all degrees $d\leq 2166$, and conjecture that $X_0(N)$ has sporadic points of all degrees. In \cite{BELOV} the authors prove that there are only finitely many rational $j$-invariants associated to isolated points on $X_1(N)$, assuming Serre's uniformity conjecture, while in \cite{BHKKLMNS25} the authors conjecturally find all such rational isolated $j$-invariants on $X_1(N)$. Bourdon and Ejder \cite{BourdonEjder25} unconditionally determined all such rational isolated $j$-invariants on $X_1(\ell^n)$ and $X_0(\ell^n)$. Terao \cite[Theorem 1]{Terao24} proved that an isolated non-CM point $x\in X_G(\overline \Q)$ with $j(x)\in \Q$ necessarily induces a $\textit{rational}$ isolated point on some (possibly different) modular curve. Our results show that there exist infinitely many rational isolated $j$-invariants, but they come in finitely many ``families".

One might consider extending these results to arbitrary number fields. However, significant obstacles appear immediately. Notably, the twists of modular curves are not necessarily modular curves themselves, essentially because the Kronecker--Weber theorem does not hold beyond $\mathbb{Q}$.

\subsection{Outline}
In Section \ref{sec: definitions} we fix our definitions and conventions. We introduce the key concepts of twist-parameterized and twist-isolated curves, points, and $j$-invariants in Section \ref{sec:twistisolatedparam}.

In Section \ref{sec: twist isolated points} we develop the theory of twist-parameterized points, and show that, assuming Serre's uniformity conjecture, there are finitely many twist-isolated rational points on all modular curves. We develop an effective algorithm (Section \ref{sec:algorithm}) that can check whether a given $j$-invariant is twist parameterized. We show that any twist-isolated $j$-invariant would have to be an  ``exceptional" in Zywina's terminology (Remark \ref{rem:exc_j}). Zywina found $77$ such exceptional points, and conjectured that this list is complete. We run our algorithm through this list and obtain that there are, assuming Zywina's conjecture, only $41$ twist-isolated $j$-invariants.

In Section \ref{sec: geometric points}, we define carefully what we mean by ``rational points explained by geometry''. In the next section we (conditionally) show that all rational points on modular curves are explained by geometry. 
In Section \ref{sec:nonserre}, we use Theorem \ref{thm 1+2} to show that non-Serre curves come in 20 families and we discuss their moduli interpretations.
In Section \ref{sec: super-sporadic} we show that most of the twist families $X_{K_i}^\chi$ produce \textit{super-sporadic points}, points that lift to sporadic points on any modular cover, answering a version of a question considered in \cite[Section 7]{BELOV}.

Many theorems and examples in this work are based on code which is available at 
\begin{center}
\url{https://github.com/nt-lib/twist-parametrized/blob/main/}.
\end{center}

\section*{Acknowledgements}
We are grateful to David Zywina whose work is the inspiration for this paper. Parts of this project began at the ICERM Algebraic Points on Curves workshop and we are thankful to ICERM and the organizers of the workshop.
We thank Jacob Mayle and Jeremy Rouse for providing extremely useful code to study maps of modular curves to elliptic curves and for their help with using and adapting the code. We also thank Eran Assaf, Abbey Bourdon, Barry Mazur, Bjorn Poonen, and Chris Xu for helpful comments. 

M.D. and F.N. were financed by the Croatian
Science Foundation under the project no. IP-2022-10-5008 and by the  project ``Implementation of cutting-edge research and its application as part of the Scientific Center of Excellence for Quantum and Complex Systems, and Representations of Lie Algebras'', PK.1.1.10.0004, European Union, European Regional Development Fund. 
S.H. was partially supported by the National Science Foundation under Award No.\ DMS-2501658.  A.S. was funded by the European Research
Council (ERC, CurveArithmetic, 101078157).

\section{Definitions, conventions and notation}\label{sec: definitions}

We will mostly follow the conventions and notation by Zywina in \cite{zywina_possible_images, zywina_possible_indices,zywina_explicit_images, zywina_open_image_computations}. For a group $G$, we denote its commutator subgroup by $[G,G]$. For a number field $K$, we denote its absolute Galois group by $\Gal_K:=\Gal(\overline K/K)$.

\begin{definition}\label{def:goursat_groups}
     For an integer $M$, let $\Z_M \colonequals \prod_{p\mid M} \Z_p$ and $\Z'_M \colonequals \prod_{p\nmid M} \Z_p$. If $G\leq \GL_2(\Zhat)$ is an open subgroup, then we define $G_M$  to be the image of $G$ under the canonical projection $\GL_2(\Zhat) \to \GL_2(\Z_M)$. 
\end{definition}

\begin{definition}[Level]
    The \textit{level} (or $\GL_2$-level) of an open subgroup $G$ of $\GL_2(\Zhat)$ is the smallest positive integer $N$ for which $G$ contains the kernel of the reduction modulo $N$ homomorphism $\pi_N \colon \GL_2(\Zhat)\rightarrow \GL_2(\torz{N})$. 
    The level of an open subgroup $G$ of $\GL_2(\Z_N)$ is defined as the level of $G \times \GL_2(\Z_N) \leq \GL_2(\Zhat)$. We define the level of open subgroups of $\SL_2(\Zhat)$ and $\SL_2(\Z_M)$ completely analogously. For an open subgroup $G\leq \GL_2(\Zhat)$, we define the $\SL_2$-level of $G$ to be the level of $G\cap \SL_2(\Zhat)$. For $G \leq \GL_2(\Zhat)$ we write $N(G)$ for the level of $G$.
\end{definition}

We briefly recall the definition of the modular curves $\mathcal{X}_G$ and $X_G$; see \cite[IV - 3]{DeligneRapoprt73} or \cite[Section 2]{RouseDZB} for more details.  For $N \geq 1$, let $\mathcal{Y}(N)$ be the moduli stack over $\Q$ of pairs $(E,\alpha)$ where $E$ is an elliptic curve and $\alpha$ is an isomorphism $(\Z/N\Z)^2 \simeq E[N]$. This is a 1-dimensional Deligne--Mumford $\Q$-stack, which
 is a scheme if $N \geq 3$. The group $\GL_2(\Z/N\Z)$ acts on $\mathcal{Y}(N)$ through the level structures $\alpha$. For $G \leq \GL(\torz{N})$, let $\mathcal{Y}_G$ be the stack quotient $[\mathcal{Y}(N)/G]$, which is in general only a Deligne--Mumford stack over $\Q$, and which parameterizes $G$-orbits of pairs $(E,\alpha)$. Let $Y_G$ be the coarse space of $\mathcal{Y}_G$, a smooth algebraic curve over $\Q$. (If $N \geq 3$, this is simply the scheme quotient $\mathcal{Y}(N)/G$ given by taking $G$-invariants of the coordinate ring.) 
The $K$-rational points of $\mathcal{Y}_G$ correspond to pairs $(E, \alpha)$ such that $\rho_{E,N}(\Gal_K)$ is conjugate to a subgroup of $G$. Finally, let $\mathcal{X}_G$ and $X_G$ be the smooth compactifications of $\mathcal{Y}_G$ and $Y_G$ respectively. For an open subgroup $G \leq \GL_2(\Zhat)$, we define $\mathcal X_{G}\colonequals \mathcal X_{\pi_{N(G)}(G)}$ and $X_G = X_{\pi_N(G)}$. 

The stack $\mathcal{X}_G$ is a finite cover of the moduli stack $\mathcal{X}(1)$ of generalized elliptic curves.  Similarly, the curve $X_G$ is a finite cover of the coarse moduli space $X(1)$ of generalized elliptic curves. The map $X_G \to X(1) \simeq \PP^1$ sends $(E,\alpha)$ to the $j$-invariant $j(E)$. 
\begin{remark}\label{rem: stacks}
    The main reason we use stacks throughout the paper is to maintain bijective correspondence with open subgroups $G\leq \GL_2(\Zhat)$. We always have $X_G \simeq X_{\pm G}$ (as schemes), but if $-I\notin G$ then $\mathcal X_G\not\simeq \mathcal X_{\pm G}$. To obtain a classification of all possible $G_E$, it is crucial to differentiate between the geometric objects corresponding to $\pm G$ and $G$.

    The exception to this are the results dealing with rational points on modular curves, where we may and often do work with coarse space,  since the rational points on $\mathcal X_G$, $\mathcal X_{\pm G}$, $X_G$ and $X_{\pm G}$ are all in bijection to each other.  
    Note also that if $-I \notin G$, then the open substack of $\mathcal{X}_G$ where $j \neq 0, 1728, \infty$ is a scheme 
    and is identified with its coarse space.
\end{remark}

\begin{remark}
    If $G\leq \GL_2(\Zhat)$ and $\det(G) \neq \Zhat^\times$, then $X_G$ is connected over $\Q$, but geometrically disconnected, and hence cannot have any rational points.
\end{remark}

For an elliptic curve $E/\Q$, we denote by $G_E$ its adelic image of Galois $G_E\colonequals \rho_E(\Gal_\Q).$ We view $G_E$ as a subgroup of $\GL_2(\widehat{\Z})$ well-defined up to conjugacy. Then $E$ gives a $\Q$-rational point on $X_{G_E}$. For a non-CM $j$-invariant $j_0\in \Q$, define $G_{j_0}\colonequals \pm G_{E}$, where $E$ is any elliptic curve with $j(E)=j_0.$ For a point $x\in X_G(\Q)$, we define $G_x\colonequals G_{j(x)}$. 

\begin{definition}[Modular maps]
We say the maps $\mathcal X_{H_1} \to \mathcal X_{H_2}$ and $X_{H_1} \to X_{H_2}$ are {\it modular} if they are induced by an inclusion $H_1 \leq H_2 \leq \GL_2(\widehat{\Z})$.   
\end{definition}

The modular map $\mathcal X_{H_1} \to \mathcal X_{H_2}$ is abelian if and only if $H_1$ is normal in $H_2$ and $H_2/H_1$ is abelian. The modular map $X_{H_1} \to X_{H_2}$ is abelian if and only if $\pm H_1$ is normal in $\pm H_2$ and $\pm H_2/{\pm{H_1}}$ is abelian.

\subsection{Agreeable groups}
The group-theoretic notion of agreeable closure plays an important role in Zywina's classification.

\begin{definition}[Agreeable groups]\cite[\S 4.1]{zywina_open_image_computations}
A subgroup $G$ of $\GL_2(\Zhat)$ is \textit{agreeable} if it is open, contains all the scalar matrices  in $\GL_2(\Zhat),$ and each prime dividing the $\GL_2$-level of $G$ also divides the $\SL_2$-level of $[G,G]$.
\end{definition}

\begin{definition}[Agreeable closure]\cite[\S 4.1]{zywina_open_image_computations}
\label{def:agreeableclosure}
Let $G \leq \GL_2(\Zhat)$ be an open subgroup and let $M$ be the $\SL_2$-level of $[G,G]$, then the  \textit{agreeable closure of $G$} is defined to be the subgroup $G^{\ag} \leq \GL_2(\Zhat)$ given by
$$G^{\ag} :=(\Z_M^\times G_M) \times \GL_2(\Z_M').$$
\end{definition}

\begin{lemma}\cite[Lemma 4.1]{zywina_open_image_computations} \label{lem:agreeable}
Let $G \leq \GL_2(\Zhat)$ be an open subgroup. Its agreeable closure $G^{\ag}$ satisfies the following properties:
\begin{enumerate}
    \item $G \leq G^{\ag}$,
    \item $G^{\ag}$ is agreeable,
    \item If $H\leq \GL_2(\Zhat)$ is agreeable and $G \leq H$, then $G^{\ag} \leq H$,
    \item $G$ is agreeable if and only if $G=G^{\ag}$,
    \item $[G,G]=[G^{\ag},G^{\ag}]$.
    \item If $M'$ is the $\SL_2$-level of $[G,G]$, then the $\GL_2$-level of $G^{\ag}$ divides $2\lcm(M',4)$.
\end{enumerate}
\end{lemma}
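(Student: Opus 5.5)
The plan is to prove the six properties directly from the definition $G^{\ag}=(\Z_M^\times G_M)\times \GL_2(\Z_M')$, where $M$ is the $\SL_2$-level of $[G,G]$, proving (5) early since (2) depends on it.

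\textbf{Property (1).} Since $G_M$ is the image of $G$ in $\GL_2(\Z_M)$, we have $G\subseteq G_M\times \GL_2(\Z'_M)\subseteq G^{\ag}$.

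\textbf{Property (5).} First note that $[G,G]\subseteq \SL_2(\Zhat)$ has $\SL_2$-level $M$, so $[G,G]\supseteq \ker(\SL_2(\Zhat)\to \SL_2(\Z/M\Z))$. In particular $[G,G]\supseteq \{1\}\times \SL_2(\Z'_M)$ (up to the harmless case where $M$ is divisible only by small primes). Hence $[G,G]=[G,G]_M\times \SL_2(\Z'_M)$.

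Now compute $[G^{\ag},G^{\ag}]$. Scalars are central, so $[G^{\ag},G^{\ag}]=[G_M,G_M]\times[\GL_2(\Z'_M),\GL_2(\Z'_M)]$. The second factor equals $\SL_2(\Z'_M)$, because $2,3\mid M$ whenever the $\SL_2$-level exceeds $1$ in the relevant cases. Concretely, one checks that the commutator subgroup of $\GL_2(\Z_p)$ is $\SL_2(\Z_p)$ for $p\ge 3$, and that the primes $2$ and $3$ automatically divide $M$. This holds because $[G,G]$ cannot contain $\SL_2(\Z_2)$ or $\SL_2(\Z_3)$ as a full factor: $\SL_2(\Z_2)$ and $\SL_2(\Z_3)$ are not perfect, and a commutator subgroup's projection would have to be contained in the commutator of $G$'s projection.

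Finally, $[G,G]_M=[G_M,G_M]$, since projection commutes with taking commutators and the closure is fine for open subgroups. Together these give (5).

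\textbf{Property (2).} $G^{\ag}$ is open and contains all scalars: at $M$ by construction, and at $\Z'_M$ because that factor is all of $\GL_2$. Its $\GL_2$-level is only supported on primes dividing $M$, and by (5) the $\SL_2$-level of $[G^{\ag},G^{\ag}]$ is $M$. So $G^{\ag}$ is agreeable.

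\textbf{Property (3).} Let $H\supseteq G$ be agreeable. Then $[H,H]\supseteq[G,G]$, so the $\SL_2$-level $M_H$ of $[H,H]$ divides $M$. Every prime dividing the level of $H$ divides $M_H$, and hence divides $M$. Therefore $H=H_M\times \GL_2(\Z'_M)$. Since $H$ contains the scalars and $H_M\supseteq G_M$, it follows that $H\supseteq \Z_M^\times G_M\times\GL_2(\Z'_M)=G^{\ag}$.

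\textbf{Property (4).} This follows from (1)–(3). If $G$ is agreeable, apply (3) with $H=G$ to get $G^{\ag}\le G$, and combine with (1). Conversely, if $G=G^{\ag}$ then $G$ is agreeable by (2).

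\textbf{Property (6).} This is the main obstacle. We need to bound the level of $\Z_M^\times G_M$ in terms of the level of $[G_M,G_M]$. The approach is to show that $\Z_M^\times G_M$ contains the kernel of reduction modulo $N=2\lcm(M,4)$.

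Take $g\equiv I \pmod N$. Write $g=\lambda s$ with $\lambda$ a scalar and $s\in\SL_2$. The scalar $\lambda$ can be chosen as a square root of $\det g$; this exists because $\det g\equiv 1 \pmod{2\lcm(M,4)}$, and that congruence is precisely what the factor of $2$ and the $4$ ensure at the prime $2$ via Hensel's lemma. Then $s\equiv \lambda^{-1} g \pmod M$, and after arranging $\lambda\equiv 1\pmod M$ we get $s\equiv I\pmod M$. Hence $s\in[G,G]\subseteq G$, and therefore $g\in \Z_M^\times G$.

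The delicate point is controlling the square-root congruence at $2$. This is exactly where the factor $2\lcm(M,4)$ rather than $M$ arises, and it needs a careful check.
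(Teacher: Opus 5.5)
The paper gives no proof of this lemma; it quotes it from Zywina. Your direct verification from the definition of $G^{\ag}$ is the natural route, and (1)--(4) are correct as written. Two steps need repair: one rests on a false claim, and the other is left unfinished.

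\textbf{Property (5).} The claim that $3$ always divides $M$ is false. For $G=\GL_2(\Zhat)$ one has $[G,G]=[\GL_2(\Z_2),\GL_2(\Z_2)]\times\prod_{p\neq 2}\SL_2(\Z_p)$, which has $\SL_2$-level $2$.

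Your reason is also the wrong criterion. Whether $\SL_2(\Z_p)$ is perfect is irrelevant; what matters is whether $[\GL_2(\Z_p),\GL_2(\Z_p)]=\SL_2(\Z_p)$. This equality holds for $p=3$ even though $\SL_2(\Z_3)$ is not perfect. Your argument applied at $3$ ``proves'' a false statement, so as written it does not establish $2\mid M$ either.

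What you actually need is the following.
\begin{itemize}
\item[(a)] For every odd $p$, $[\GL_2(\Z_p),\GL_2(\Z_p)]=\SL_2(\Z_p)$. With $d=\mathrm{diag}(-1,1)$,
\[d\left(\begin{smallmatrix}1&x\\0&1\end{smallmatrix}\right)d^{-1}\left(\begin{smallmatrix}1&x\\0&1\end{smallmatrix}\right)^{-1}=\left(\begin{smallmatrix}1&-2x\\0&1\end{smallmatrix}\right),\]
and $-2\in\Z_p^\times$. The same holds for lower unipotents, and elementary matrices generate $\SL_2(\Z_p)$.
\item[(b)] $2\mid M$. The projection of $[G,G]$ to $\SL_2(\Z_2)$ lies in $[\GL_2(\Z_2),\GL_2(\Z_2)]$. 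That group lies in the kernel of $\sgn\colon\GL_2(\Z_2)\to\GL_2(\F_2)\cong S_3\to\{\pm1\}$, so it is proper in $\SL_2(\Z_2)$. If $2\nmid M$, then $\ker(\SL_2(\Zhat)\to\SL_2(\Z/M\Z))\subseteq[G,G]$ would project onto $\SL_2(\Z_2)$, a contradiction.
\end{itemize}
With (a) and (b), $[\GL_2(\Z'_M),\GL_2(\Z'_M)]=\SL_2(\Z'_M)$, and the rest of your computation goes through. The hedge about ``small primes'' is unnecessary, since $\{1\}\times\SL_2(\Z'_M)\subseteq[G,G]$ holds by the definition of the level.

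\textbf{Property (6).} The approach is right, but the check you defer is the whole content. It must be done in $\GL_2(\Z_M)$. At primes not dividing $M$, $\det g$ need not be a square, but $G^{\ag}$ is full there. Since $2\mid M$, the kernel of reduction modulo $N=2\lcm(M,4)$ is $\{g\in\GL_2(\Z_M): g\equiv I\bmod N\}\times\GL_2(\Z'_M)$.

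The square roots are found prime by prime.
\begin{itemize}
\item For odd $p\mid M$, Hensel's lemma gives $\lambda_p$ with $\lambda_p^2=\det g$ and $\lambda_p\equiv 1\bmod p^{v_p(M)}$.
\item At $p=2$, put $a=v_2(M)\geq 1$. Then $\det g\equiv 1\bmod 2^{\max(a,2)+1}$, in particular mod $8$, so it is a square. Take the root $\lambda_2\equiv 1\bmod 4$. If $\lambda_2=1+2^ku$ with $k\geq 2$ and $u\in\Z_2^\times$, then $\lambda_2^2-1=2^{k+1}u(1+2^{k-1}u)$ has valuation exactly $k+1$. Hence $k\geq\max(a,2)\geq a$.
\end{itemize}

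So $\lambda\equiv 1\bmod M$, and $s=\lambda^{-1}g\in\SL_2(\Z_M)$ satisfies $s\equiv I\bmod M$. Therefore $(s,I)\in\ker(\SL_2(\Zhat)\to\SL_2(\Z/M\Z))\subseteq[G,G]\subseteq G$, which gives $s\in G_M$ and $g\in\Z_M^\times G_M$.

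This also shows what each factor in $2\lcm(M,4)$ does. The $4$ makes $\det g$ a $2$-adic square, and the extra $2$ pays for the power of $2$ lost when extracting the square root.
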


\begin{lemma}
Let $H \leq G^{\ag}$ such that $\SL_2(\Zhat) \cap G= \SL_2(\Zhat) \cap H$. Then $G^{\ag}/H$ is an abelian group.
\end{lemma}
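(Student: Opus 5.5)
The plan is to show that the commutator subgroup $[G^{\ag},G^{\ag}]$ is contained in $H$. Once that is established, both normality of $H$ in $G^{\ag}$ and commutativity of the quotient follow by a standard argument.

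\textbf{Step 1: the commutator subgroup of $G^{\ag}$ lies in $G\cap\SL_2(\Zhat)$.} By Lemma~\ref{lem:agreeable}(5), $[G^{\ag},G^{\ag}]=[G,G]$. Every commutator $ghg^{-1}h^{-1}$ has determinant $1$, so $[G,G]\leq G\cap \SL_2(\Zhat)$. If $[G,G]$ is taken to be the closed subgroup generated by commutators, this still holds, because $G\cap\SL_2(\Zhat)$ is closed ($G$ is open, hence closed, and $\SL_2(\Zhat)$ is closed).

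\textbf{Step 2: pass to $H$.} By hypothesis $G\cap \SL_2(\Zhat)=H\cap\SL_2(\Zhat)\leq H$. Combining with Step~1 gives $[G^{\ag},G^{\ag}]\leq H$.

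\textbf{Step 3: conclude.} Let $x\in G^{\ag}$ and $h\in H$. Then
\[
xhx^{-1}=(xhx^{-1}h^{-1})\,h\in [G^{\ag},G^{\ag}]\,H=H,
\]
so $H$ is normal in $G^{\ag}$. The quotient map $G^{\ag}\to G^{\ag}/H$ kills every commutator, so $G^{\ag}/H$ is abelian.

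The only point needing a little care is Step~1 in the profinite setting. One has to check that the paper's convention for $[G,G]$, namely the closure of the abstract commutator subgroup as in Zywina's work, still lands inside $\SL_2(\Zhat)$. This is immediate from continuity of $\det$, so I do not expect a real obstacle.
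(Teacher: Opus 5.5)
Your proof is correct and follows the same route as the paper: the chain $[G^{\ag},G^{\ag}]=[G,G]\leq \SL_2(\Zhat)\cap G=\SL_2(\Zhat)\cap H\leq H$, followed by passing to the quotient. Your explicit checks of normality and of the closure convention for $[G,G]$ simply spell out what the paper leaves implicit in writing $G^{\ag}/H\simeq (G^{\ag}/[G^{\ag},G^{\ag}])/(H/[G^{\ag},G^{\ag}])$.
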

\begin{proof} Note 
$$[G^{\ag},G^{\ag}]=[G,G]\leq \SL_2(\Zhat) \cap G= \SL_2(\Zhat) \cap H\leq H.$$
Hence $G^{\ag}/H$ is an abelian group, since it is isomorphic to $(G^{\ag}/[G^{\ag},G^{\ag}])/(H/[G^{\ag},G^{\ag}])$.
\end{proof}

\begin{proposition}\cite[Lemma 1.7 and \S 1.4]{zywina_explicit_images}
\label{prop:com_im}
    Let $E$ be an elliptic curve over $\Q$ and $G_E$ its adelic image, then $[G_E^{\ag},G_E^{\ag}]=[G_E,G_E]=G_E \cap \SL_2(\Zhat)$.
\end{proposition}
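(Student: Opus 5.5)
The first equality is Lemma~\ref{lem:agreeable}(5) applied to $G=G_E$, so the plan reduces to proving $[G_E,G_E]=G_E\cap\SL_2(\Zhat)$. I will prove the two inclusions separately, and the content is entirely in the reverse one.

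The inclusion $[G_E,G_E]\subseteq G_E\cap\SL_2(\Zhat)$ is immediate, since $\det$ is a homomorphism to the abelian group $\Zhat^\times$ and so kills commutators.

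For the reverse inclusion I will use Galois theory. Let $\Q(E_{\mathrm{tors}})$ be the field generated by all torsion points, so that $\rho_E$ identifies $\Gal(\Q(E_{\mathrm{tors}})/\Q)$ with $G_E$. Composing with $\det$ gives the cyclotomic character, so that:
\begin{itemize}
\item $G_E\cap\SL_2(\Zhat)$ corresponds to $\Gal(\Q(E_{\mathrm{tors}})/\Q^{\cyc})$, because the Weil pairing gives $\Q^{\cyc}\subseteq \Q(E_{\mathrm{tors}})$;
\item the closed subgroup $[G_E,G_E]$ (closure of the commutator subgroup, as is standard for profinite groups) corresponds to $\Gal(\Q(E_{\mathrm{tors}})/F)$, where $F$ is the maximal abelian subextension of $\Q(E_{\mathrm{tors}})/\Q$.
\end{itemize}

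By Kronecker--Weber, every abelian extension of $\Q$ lies in $\Q^{\cyc}$. Hence $F\subseteq\Q^{\cyc}$. Conversely $\Q^{\cyc}\subseteq \Q(E_{\mathrm{tors}})$ is abelian over $\Q$, so $F=\Q^{\cyc}$. Comparing the corresponding Galois groups gives $[G_E,G_E]=G_E\cap\SL_2(\Zhat)$.

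There is essentially no obstacle. The only points needing care are two facts about infinite Galois theory: that the maximal abelian quotient of the profinite group $G_E$ is $G_E$ modulo the closed commutator subgroup, and that this quotient matches the maximal abelian subextension. It is also worth checking that the convention in Lemma~\ref{lem:agreeable}(5) uses the same (closed) commutator subgroup; since $G_E$ is open for non-CM $E$, Zywina's conventions make this consistent. The argument itself does not use the non-CM hypothesis, so it applies to all $E$ once commutators are taken to be closed.
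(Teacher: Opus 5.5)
Your argument is correct and is essentially the one behind this proposition. The paper gives no proof of its own and only cites Zywina, where the first equality is the agreeable-closure property (Lemma~\ref{lem:agreeable}(5)) and the second follows from Kronecker--Weber ($\Q^{\mathrm{ab}}=\Q^{\cyc}$) together with $\det\circ\rho_E=\chi_{\cyc}$, exactly as you do. Your closing remark about CM curves applies only to the second equality, since $G_E^{\ag}$ is defined only when $G_E$ is open, i.e.\ for non-CM $E$.
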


\subsection{Toric operators and agreeable modular curves}

We translate the notions above to modular curves. For an open subgroup $G \leq \GL_2(\Zhat)$ and integer  $M \geq 1$, define $G_M \colonequals \pi_M(G) \leq \GL_2(\Z/M\Z)$.
 If $H \leq G$ is normal of level $M$, then there is a strict action, in the sense of \cite{Romagny}, of $G/H$ on the stack $\mathcal{X}_H = [\mathcal{X}(M)/H_M]$ and the quotient $\mathcal{X}_H/(G/H)$ is isomorphic to $\mathcal{X}_G$ \cite[Rem.\ 2.4]{Romagny}.  We sometimes write $\Aut(\mathcal{X}_H/\mathcal{X}_G)$ for  $G/H$ in this situation.

\begin{definition} The group of {\it toric operators} is the abelian group $\Aut(\mathcal{X}_G/\mathcal{X}_{G^{\mathrm{ag}}}) \simeq G^\mathrm{ag}/G$. 
\end{definition}
 For completeness, we spell out the moduli interpretation of this group.  
Let $N$ be the level of $G$. The scalar matrices in $\GL_2(\Z/N\Z)$ are central, hence induce automorphisms of $\mathcal{X}_G = \mathcal{X}(N)/G_N$. This defines an action of $(\Z/N\Z)^\times$ on $\mathcal{X}_G$, which is trivial if and only $\Zhat^\times \leq G$.
The quotient $G \Zhat^\times/G$ acts faithfully on $\mathcal{X}_G$, and we call these the \emph{diamond operators}. 
\begin{example}
{   For $\mathcal{X}_G = \mathcal{X}_1(N)$, the diamond operators are $\Aut(\mathcal{X}_1(N)/\mathcal{X}_0(N)) \simeq (\Z/N\Z)^ \times$, consistent with the classical nomenclature.}
\end{example}

For every $m\mid N$ and $(m,N/m) = 1$, the forgetful map $\mathcal{X}_G \to \mathcal{X}_{G_m}$ sends a $G$-level structure $(E,\alpha \colon (\Z/N\Z)^2 \simeq E[N])$, to the pair  $(E,\alpha')$ where $\alpha'$ is the restriction of $\alpha$ to $(\Z/m\Z)^2$. 

Now let $M$ be the level of $[G,G] \leq \SL_2(\Zhat)$ and let $m = (N,M^\infty)$, i.e.\ $m$ is the largest divisor of $N$ divisible only by primes dividing $M$. Then $G$ is normal in $G_m$ and $G_m/G$ is abelian, since $\GL_2(\Zhat)/\SL_2(\Zhat)$ is abelian. We call $\Aut(\mathcal{X}_G/\mathcal{X}_{G_m}) \simeq G_m/G$ 
the group of \emph{similitude operators}. Note that the similitude operators commute with the diamond operators. Let $T_G$ be the group generated by the diamond and similitude operators, viewed as a finite group acting on $\mathcal{X}_G$.

\begin{lemma}\label{lem:toric}
$T_G$ is the group of toric operators
$\Aut(\mathcal{X}_G/\mathcal{X}_{G^{\mathrm{ag}}})$.       
\end{lemma}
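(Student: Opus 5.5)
We need to show that the subgroup of $G^{\ag}/G$ generated by the diamond operators $G\Zhat^\times/G$ and the similitude operators $G_m/G$ is all of $G^{\ag}/G$. Here $G_m$ should be read as the group $G_m\times\GL_2(\Z'_m)$, i.e.\ the preimage of the projection of $G$ to $\GL_2(\Z_m)$. Both sets of operators lie in $\Aut(\mathcal X_G/\mathcal X_{G^{\ag}})$ once we check $\Zhat^\times\leq G^{\ag}$ and $G_m\leq G^{\ag}$. This reduces the lemma to the group identity
\[
G^{\ag} \;=\; \Zhat^\times\cdot\bigl(G_m\times \GL_2(\Z'_m)\bigr),
\]
with both factors normalizing $G$. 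Normality is clear: scalars are central, and $[G_m,G_m]=[G,G]\le G$. For the latter, note that commutators of the product group only see the $\Z_m$-component together with the $\SL_2$-part coming from $\Z'_m$. So we have to be careful: $G$ need only contain $\SL_2$ of the $\Z'_m$-part to the extent forced by $[G,G]$ having level $M$. This is exactly the statement that $G_m/G$ is abelian, which the paper asserts just before the lemma, and I will take it as given.

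The main computation compares $m=(N,M^\infty)$ with $M$. By Definition \ref{def:agreeableclosure}, $G^{\ag}=(\Z_M^\times G_M)\times\GL_2(\Z'_M)$. The primes dividing $m$ are exactly the primes dividing both $N$ and $M$.

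For a prime $p\mid M$ with $p\nmid N$, the group $G$ already has full $\GL_2(\Z_p)$-component and is a product there. For a prime $p\mid N$ with $p\nmid M$, the $\Z'_m$ factor is replaced by all of $\GL_2$ in both descriptions. Hence, up to reordering primes, $\Z_M^\times G_M\times \GL_2(\Z'_M)$ coincides with $\Z_m^\times G_m\times\GL_2(\Z'_m)$.

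The remaining task is to show that $\Z_m^\times G_m\times\GL_2(\Z'_m)$ equals $\Zhat^\times\cdot(G_m\times\GL_2(\Z'_m))$. This holds because scalars in $\Z_m'^\times$ are already absorbed by $\GL_2(\Z'_m)$, and scalars in $\Z_m^\times$ multiply the first factor.

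The step I expect to need the most care is the prime-by-prime bookkeeping. The claim that $G$ is "a product" at primes $p\mid M$, $p\nmid N$, uses that $N$ is the level, so $G\supseteq\ker(\GL_2(\Zhat)\to\GL_2(\Z/N\Z))$ and $G=G_N\times\GL_2(\Z'_N)$. I will handle this by decomposing everything via $\Zhat=\Z_N\times\Z'_N$ first.

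Finally, I will identify the abstract group $G^{\ag}/G$ with the automorphisms generated by the diamond and similitude operators. This uses the strict action of \cite{Romagny} recalled above, together with the compatibility of quotients: since $\mathcal X_G/(G^{\ag}/G)\simeq\mathcal X_{G^{\ag}}$, the generated subgroup $T_G$, being the full $G^{\ag}/G$, is exactly $\Aut(\mathcal X_G/\mathcal X_{G^{\ag}})$.
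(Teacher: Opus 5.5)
Your proposal is correct and is essentially the paper's argument. The paper's one-line proof (``this follows from our definition of $G^{\ag}$'') amounts to the identity $G^{\ag}=\Zhat^\times\cdot\bigl(G_m\times\GL_2(\Z'_m)\bigr)$. You verify this correctly by sorting primes according to whether they divide $N$, $M$, or both, using $G=G_N\times\GL_2(\Z'_N)$ and the fact that scalars are absorbed into full $\GL_2$ factors. Deferring the normality of $G$ in $G_m$ (equivalently $[G_m,G_m]=[G,G]$) to the paper's assertion just before the lemma is acceptable, since that assertion is built into the definition of the similitude operators.
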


\begin{proof}
This follows from our definition of $G^{\mathrm{ag}}$, after translating through the moduli interpretation. 
\end{proof}

Let $\overline{T}_G = T_G/\{\pm I\}$, so that $\overline{T}_G \simeq \Aut(X_G/X_{G^{\mathrm{ag}}})$.
We say that $\mathcal{X}_G$ is {\it agreeable} if $T_G$ is trivial. Of course $\mathcal{X}_G$ is agreeable if and only if $G$ is agreeable.
Let $\Pi_G \colon \mathcal{X}_G \to \mathcal{X}_{G^{\mathrm{ag}}}$ and $\pi_G \colon X_G \to X_{G^{\mathrm{ag}}}$ be the natural maps.

\begin{lemma}\label{lem: agreeable quotient properties}
     The groups $T_G$ and $\overline{T}_G$ satisfy the following properties:
     \begin{enumerate}
         \item $T_G$ and $\overline{T}_G$ are abelian.
         \item $T_G \simeq G^\mathrm{ag}/G$ and $\overline{T}_G \simeq G^\mathrm{ag}/{\pm{G}}$.
         \item There is an isomorphism $\mathcal{X}_G/T_G \simeq \mathcal{X}_{G^{\mathrm{ag}}}$ of $\Q$-stacks over $\mathcal{X}(1)$
         \item There is an isomorphism $X_G/\overline{T}_G \simeq X_{G^{\mathrm{ag}}}$ of $\Q$-schemes over $X(1)$.
         \item $\mathcal{X}_{G^{\mathrm{ag}}}$ is agreeable.
        \item The map $\Pi_G$ is universal for modular maps $\mathcal{X}_G \to \mathcal{X}_H$ to agreeable modular curves $\mathcal{X}_H$. 
         \item The map $\pi_G$ is universal for modular maps $X_G \to X_H$ with $H$ agreeable. 
     \end{enumerate}
  
\end{lemma}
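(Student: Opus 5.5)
The plan is to derive everything from the group-theoretic facts about $G^{\ag}$ in Lemma \ref{lem:agreeable}, the quotient formalism for normal subgroups recalled before the definition of toric operators, and Lemma \ref{lem:toric}. The main work is to check that $G$ is normal in $G^{\ag}$ with abelian quotient. Once that is established, (2), (3) and (4) follow from the general statement that for $H \trianglelefteq G'$ the strict action of $G'/H$ on $\mathcal{X}_H$ has quotient $\mathcal{X}_{G'}$. Properties (1) and (5) are then immediate, and (6) and (7) are deduced from the minimality property of the agreeable closure.

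\textbf{Normality and abelian quotient.} By Lemma \ref{lem:agreeable}(5), $[G^{\ag},G^{\ag}] = [G,G] \leq G$. Any subgroup containing the commutator subgroup is normal, so $G \trianglelefteq G^{\ag}$ and $G^{\ag}/G$ is abelian. The scalar $-I$ lies in $G^{\ag}$, since agreeable groups contain all scalars. Hence $\pm G$ is also normal with abelian quotient $G^{\ag}/{\pm G}$, which is a quotient of $G^{\ag}/G$. By Lemma \ref{lem:toric} and the definition of toric operators, $T_G = \Aut(\mathcal{X}_G/\mathcal{X}_{G^{\ag}}) \simeq G^{\ag}/G$. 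Since $\overline{T}_G = T_G/\{\pm I\}$, we get $\overline{T}_G \simeq G^{\ag}/{\pm G}$. This gives (1) and (2).

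\textbf{Quotients and agreeability.} For (3), apply the cited fact \cite[Rem.\ 2.4]{Romagny} with $H = G$ and the ambient group $G^{\ag}$. Take a common level $N$ for $G$ and $G^{\ag}$. Then $\mathcal{X}_G = [\mathcal{X}(N)/G_N]$ and quotienting by $G^{\ag}_N/G_N$ gives $[\mathcal{X}(N)/G^{\ag}_N]$. This identification is compatible with the maps to $\mathcal{X}(1)$, because both are induced from $\mathcal{X}(N)\to\mathcal{X}(1)$. For (4), pass to coarse spaces. Coarse space commutes with quotients by finite groups, and $-I$ acts trivially on $X_G$. Hence $X_G/\overline{T}_G \simeq X_{G^{\ag}}$; concretely, on $Y$-parts this is the quotient $Y(N)/G^{\ag}_N$ taken in two steps. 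Property (5) is Lemma \ref{lem:agreeable}(2) combined with the remark that $\mathcal{X}_H$ is agreeable exactly when $H$ is.

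\textbf{Universality.} For (6), let $\mathcal{X}_G \to \mathcal{X}_H$ be modular with $H$ agreeable, so $G \leq H$ up to the conjugation implicit in modular maps. By Lemma \ref{lem:agreeable}(3), $G^{\ag} \leq H$, so the map factors as $\mathcal{X}_G \to \mathcal{X}_{G^{\ag}} \to \mathcal{X}_H$, with both arrows modular. Uniqueness of the factorization follows because $\Pi_G$ is an epimorphism, being a quotient map. For (7) the argument is identical on coarse spaces; one only needs $\pm G \leq H$, which holds because $H$ contains the scalars.

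\textbf{Main obstacle.} The only delicate point is in (3)–(4): making precise that the stacky quotient by the strict action coincides with $\mathcal{X}_{G^{\ag}}$ over $\mathcal{X}(1)$. Some care is needed when $-I \notin G$, where the action of $T_G$ on the coarse space is not faithful, and over the cusps after compactification. I would handle this by checking it on $\mathcal{Y}$ via the moduli description and then extending to the smooth compactifications by normalization, which is functorial.
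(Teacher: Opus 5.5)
Your proposal is correct and is essentially the paper's argument: the paper's proof is the one-line deduction from Lemma~\ref{lem:toric} and Lemma~\ref{lem:agreeable}, and you unpack exactly those ingredients (normality and abelianness from $[G^{\ag},G^{\ag}]=[G,G]\leq G$, the quotient $\mathcal{X}_G/(G^{\ag}/G)\simeq\mathcal{X}_{G^{\ag}}$ recalled before the definition of toric operators, passage to coarse spaces, and minimality of $G^{\ag}$ for (5)--(7)). One small caution: quotient maps of stacks are not epimorphisms in general (e.g.\ $\operatorname{Spec}\Q\to BT$), so the uniqueness remark in (6) is better justified by noting that the second arrow is the modular map determined by the inclusion $G^{\ag}\leq H$; the paper itself only needs the existence of the factorization.
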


\begin{proof}
    This follows from Lemma \ref{lem:toric} and Lemma \ref{lem:agreeable}.
\end{proof}

\begin{proposition}\label{prop: bounded index}
    For every $B \geq 1$, there are finitely many agreeable modular curves $\mathcal X_G$ with index at most $B$. 
\end{proposition}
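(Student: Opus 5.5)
The plan is to bound the level of an agreeable group in terms of its index. Once the level is bounded, there are only finitely many subgroups of $\GL_2(\Z/N\Z)$ for each of the finitely many allowed $N$, and hence only finitely many agreeable $\mathcal X_G$ of index at most $B$. Here the index of $\mathcal X_G$ means $[\GL_2(\Zhat):G]$.

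Let $G$ be agreeable with $[\GL_2(\Zhat):G]\le B$. First consider $[G,G]\le \SL_2(\Zhat)$. We have $[\GL_2(\Zhat),\GL_2(\Zhat)]=\SL_2(\Zhat)$ up to finite index; more precisely, its index in $\SL_2(\Zhat)$ is a fixed finite number, coming from the primes $2$ and $3$. The next step is to bound $[\SL_2(\Zhat):[G,G]]$ in terms of $B$. To do this, note that $G\cap \SL_2(\Zhat)$ has index at most $B$ in $\SL_2(\Zhat)$. Moreover, the commutator subgroup of a finite-index subgroup $U$ of a group $\Gamma$ has index in $[\Gamma,\Gamma]$ bounded in terms of $[\Gamma:U]$. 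One can see this by passing to a normal core of index at most $B!$ and using that $\SL_2(\Zhat)$, and its open subgroups of bounded index, have bounded abelianization. For that I would use that the open subgroups of $\SL_2(\Zhat)$ of index $\le B!$ are finitely many. This holds because $\SL_2(\Zhat)$ is topologically finitely generated, since $\SL_2(\Z)$ is dense, so it has finitely many open subgroups of each index. Each such subgroup $U$ has finite abelianization, because $U$ is open in $\prod_p \SL_2(\Z_p)$ and the perfectness of $\SL_2(\Z_p)$ for $p\ge5$ controls almost all factors. Hence $[U,U]$ is open, and the indices are uniformly bounded.

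Since open subgroups of $\SL_2(\Zhat)$ of bounded index are finitely many, their levels are bounded. So the $\SL_2$-level $M$ of $[G,G]$ is bounded by some constant $C(B)$. Then apply Lemma \ref{lem:agreeable}(6): since $G=G^{\ag}$, the $\GL_2$-level of $G$ divides $2\operatorname{lcm}(M,4)$, which is bounded in terms of $B$. Alternatively, agreeability directly says that the primes dividing the level of $G$ divide $M$. However, the exponents still need a bound, and (6) provides it.

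The main obstacle is the uniform openness and boundedness of $[U,U]$ for open $U\le \SL_2(\Zhat)$ of bounded index; everything else is formal. I would handle this with the finiteness of open subgroups of given index in a finitely generated profinite group, plus the standard fact that $[U,U]$ is open in $\SL_2(\Zhat)$ for every open $U$, as in Zywina's work, rather than by an explicit bound.
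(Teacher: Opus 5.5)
Your proof is correct, but it takes a genuinely different route from the paper's.

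\textbf{The paper's route.} The paper uses $\det(G)=\Zhat^\times$ to identify $[\GL_2(\Zhat):G]$ with $[\SL_2(\Zhat):G\cap\SL_2(\Zhat)]$. It then uses topological finite generation of $\SL_2(\Zhat)$ to conclude that only finitely many $H=G\cap\SL_2(\Zhat)$ can occur. Finally it quotes Zywina (his Lemma~4.1 and Theorem~2.8) for the fact that only finitely many agreeable $G$ have a prescribed intersection $H$ with $\SL_2(\Zhat)$.

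\textbf{Your route.} You produce an explicit level bound instead. From $[\SL_2(\Zhat):U]\le[\GL_2(\Zhat):G]$ with $U=G\cap\SL_2(\Zhat)$, there are finitely many possible $U$, each with open $[U,U]$. Since $[U,U]\le[G,G]$, the index of $[G,G]$, and hence its $\SL_2$-level $M$, takes only finitely many values. Lemma~\ref{lem:agreeable}(4),(6) then force the level of $G=G^{\ag}$ to divide $2\lcm(M,4)$, leaving finitely many subgroups of finitely many $\GL_2(\Z/N\Z)$.

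\textbf{Comparison.} Your version makes the mechanism visible and relies on a lemma already recorded in the paper rather than on Zywina's Theorem~2.8. It also needs only the inequality of indices, so it does not depend on $\det(G)=\Zhat^\times$, which the paper's argument invokes. The price is that you must actually justify openness of $[U,U]$, and perfectness of $\SL_2(\Z_p)$ for $p\ge 5$ is not enough by itself. One way is to reduce to $U\supseteq\ker(\SL_2(\Zhat)\to\SL_2(\Z/N\Z))$, which is a product of local factors. You then use that $[\SL_2(\Z_p),\SL_2(\Z_p)]$ is open for $p=2,3$, and that principal congruence subgroups of $\SL_2(\Z_p)$ have open commutator subgroups.

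\textbf{Minor points.}
\begin{itemize}
\item Your general claim, that $[U,U]$ has index in $[\Gamma,\Gamma]$ bounded in terms of $[\Gamma:U]$ for an arbitrary group $\Gamma$, is false; free groups give counterexamples. It should be stated only for $\SL_2(\Zhat)$, which is all you use.
\item The passage to a normal core is unnecessary.
\item The remark about $[\GL_2(\Zhat),\GL_2(\Zhat)]$ plays no role. (Its index in $\SL_2(\Zhat)$ is $2$, coming only from $p=2$.)
\end{itemize}
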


\begin{proof}  
Since the determinant of an agreeable $G\leq \GL_2(\Zhat)$ surjects onto $\Zhat^\times$, the index of $G$ in $\GL_2(\Zhat)$ is equal to the index of $G\cap \SL_2(\Zhat)$ in $\SL_2(\Zhat)$. Note that $\SL_2(\Zhat)$ is a finitely generated profinite group; such groups have finitely many subgroups of bounded index \cite[Proposition 2.5.1 (a)]{RibesZalesskii}. It follows from \cite[Lemma 4.1 and Theorem 2.8]{zywina_open_image_computations} that for each subgroup of $H\leq \SL_2(\Zhat)$ there are finitely many agreeable subgroups $G\leq \GL_2(\Zhat)$ such that $G \cap \SL_2(\Zhat)=H$, completing the proof. 
\end{proof}

\section{Twist-parameterized and twist-isolated curves}
\label{sec:twistisolatedparam}

We now introduce the key concepts of twist-parameterized and twist-isolated groups. 

 \begin{definition}[Twist-parameterized and twist-isolated curves] \label{def:twist_parameterized} Let $H_1 \leq H_2 \leq \GL_2(\Zhat)$ be open subgroups.
We say that $\mathcal X_{H_1}$ is \textit{twist parameterized over} $\mathcal X_{H_2}$ if the modular map $\pi:\mathcal X_{H_1}\rightarrow \mathcal X_{H_2}$ is  abelian  and $\# \mathcal  X_{H_2}(\Q)=\infty$.   For a group $G\leq \GL_2(\Zhat)$, we say that $\mathcal X_G$ is \textit{twist parameterized} if there exists a group $G \leq H \leq \GL_2(\Zhat)$ such that $\mathcal X_G$ is twist parameterized over $\mathcal X_{H}$.  If $\mathcal X_G$ is  not twist parameterized, we say that it is \textit{twist isolated}. \end{definition}

\begin{definition}[Twist-parameterized and twist-isolated points] 
 
If $\mathcal X_G$ is twist isolated, we say that any point $x\in \mathcal X_G(\Q)$ is a \textit{twist-isolated point} on $\mathcal X_G$. Similarly, if $\mathcal X_G$ is twist parameterized, then we say that any point $x\in \mathcal X_G(\Q)$ is a \textit{twist-parameterized point} on $\mathcal X_G$.

A non-CM $j$-invariant $j_0 \in \Q$ is \textit{twist isolated} if there exists an open $G\leq \GL_2(\Zhat)$ such that $x\in \mathcal X_G(\Q)$ with $j(x)=j_0$, and $\mathcal X_G$ is twist isolated.
A non-CM $j$-invariant $j_0 \in \Q$ is \textit{twist parameterized} if it is not twist isolated. 
\end{definition}

\begin{remark}\label{rem: twist isolated} \hfill
\begin{itemize}
    \item[a)] Taking $H_1= H_2$, it follows that if $\#\mathcal  X_G(\Q) = \infty$, then $\mathcal X_G$ is twist parameterized.
    \item[b)] Being a twist-isolated point is a stronger property than being an isolated point (in the sense of \cite[Definition~4.1]{BELOV}), since it follows from a) that every twist-isolated point is isolated.

    \item[c)] On the other hand, if $H$ is a maximal subgroup of $\GL_2(\Zhat)$, then a rational isolated point on $\mathcal X_H$ is also twist isolated. 

    \item[d)] Denote by $B_0(N)$ the inverse image in $\GL_2(\Zhat)$ of the upper triangular matrices in $\GL_2(\torz{N})$. This is a maximal subgroup of $\GL_2(\Zhat)$. Elliptic curves with $j=-9317$ give isolated points on $\mathcal X_0(37)=X_0(37)=X_{B_0(37)}$, so these points are twist isolated. 
\end{itemize}
\end{remark}

The following lemma motivates these terms, as it shows that if $\mathcal{X}_G$ is twist-parameterized over $\mathcal{X}_H$ then any non-special rational point in $X_H(\Q)$ lifts to a rational point on a unique twist of $X_G$. (We discuss twists in detail in Section \ref{subsec: twists of modular curves} and show that abelian twists of modular curves are again modular curves.)

\begin{lemma}\label{lem: abelian twist lifts}
Let $\pi \colon X_{H_1} \to X_{H_2}$ be an abelian modular map with $T:=\Aut(X_{H_1}/X_{H_2})$, and let $x \in X_{H_2}(\Q)$ be a non-special rational point. Then $x$ lifts to a rational point on the unique twist $X_{H_1}^\psi$ of $X_{H_1}$ by the character $\psi \colon \Gal_\Q \to T$ corresponding to the $T$-torsor $\pi^{-1}(x)$. 
\end{lemma}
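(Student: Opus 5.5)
The plan is to treat this as a statement about twisting a Galois cover and reduce it to the standard description of twists in \cite[III.1.3]{Serre-GaloisCohomology}. First we show that the fiber $\pi^{-1}(x)$ is a $T$-torsor. Since $\pi$ is abelian, $T=\Aut(X_{H_1}/X_{H_2})\simeq \pm H_2/{\pm H_1}$ acts on $X_{H_1}$ over $X_{H_2}$, and $X_{H_1}/T\simeq X_{H_2}$. Because $x$ is non-special, the fiber over $j(x)$ of $X_{H_1}\to X(1)$ lies in the locus where $X(N)\to X(1)$ is étale with $\GL_2(\Z/N\Z)/\{\pm I\}$ acting freely on geometric fibers. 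Here we use that $j(x)\neq 0,1728,\infty$ or, more generally, that the point is not a cusp and has no extra automorphisms; one should note that a non-CM point with $j\in\{0,1728\}$ cannot occur. Consequently $\pi$ is étale over $x$ and $T$ acts simply transitively on $\pi^{-1}(x)(\overline{\Q})$.

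Since the $T$-action is defined over $\Q$ and $x$ is rational, $\Gal_\Q$ acts on this fiber commuting with $T$. Fix $y_0\in\pi^{-1}(x)(\overline\Q)$ and define $\psi(\sigma)\in T$ by $\sigma(y_0)=\psi(\sigma)\,y_0$. Because $T$ is abelian, $\psi$ is a continuous homomorphism $\Gal_\Q\to T$ independent of $y_0$. Its class in $H^1(\Gal_\Q,T)=\Hom(\Gal_\Q,T)$, where the action is trivial because the elements of $T$ are defined over $\Q$, is the class of the torsor $\pi^{-1}(x)$.

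Next we construct the twist. Following \cite[III.1.3]{Serre-GaloisCohomology}, $X_{H_1}^\psi$ is the $\Q$-form of $X_{H_1}$ with an isomorphism $\phi\colon X_{H_1}^\psi\to X_{H_1}$ over $\overline\Q$. This isomorphism satisfies $\phi^{\sigma}=\psi(\sigma)^{\pm1}\circ\phi$, with the sign fixed by convention. Since $T$ is abelian and acts over $X_{H_2}$, the composite $\pi\circ\phi$ descends to a morphism $\pi^\psi\colon X_{H_1}^\psi\to X_{H_2}$ over $\Q$. Now take $z=\phi^{-1}(y_0)$. We compute $\sigma(z)=\phi^{\sigma}{}^{-1}(\sigma y_0)=\phi^{-1}\psi(\sigma)^{\mp1}\psi(\sigma)y_0$, which equals $z$ for the correct sign convention. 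Hence $z$ is rational and lies over $x$. This is essentially the content of the statement that the fiber of the twist is the twist of the fiber, so the twisted torsor is trivial.

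For uniqueness, suppose $\psi'$ is another character such that $x$ lifts to a rational point $z'$ of $X^{\psi'}_{H_1}$. Then $\phi'(z')\in\pi^{-1}(x)$ is a point on which $\Gal_\Q$ acts through $\psi'$, by the same computation run backwards. Since $T$ acts simply transitively and is abelian, this forces $\psi'=\psi$.

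The main obstacle is the first step: establishing freeness of the $T$-action on the fiber for non-special $x$. This requires care with $\pm I$, since we work on coarse spaces with $T=\pm H_2/{\pm H_1}$, and with the elliptic points $j=0,1728$. I would argue via the moduli interpretation: geometric points over $x$ are $H_1$-orbits of level structures on a fixed $E$ with $\Aut(E)=\{\pm1\}$, and $h\in \pm H_2$ fixes such an orbit only if $h\in\pm H_1$.
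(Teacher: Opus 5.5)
Your proof is correct and takes the same route as the paper. The paper observes that $\pi^{-1}(x)$ is a $T$-torsor because $\pi$ is branched only over $j\in\{0,1728,\infty\}$, and cites \cite[8.4.1]{Poonen_book} for the descent step; you carry that step out explicitly, and you also supply the moduli-theoretic freeness check and the uniqueness of $\psi$. The one point needing care is that the twist's sign convention must match the torsor-class convention, which you flagged; with $\phi^\sigma=\psi(\sigma)\circ\phi$ your computation gives $\sigma(z)=z$.
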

\begin{proof}
    The fiber $\pi^{-1}(x)$ is a $T$-torsor, as the branch points of $\pi$ have $j$-invariant $0,1728,$ or $\infty$. For the rest, see \cite[8.4.1]{Poonen_book} 
\end{proof}

\begin{proposition} \label{prop:tp_group_condition}
    Let $G$ and $H$ be open subgroups of $\GL_2(\Zhat)$. 
    
    \begin{enumerate}
        \item $\mathcal X_G$ is a abelian cover of $\mathcal X_H$ if and only if $H$ satisfies  
        \begin{equation} \label{eq_def:param}
            [H,H] \leq G \leq H \leq \GL_2(\Zhat).
        \end{equation} 
        $\mathcal X_G$ is twist parameterized over $\mathcal X_H$ if and only if \eqref{eq_def:param} holds and $\#\mathcal X_H(\Q)=\infty$.
        
        \item If $\mathcal X_G$ is twist parameterized over $\mathcal X_H$ then $\mathcal X_G$ is twist parameterized over $\mathcal X_{H^{\ag}}$.

        \item Let $G=G_E$ for some elliptic curve $E /\Q$ and suppose $H$ is agreeable. Then $\mathcal X_G$ is twist parameterized over $\mathcal X_H$ if and only if $G^{\ag}\leq H$,
        $[H,H]= G \cap \SL_2(\Zhat)=[G^{\ag},G^{\ag}]$, and $\mathcal X_H(\Q)=\infty.$

        \item Let $G=G_E$ for some elliptic curve $E /\Q$. Then $\mathcal X_G$ is twist parameterized if there exists an agreeable group $H$ with $G^{\ag}\leq H$ such that 
        $[H,H]= G \cap \SL_2(\Zhat)=[G^{\ag},G^{\ag}] $ and $\mathcal X_H(\Q)=\infty.$

        \item If $\mathcal X_{G^{\ag}}(\Q)$ is infinite, then $\mathcal X_G$ is twist parameterized over $\mathcal X_{G^{\ag}}$.

        \item Let $E/\Q$ be an elliptic curve. Then $\mathcal X_{G_E^{\ag}}=\mathcal X_{G_j(E)^{\ag}}$, and if $\mathcal X_{G_E}$ is twist parameterized over $\mathcal X_{G_{j(E)}^{\ag}}$, then $j(E)$ is not twist isolated. 

        \item If $\mathcal X_G$ is not twist parameterized over $\mathcal X_{G^{\ag}}$, then  $\mathcal X_{G^{\ag}}$ is not twist parameterized over itself.
        \item Let $G^{\ag} \leq H^{\ag}$. Then $\mathcal X_G$ is twist parameterized over $\mathcal X_{H^{\ag}}$ if and only if $\mathcal X_{G^{\ag}}$ is twist parameterized over $\mathcal X_{H^{\ag}}$.
        
    \end{enumerate}
\end{proposition}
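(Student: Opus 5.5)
Each of the eight parts is short and follows from the group-theoretic facts already recorded: Lemma~\ref{lem:agreeable}, Proposition~\ref{prop:com_im}, Lemma~\ref{lem: agreeable quotient properties}, and the stated criterion that a modular map $\mathcal X_{H_1}\to\mathcal X_{H_2}$ is abelian exactly when $H_1 \trianglelefteq H_2$ with abelian quotient. I will prove them in order, each using the earlier ones.

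For (1), the modular map $\mathcal X_G \to \mathcal X_H$ requires $G \le H$. It is abelian iff $G$ is normal in $H$ with $H/G$ abelian. This holds iff $[H,H] \le G$, since any subgroup containing $[H,H]$ is automatically normal with abelian quotient. The second clause then just restates Definition~\ref{def:twist_parameterized}.

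For (2), suppose $[H,H]\le G\le H$. Lemma~\ref{lem:agreeable}(1),(5) gives $H\le H^{\ag}$ and $[H^{\ag},H^{\ag}]=[H,H]\le G$, so \eqref{eq_def:param} holds for $H^{\ag}$. The forgetful map $\mathcal X_H \to \mathcal X_{H^{\ag}}$ is a finite map of curves, so $\#\mathcal X_H(\Q)=\infty$ forces $\#\mathcal X_{H^{\ag}}(\Q)=\infty$.

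For (3), take $G=G_E$ with $H$ agreeable.
\begin{itemize}
\item[$\Rightarrow$:] From $G\le H$, Lemma~\ref{lem:agreeable}(3) gives $G^{\ag}\le H$. Part (1) gives $[H,H]\le G$. Since $[H,H]\le \SL_2(\Zhat)$, we get $[H,H]\le G\cap\SL_2(\Zhat)=[G,G]$ by Proposition~\ref{prop:com_im}. Conversely $[G,G]\le[H,H]$ because $G\le H$. So equality holds, and Proposition~\ref{prop:com_im} supplies the remaining equality with $[G^{\ag},G^{\ag}]$.
\item[$\Leftarrow$:] Immediate from (1), since $G\le G^{\ag}\le H$ and $[H,H]\le G$.
\end{itemize}
Part (4) follows from (3) and the definition of twist parameterized.

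For (5), apply (1) with $H=G^{\ag}$; the condition $[G^{\ag},G^{\ag}]=[G,G]\le G$ holds by Lemma~\ref{lem:agreeable}(5).

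For (6), $G_{j(E)}=\pm G_E$. The point needing care is that $(\pm G_E)^{\ag}=G_E^{\ag}$. Since $G^{\ag}$ contains all scalars, it contains $\pm G$, so minimality (Lemma~\ref{lem:agreeable}(3)) gives $(\pm G)^{\ag}\le G^{\ag}$; the reverse inclusion is monotonicity. The second claim needs every curve $\mathcal X_{G'}$ with a rational point of $j$-invariant $j(E)$ to be twist parameterized. Such a point comes from some twist $E'$ of $E$, so $G_{E'}\le G'$ up to conjugacy. For $j\neq 0,1728$ the twists are quadratic, so $\pm G_{E'}=\pm G_E$. The step I expect to be the main obstacle is verifying that twist-parameterization passes from $\mathcal X_{G_{E'}}$ upward to $\mathcal X_{G'}$. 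I would first try: $G_{E'}\le G'$ implies $G'^{\ag}\ge G_{E'}^{\ag}=G_E^{\ag}$, and $[G'^{\ag},G'^{\ag}]=[G',G']\ge[G_{E'},G_{E'}]=G'\cap\SL_2$-type containments. That shows $\mathcal X_{G'}$ is abelian over $\mathcal X_{G'^{\ag}}$, which is an image of $\mathcal X_{G_E^{\ag}}$ and so has infinitely many points. If the containments do not match exactly, I would use (1) directly with $H=G'G_E^{\ag}$ (a group, since $G_E^{\ag}$ normalizes up to commutators).

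For (7), argue by contrapositive. If $\mathcal X_{G^{\ag}}$ is twist parameterized over itself, then $\#\mathcal X_{G^{\ag}}(\Q)=\infty$, and (5) applies.

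For (8), $[H^{\ag},H^{\ag}]\le G$ iff $[H^{\ag},H^{\ag}]\le G^{\ag}$. The forward direction holds since $G\le G^{\ag}$. For the reverse, $[H^{\ag},H^{\ag}]\le G^{\ag}\cap\SL_2(\Zhat)$, and this equals $G\cap\SL_2(\Zhat)$ from the definition of $G^{\ag}$ as $\Z_M^\times G_M\times\GL_2(\Z_M')$. The rational-point conditions in the two statements are identical.
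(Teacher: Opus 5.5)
Parts (1)--(7) are correct and follow the paper's (mostly one-line) arguments. In (6) you can drop the hedging. For any $G'$ containing some $G_{E'}$ with $j(E')=j(E)$, you have $G_E^{\ag}=G_{E'}^{\ag}\le (G')^{\ag}$. So $\mathcal X_{(G')^{\ag}}(\Q)$ is infinite and (5) applies to $G'$. The containment ``$[G_{E'},G_{E'}]=G'\cap\SL_2$'' is wrong as written, and $G'G_E^{\ag}$ need not be a group, but neither is needed.

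The genuine gap is the reverse implication in (8). Your identity $G^{\ag}\cap\SL_2(\Zhat)=G\cap\SL_2(\Zhat)$ is false. The factor $\GL_2(\Z_M')$ in $G^{\ag}$ discards any coupling in $G$ between its $M$-adic part and primes not dividing $M$, and this can enlarge the determinant-one part. In fact the implication itself fails.

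Here is a counterexample. Let $A_3\le\GL_2(\Z/2\Z)\cong S_3$ be the subgroup of order $3$ and $N$ its preimage in $\GL_2(\Zhat)$, so $\mathcal X_N=X_{\mathrm{ns}}(2)$. Fix an isomorphism $\psi\colon A_3\to\mu_3$ and a surjective character $\chi\colon\Zhat^\times\to(\Z/7\Z)^\times\to\mu_3$. Put $G=\{g\in N:\psi(g\bmod 2)=\chi(\det g)\}$; its rational points are curves with $\Q(E[2])$ the cubic subfield of $\Q(\zeta_7)$.
\begin{itemize}
\item $G$ contains the factors $\GL_2(\Z_p)$ for $p\ne 2,7$ and the (perfect) factor $\SL_2(\Z_7)$. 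Hence $[G,G]$ contains $\SL_2(\Z_p)$ for every odd $p$, and its $\SL_2$-level is a power of $2$.
\item $G$ surjects onto the $2$-adic component of $N$, so $G^{\ag}=N$.
\item Yet $G\cap\SL_2(\Zhat)$ is the principal congruence subgroup of level $2$, which has index $3$ in $N\cap\SL_2(\Zhat)=G^{\ag}\cap\SL_2(\Zhat)$.
\end{itemize}
Now take $H=\GL_2(\Zhat)=H^{\ag}$. Then $\mathcal X_{G^{\ag}}=X_{\mathrm{ns}}(2)$ is a double cover of $\mathcal X(1)$, hence twist parameterized over it. But $\mathcal X_G$ is not: conjugating by an element that reduces to a transposition mod $2$ inverts $\psi(g\bmod 2)$ and fixes $\det g$. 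So $G$ is not normal in $\GL_2(\Zhat)$.

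Only the forward implication of (8), which you prove correctly, holds in general. What is true, and what Section~\ref{sec:algorithm} actually uses, is the following. Suppose $G\cap\SL_2(\Zhat)=[G,G]$ (e.g.\ $G=G_E$, by Proposition~\ref{prop:com_im}) and $G^{\ag}\le H^{\ag}$. Then $\mathcal X_G$ is twist parameterized over $\mathcal X_{H^{\ag}}$ if and only if $[H^{\ag},H^{\ag}]=[G^{\ag},G^{\ag}]$ and $\#\mathcal X_{H^{\ag}}(\Q)=\infty$. The reason is that $[H^{\ag},H^{\ag}]\le G$ forces $[H^{\ag},H^{\ag}]\le G\cap\SL_2(\Zhat)=[G,G]=[G^{\ag},G^{\ag}]\le[H^{\ag},H^{\ag}]$.

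The paper's own proof of (8) asserts that both conditions in (8) are equivalent to this equality of commutators. That is right for $\mathcal X_G$ under the hypothesis above, but not for $\mathcal X_{G^{\ag}}$, which only requires $[H^{\ag},H^{\ag}]\le G^{\ag}$. In the example, $[\GL_2(\Zhat),\GL_2(\Zhat)]\le N$ although $[N,N]\ne[\GL_2(\Zhat),\GL_2(\Zhat)]$. So even for $G=G_E$ the statement fails whenever $G_E^{\ag}=N$. What you can prove is the equality criterion, not (8) as written.
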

\begin{proof}
Part (1) follows immediately from Definition \ref{def:twist_parameterized}. Note that $\mathcal X_G \rightarrow \mathcal X_H$ is abelian if and only if $G$ is normal in $H$ and $H/G$ is abelian (which is equivalent to $[H,H]\leq G$).

Part (2) follows from (1). Indeed, if $\mathcal X_G$ is twist parameterized over $\mathcal X_H$, then the inclusion $H \leq H^{\ag}$ gives a map $\mathcal  X_H \to \mathcal X_{H^{\ag}}$. Since $\# \mathcal X_H(\Q)=\infty$, we have $\# \mathcal X_{H^{\ag}}(\Q)=\infty$ as well. Finally, the equality $[H,H]=[H^{\ag},H^{\ag}]$ implies $[H^{\ag},H^{\ag}] \leq G$.

For (3), suppose $G=G_E$ for some elliptic curve $E /\Q$. By Lemma \ref{lem:agreeable} (3) and (4) we have that $G^{\ag}\leq H=H^{\ag}$ is equivalent to $G\leq H$. Since $[\GL_2(\Zhat),\GL_2(\Zhat)]\leq \SL_2(\Zhat)$ it follows that $[H,H]\leq G\cap \SL_2(\Zhat)$ is equivalent to $[H,H]\leq G$. Note that 
$$G \cap \SL_2(\Zhat)=[G^{\ag},G^{\ag}]$$
when $G=G_E$ by \cite[Lemma 2.1 (iii)]{zywina_open_image_computations}, from which the equality now follows. 

Part (4) follows directly from (3). Part $(5)$ follows from Lemma \ref{lem:agreeable}, (1) and (5). In part (6), $G_E^{\ag} = G_{j(E)}^{\ag}$ follows immediately from the properties of the agreeable closure. The second part of (6), and (7) follow directly from the definitions.

To prove (8), note that we have 
\begin{equation}\label{eq:pf}
    [G,G]=[G^{\ag},G^{\ag}]\leq [H^{\ag},H^{\ag}].
\end{equation}
Both $\mathcal X_G$ and $\mathcal X_G$ will be twist parameterized over $\mathcal X_{H^{\ag}}$ if and only if equality holds in \eqref{eq:pf} and $\mathcal X_{H^{\ag}}(\Q)=\infty.$
\end{proof}

\subsection{Families of groups}
The following is \cite[Defintition 14.1]{zywina_explicit_images}.
\begin{definition}\label{def:twist_family}
Let 
$G \leqslant \operatorname{GL}_2(\widehat{\mathbb{Z}})$
be an open subgroup with
$\det(G)=\widehat{\mathbb{Z}}^\times$ and $-I \in G$.
Fix a closed subgroup $B$ of $G$ with $[G,G]\leqslant B$.
The \emph{family of groups} associated to the pair $(G,B)$ is
\[
\mathcal{F}(G,B)
  := \{\, H \leqslant G \mid
      \det(H)=\widehat{\mathbb{Z}}^\times
      \text{ and }
      H\cap \operatorname{SL}_2(\widehat{\mathbb{Z}})
      = B\cap \operatorname{SL}_2(\widehat{\mathbb{Z}})
   \,\}.
\]
\end{definition}

Note that if $G_0 \in \mathcal F(G,B)$ then $G_0$ is normal in $G$ and the quotient $G/G_0$ is finite abelian.

\begin{lemma} \label{lem: G_Eappears}
Let $G_E$ be the adelic image of an elliptic curve over $\Q$ then $G_E$ is an element of the family  $\mathcal F (G_E^{\ag}, [G_E^{\ag},G_E^{\ag}]).$ 
\end{lemma}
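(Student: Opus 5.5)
We check the defining conditions of $\mathcal F(G,B)$ with $G = G_E^{\ag}$ and $B=[G_E^{\ag},G_E^{\ag}]$, in order.

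First we verify that the pair $(G_E^{\ag}, B)$ is admissible in Definition \ref{def:twist_family}. The group $G_E^{\ag}$ is open and agreeable by Lemma \ref{lem:agreeable}(2), so it contains all scalars in $\GL_2(\Zhat)$; in particular $-I\in G_E^{\ag}$. We have $\det(G_E^{\ag}) \supseteq \det(G_E) = \Zhat^\times$, because the determinant of $\rho_E$ is the cyclotomic character, which is surjective over $\Q$. The subgroup $B$ is closed, since it equals $G_E\cap\SL_2(\Zhat)$ by Proposition \ref{prop:com_im}, an intersection of closed subgroups. The containment $[G,G]\le B$ is trivially an equality.

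Next we check that $G_E$ itself belongs to the family. We have $G_E\le G_E^{\ag}$ by Lemma \ref{lem:agreeable}(1), and $\det(G_E)=\Zhat^\times$ as above. It remains to show $G_E\cap \SL_2(\Zhat) = B\cap\SL_2(\Zhat)$. Since $B\le\SL_2(\Zhat)$, the right-hand side is $B$ itself. Proposition \ref{prop:com_im} gives $B = [G_E^{\ag},G_E^{\ag}] = G_E\cap\SL_2(\Zhat)$, which is exactly the required equality.

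There is no real obstacle here. The only nontrivial input is Proposition \ref{prop:com_im}, namely the equality $[G_E,G_E]=G_E\cap\SL_2(\Zhat)$. This is where the fact that $G_E$ comes from an elliptic curve over $\Q$ enters, via Kronecker--Weber: $\Q^{\mathrm{ab}}=\Q^{\cyc}$. The other point needing care is remembering that agreeability forces $-I\in G_E^{\ag}$, even when $-I\notin G_E$.
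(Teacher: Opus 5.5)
Your proof is correct and follows the paper's argument. The key step in both is Proposition \ref{prop:com_im}, which gives $[G_E^{\ag},G_E^{\ag}]=G_E\cap\SL_2(\Zhat)$, together with the observation that this group already lies in $\SL_2(\Zhat)$, so intersecting it with $\SL_2(\Zhat)$ changes nothing. Beyond this, you also verify the side conditions the paper leaves implicit: surjectivity of $\det$ via the cyclotomic character, $-I\in G_E^{\ag}$ by agreeability, closedness of $B$, and $G_E\le G_E^{\ag}$.
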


\begin{proof}
     By Proposition \ref{prop:com_im} we have $[G_E^{\ag},G_E^{\ag}]=[G_E,G_E]=G_E \cap \SL_2(\Zhat)$. Since $[G_E^{\ag},G_E^{\ag}]\leq  \SL_2(\Zhat)$, it follows that $[G_E^{\ag},G_E^{\ag}]\cap  \SL_2(\Zhat)= G_E \cap \SL_2(\Zhat)$, and hence $G_E \in \mathcal F (G_E^{\ag}, [G_E^{\ag},G_E^{\ag}]).$
\end{proof}

In particular, the above means that for every adelic image of an elliptic curve over $\mathbb Q$ there is an agreeable group $G$ such that $G_E \in \mathcal F(G, [G,G])$.

There is some ambiguity in the literature about the definition of the modular curve $X_G$: in particular, whether $\GL_2(\widehat{\Z})$ acts on the right or on the left. See \cite[Remark 2.1]{RSZB22} for a discussion of this. To avoid this issue with conventions, we prove the following lemma, which demonstrates that the ambiguity makes no difference to the results of this paper. 
\begin{lemma}
Let $\mathcal F(G,B)$ be a family of groups and assume that $G$ is agreeable, then family of groups $\mathcal F(G^\top,B^\top)$ is conjugate to $\mathcal F(G,B)$. 
\end{lemma}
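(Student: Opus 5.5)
Since $G$ is agreeable, it contains all scalars and has full determinant (or at least is stable under the relevant operations), and we want a single $\gamma\in\GL_2(\Zhat)$ with $\gamma G^\top\gamma^{-1}=G$ and $\gamma B^\top \gamma^{-1}$ having the same intersection with $\SL_2(\Zhat)$ as $B$ (or simply $\gamma B^\top\gamma^{-1}=B$ when $B=[G,G]$). Conjugation by such a $\gamma$ then carries $\mathcal F(G^\top,B^\top)$ bijectively onto $\mathcal F(G,B)$. The conditions defining a family ($H\le G$, $\det H=\Zhat^\times$, $H\cap\SL_2=B\cap\SL_2$) are all preserved by transpose and by conjugation, since both respect $\det$ and $\SL_2(\Zhat)$.

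My first attempt uses no special conjugator but the identity $g^\top = w\,(\det g)\, g^{-1}\, w^{-1}$ for $2\times 2$ matrices, where $w=\begin{pmatrix}0&1\\-1&0\end{pmatrix}$. Indeed $w g^{-1} w^{-1} = (\det g)^{-1}\,\mathrm{adj}(g)$ conjugated suitably, and for $2\times2$ matrices $\mathrm{adj}(g)^\top$ relates to $g$ via $w$: explicitly $w g w^{-1} = (\det g)(g^{-1})^\top$. Hence $g^\top = (\det g)\, w g^{-1} w^{-1}$.

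Apply this to the family. For $H\in\mathcal F(G^\top,B^\top)$, write $H=K^\top$ with $K\le G$, $\det K=\Zhat^\times$, and $K\cap \SL_2=B\cap\SL_2$. Then $H=\{(\det k)\,wk^{-1}w^{-1}\}$. For $k\in K\cap\SL_2(\Zhat)$ this reads $wk^{-1}w^{-1}$, so $w^{-1}(H\cap \SL_2)w = K\cap\SL_2 = B\cap \SL_2$. In particular $w^{-1}Hw\cap\SL_2=B\cap\SL_2$ and $\det(w^{-1}Hw)=\Zhat^\times$. The main point is to show $w^{-1}Hw\le G$. Its elements are $(\det k)k^{-1}$, a scalar times an element of $G$, and since agreeable $G$ contains all scalars, this lies in $G$. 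So $H\mapsto w^{-1}Hw$ maps $\mathcal F(G^\top,B^\top)$ into $\mathcal F(G,B)$.

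For the reverse direction, the same computation with the roles of $G$ and $G^\top$ swapped works because $G^\top$ is also agreeable (transpose preserves scalars, levels and commutator levels). This gives an inverse map, namely conjugation by $w$, so the two families are conjugate by the single element $w$.

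The main obstacle is bookkeeping rather than ideas: verifying the identity $w g w^{-1}=(\det g)(g^{-1})^\top$ entrywise, and observing that the scalar factor $\det k$ is absorbed precisely because agreeable groups contain $\Zhat^\times$. This is exactly where the agreeability hypothesis is used.
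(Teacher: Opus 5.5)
Your proof is correct and follows essentially the paper's route. You use the same identity $g^\top = (\det g)\,w g^{-1} w^{-1}$; the paper's $M$ is your $w^{-1}=-w$, which induces the same conjugation. Agreeability is used in both exactly to absorb the scalar $\det g$, and $\det=1$ handles the $\SL_2(\Zhat)$-part. The only difference is packaging: the paper shows directly that $G^\top = MGM^{-1}$ and $(B\cap\SL_2(\Zhat))^\top = M(B\cap\SL_2(\Zhat))M^{-1}$, whereas you check membership group by group in the family. Your reverse direction actually yields conjugation by $w^{-1}$, which agrees with conjugation by $w$ because $w^2=-I$ is central.
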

\begin{proof}
Note that $F(G,B)=F(G,B\cap \SL_2(\Zhat))$, so we can restrict to the case $B \subseteq\SL_2(\Zhat)$. Letting $M = \left(\begin{smallmatrix} 0 & -1\\ 1 & 0\end{smallmatrix}\right)$, 
it is sufficient to show that $G^\top = MGM^{-1}$ and $B^\top = MBM^{-1}$.
The identity $G^\top = MGM^{-1}$ follows from the fact for all $g \in \GL_2(\Zhat)$ one has  $g^\top = \det(g) Mg^{-1}M^{-1}$ and the fact that $G$ contains all scalar matrices by the definition of agreeable groups. The identity $B^\top = MBM^{-1}$ follows similarly, except that $B$ might not contain all scalar matrices, so instead one uses $\det(g) =1$ for all $g \in B \subseteq \SL_2(\Zhat)$.
\end{proof}

\subsection{Twists of a subgroups of \texorpdfstring{$\GL_2(\Zhat)$}{GL2Zhat} by a character}

\begin{definition}\label{def:modular_twisting_character}
Let $H \leq G^{\ag}$ be such that $\SL_2(\Zhat) \cap G= \SL_2(\Zhat) \cap H$. 
The homomorphism \begin{align*}\gamma_G:\det(G)\to& G^{\ag}/H\\
 \det(g) \mapsto& gH, 
\end{align*} is well defined since $\SL_2(\Zhat) \cap G\leq H$ and hence $G \to G^{\ag}/H$ factors through the determinant map.
\end{definition}

\begin{lemma}\label{lem:reconstruct_group_from_twisting_character}
If $H \leq G^{\ag}$ is an open subgroup such that $\SL_2(\Zhat) \cap G= \SL_2(\Zhat) \cap H$, then  $$G = \set{g \in G^{\ag} | \det(g) \in \det(G),  \gamma_G(\det(g))=gH}.$$
\end{lemma}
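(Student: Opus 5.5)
We prove the two inclusions directly, using only the definition of $\gamma_G$ (Definition \ref{def:modular_twisting_character}) and the hypothesis $\SL_2(\Zhat)\cap G=\SL_2(\Zhat)\cap H$. Write $S$ for the right-hand side $\set{g \in G^{\ag} \mid \det(g)\in\det(G),\ \gamma_G(\det(g))=gH}$.

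For $G\subseteq S$, take $g\in G$. Then $g\in G^{\ag}$ by Lemma \ref{lem:agreeable}(1), and trivially $\det(g)\in\det(G)$. By the very definition of $\gamma_G$, we have $\gamma_G(\det g)=gH$. This inclusion is immediate; the only content is the well-definedness of $\gamma_G$, which was already justified: $G\cap\SL_2(\Zhat)\le H$ means $G\to G^{\ag}/H$ kills $\ker(\det|_G)$.

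For $S\subseteq G$, take $g\in S$. Since $\det(g)\in\det(G)$, choose $g_0\in G$ with $\det(g_0)=\det(g)$. Then $gH=\gamma_G(\det g)=\gamma_G(\det g_0)=g_0H$, so $h:=g_0^{-1}g\in H$. Moreover $\det(h)=1$, so $h\in H\cap\SL_2(\Zhat)=G\cap\SL_2(\Zhat)\subseteq G$. Hence $g=g_0h\in G$.

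There is no real obstacle here. The key step is the second inclusion, where the hypothesis $\SL_2(\Zhat)\cap G=\SL_2(\Zhat)\cap H$ is used in the reverse direction, to pull $h$ back from $H$ into $G$. One should also note that $G^{\ag}/H$ makes sense as a set of cosets even though we never need normality of $H$ for this argument. Normality of $H$ in $G^{\ag}$ does follow from the preceding lemma, since $G^{\ag}/H$ is abelian there; it is needed only to make $\gamma_G$ a homomorphism, not for the set equality.
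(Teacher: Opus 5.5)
Your proof is correct. The paper gives no argument of its own here; it simply cites equation (1.1) of \cite{zywina_open_image_computations}. Your two-inclusion verification is the elementary argument behind that citation, and it makes the lemma self-contained.

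Your route also shows exactly where each half of the hypothesis enters, which the citation hides:
\begin{itemize}
\item The inclusion $\SL_2(\Zhat)\cap G\le H$ is used only for the well-definedness of $\gamma_G$, and hence for $G\subseteq S$.
\item The reverse inclusion $\SL_2(\Zhat)\cap H\le G$ is precisely what pulls $h=g_0^{-1}g$ back into $G$.
\end{itemize}
As you observe, neither openness of $H$ nor its normality in $G^{\ag}$ is needed for the set equality. Normality matters only if you want $\gamma_G$ to be a homomorphism into a group, and it is supplied by the preceding lemma via $[G^{\ag},G^{\ag}]\le H$. The citation buys the authors brevity; your argument buys transparency at essentially no cost.
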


\begin{proof}
    This is \cite[(1.1)]{zywina_open_image_computations}.
\end{proof}

\begin{definition}[Twists in terms of characters]\label{def:twist_character}Let $G,B$ be as in Definition \ref{def:twist_family}.
Assume $\mathcal{F}(G,B)\neq\emptyset$ and fix a group $H\in\mathcal{F}(G,B)$. For a continuous homomorphism
$\chi:\widehat{\mathbb{Z}}^\times \to G/H$ define
the subgroup $H^\chi\leq G$ by
\[
H^\chi := \{\, g\in G \mid gH= \chi(\det g) \,\}.
\]

\end{definition}

The following is just a rephrasing of Lemma \ref{lem:reconstruct_group_from_twisting_character}.
\begin{proposition}
Let $G,B$ and $H$ be as in Definitions \ref{def:twist_family} and \ref{def:twist_character}
Then the map
\begin{align*}
\Phi:\Hom_{\rm cts}\big(\widehat{\mathbb{Z}}^\times,\,G/H\big)&\rightarrow
\mathcal{F}(G,B)\\
\chi&\mapsto H^\chi
\end{align*}
is a bijection. Its inverse is given by $G \mapsto \gamma_G$ where $\gamma_G$ is as in Definition \ref{def:modular_twisting_character}.
\end{proposition}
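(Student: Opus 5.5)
We check directly that $\Phi$ is well defined (lands in $\mathcal F(G,B)$), that $\Psi\colon K\mapsto\gamma_K$ is well defined, and that the two maps are mutually inverse; Lemma \ref{lem:reconstruct_group_from_twisting_character} carries the main step.

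\textbf{$\Phi$ is well defined.} Fix $\chi$. The set $H^\chi$ is the preimage of the graph of $\chi\circ\det$ under the homomorphism $G\to \Zhat^\times\times G/H$, $g\mapsto(\det g, gH)$. Since $G/H$ is abelian, that graph is a subgroup, so $H^\chi$ is a closed subgroup of $G$; it contains $H\cap\ker\det$, so it is open. For determinants, let $u\in\Zhat^\times$. Since $\det(H)=\Zhat^\times$, pick $h\in H$ with $\det h=u$, and pick $g_0\in G$ with $g_0H=\chi(u)$. Then $g_0$ may have the wrong determinant, but $g_0^{-1}h'$ with $h'\in H$, $\det h'=\det g_0$, lies in $H$. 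So we may replace $g_0$ by $g_0h''$ with $h''\in H$ chosen to give $\det(g_0h'')=u$, without changing the coset. Hence $\det(H^\chi)=\Zhat^\times$. Finally, for $g\in \SL_2(\Zhat)\cap G$ the defining condition reads $gH=\chi(1)=H$, i.e.\ $g\in H$. Thus $H^\chi\cap\SL_2(\Zhat)=H\cap \SL_2(\Zhat)=B\cap\SL_2(\Zhat)$, and $H^\chi\in\mathcal F(G,B)$.

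\textbf{$\Psi$ is well defined.} Take $K\in\mathcal F(G,B)$. Then $K\cap \SL_2(\Zhat)=H\cap\SL_2(\Zhat)$, and $\det(K)=\Zhat^\times$. To match the setup of Definition \ref{def:modular_twisting_character} (which is phrased with $G^{\ag}$ and $\SL_2$-intersections), use $G$ itself in place of $G^{\ag}$: the same argument works verbatim. Concretely, $K\to G/H$ kills $K\cap\SL_2(\Zhat)\le H$, so it factors through $\det$, giving a continuous $\gamma_K\colon\Zhat^\times\to G/H$. Continuity holds because $\det|_K$ is an open surjection of profinite groups and $G/H$ is finite. This use of $G$ rather than $G^{\ag}$ is the only point needing care: one should note that $[G,G]\le B$ together with $K\cap\SL_2=B\cap\SL_2$ is exactly what makes the factorization valid, and nothing else about $G^{\ag}$ is used.

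\textbf{Inverse relations.} First, $\gamma_{H^\chi}=\chi$. Given $u$, choose $g\in H^\chi$ with $\det g=u$ (possible by the surjectivity shown above); then $\gamma_{H^\chi}(u)=gH=\chi(u)$. Second, $H^{\gamma_K}=K$. The inclusion $K\subseteq H^{\gamma_K}$ is immediate from the definition of $\gamma_K$. Conversely, suppose $g\in G$ satisfies $gH=\gamma_K(\det g)$, and pick $k\in K$ with $\det k=\det g$. Then $gH=kH$, so $k^{-1}g\in H\cap\SL_2(\Zhat)=K\cap\SL_2(\Zhat)$, hence $g\in K$. This is precisely Lemma \ref{lem:reconstruct_group_from_twisting_character}.

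Therefore $\Phi$ and $\Psi$ are mutually inverse, and $\Phi$ is a bijection with inverse $K\mapsto\gamma_K$.
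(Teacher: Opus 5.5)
Your proposal is correct and follows essentially the paper's route: the paper presents the proposition as a direct rephrasing of Lemma \ref{lem:reconstruct_group_from_twisting_character}, and you fill in the routine checks around it, with that lemma supplying the key identity $H^{\gamma_K}=K$. One small fix: $H^\chi$ is open because it contains $H\cap\ker(\chi\circ\det)$, which is open since $\chi$ is continuous and $H$ is open; the group $H\cap\ker\det=H\cap\SL_2(\Zhat)$ that you cite is not open in $\GL_2(\Zhat)$.
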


\subsection{Twists of modular curves}\label{subsec: twists of modular curves}

Suppose for this subsection that $H \trianglelefteq G \leq \GL_2(\Zhat)$, and that $G$ and $H$ are open with surjective determinant. Recall that $G/H$ acts naturally on $\mathcal X_H$, and $\mathcal X_{G} \cong \mathcal X_H/(G/H)$.

For  $\psi \in \mathrm{H}^1(\Gal_\Q, G/H) = \Hom(\Gal_\Q, G/H)$, we denote by $\mathcal X_H^\psi$ the twist of $\mathcal X_H$ by $\psi$. Explicitly, $\mathcal{X}_H^\psi$ is the contracted product $\mathcal{X}_H \times^{G/H} Z$, where $Z$ is the $(G/H)$-torsor corresponding to $\psi$; because the action on $\mathcal{X}_G$ is strict, the construction is as in \cite[5.12.5.2]{Poonen_book}, but see also \cite[\S2]{etalebrauermanin2025}. We similarly define $X^\psi_H$. These twists are in general not necessarily modular curves themselves. However, when $G/H$ is abelian, they \emph{are} modular curves:

\begin{proposition} \label{prop:gps-curves}
    Assume that $G/H$ is abelian and let $\psi:  \Gal_\Q\to G/H$ be a homomorphism. The curve $\mathcal X_H^\psi$ as defined above is isomorphic to the modular curve corresponding to the group
    $$H^\psi\colonequals H^\gamma,$$
    where $H^\gamma$ is as defined in Definition \ref{def:twist_character} and $\gamma: \Zhat^\times \to G/H$ is the unique homomorphism satisfying 
$$\psi(\sigma)=\gamma (\chi_{cyc}(\sigma^{-1})) \quad \text{for all} \quad \sigma \in \Gal_\Q.$$
Moreover, $H^\psi \trianglelefteq G$ and $G/H^\psi$ is abelian.
\end{proposition}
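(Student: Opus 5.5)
The plan is to compare the two objects through their Galois-twisted moduli interpretations, working at a finite level $N$ that is divisible by the levels of $G$ and $H$. Write $\mathcal X_H=[\mathcal Y(N)/H_N]$ compactified, with the action of $G/H$ induced by right multiplication on level structures. The key input is the standard description of $\Q$-points, and more generally $S$-points, of $\mathcal Y_H$. A $G$-level structure on $E$ over $\overline{\Q}$ descends to $\Q$ exactly when $\alpha\circ\rho(\sigma)\in \alpha H$ for all $\sigma$, where the Galois action on $\mathcal Y(N)$ is twisted by the cyclotomic character through the Weil pairing. Concretely, $\sigma$ acts on the fiber over $E$ by $\alpha\mapsto \rho_E(\sigma)\alpha$, and the determinant records $\chi_{cyc}(\sigma)$.

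\textbf{Step 1.} Write down the twisted Galois action. For $\psi\in\Hom(\Gal_\Q,G/H)$, the twist $\mathcal X_H^\psi$ is the same geometric stack $\mathcal X_{H,\overline{\Q}}$ with the descent datum $\sigma\mapsto \psi(\sigma)\circ\sigma$. Since $G/H$ is abelian, this is a cocycle, so the twist makes sense. On moduli it sends $(E,\alpha H)\mapsto(E^\sigma,\sigma(\alpha)\,\psi(\sigma)H)$.

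\textbf{Step 2.} Realise $\psi$ through the determinant. Because $G$ and $H$ have full determinant and $G/H$ is abelian of finite level, $\psi$ factors through $\Gal(\Q(\zeta_N)/\Q)$ once $N$ is large, by Kronecker--Weber. So there is a unique $\gamma\colon\Zhat^\times\to G/H$ with $\psi(\sigma)=\gamma(\chi_{cyc}(\sigma^{-1}))$. We then define $H^\psi=H^\gamma$, an open subgroup of $G$. It contains $H\cap\SL_2(\Zhat)$ and has full determinant, since for any $u$ we can choose $g\in G$ with $\det g=u$ and then correct it within its $H$-coset.

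\textbf{Step 3.} Build the isomorphism. Both $\mathcal X_H^\psi$ and $\mathcal X_{H^\gamma}$ become isomorphic to $\mathcal X_{H\cap\SL_2}$-type quotients over $\Q(\zeta_N)$, or more cleanly, to quotients of $\mathcal X_{H\cap H^\gamma}$. The plan is to use $K=H\cap H^\gamma=\{h\in H:\gamma(\det h)=1\}$, which is normal in $G$. Then $\mathcal X_H$ and $\mathcal X_{H^\gamma}$ are quotients of $\mathcal X_K$ by $H/K$ and $H^\gamma/K$, and both of these map isomorphically onto the image of $\det$. We compare the descent data on $\mathcal X_K\otimes\overline{\Q}$. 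Multiplying a level structure $\alpha$ by an element $g\in G$ with $gH=\gamma(\det g)$ converts the $H$-orbit condition twisted by $\psi$ into the $H^\gamma$-orbit condition. The sign convention $\sigma^{-1}$ appears because Galois acts on the left while level structures are changed on the right. Concretely, the map $\alpha H\mapsto \alpha H^\gamma$, defined after choosing a representative whose determinant is compatible with the Weil pairing value, intertwines $\psi(\sigma)\circ\sigma$ with $\sigma$. This gives a $\overline{\Q}$-isomorphism that commutes with Galois, and Galois descent then yields an isomorphism over $\Q$.

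\textbf{Step 4.} Normality and the quotient. $H^\gamma$ contains $[G,G]$, because $[G,G]\subseteq H\cap\SL_2$ and $\gamma(1)=1$. Hence $H^\gamma$ is normal in $G$ and $G/H^\gamma$ is abelian.

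The main obstacle is Step 3: making the isomorphism canonical and checking that the cocycle matches exactly, including the inverse and the left/right conventions, on the stack level rather than only on coarse spaces. I would verify it on the open part $\mathcal Y$ through the functor of points, where it is a purely group-theoretic computation, and then extend over the cusps by normality and uniqueness of smooth compactifications, as in Romagny's framework for the strict action.
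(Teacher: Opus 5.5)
Your argument is correct, but it takes a genuinely different and more self-contained route than the paper. The paper does no computation: it obtains $\mathcal X_H^\psi\simeq\mathcal X_{H^\psi}$ by citing Zywina's Proposition 2.10. It says that $H^\psi\trianglelefteq G$ with $G/H^\psi$ abelian ``can be seen from the proof'' there.

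You instead descend along the common cover $\mathcal X_K$, where $K=H\cap H^\gamma$. Since $\det K=\ker\gamma$, the Weil pairing class modulo $\ker\gamma$ splits $\mathcal X_K\otimes\overline{\Q}$ into components. Both $H/K$ and $H^\gamma/K$ permute these components simply transitively. Restricting to the component whose class lies in $\ker\gamma$ therefore gives an isomorphism $\phi\colon\mathcal X_H\otimes\overline{\Q}\to\mathcal X_{H^\gamma}\otimes\overline{\Q}$, and one checks that the descent data match.

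What your route buys: the proof is carried out directly for the stacks in the statement, the $\sigma^{-1}$ becomes transparent, and the final assertion follows in one line from $[G,G]\le H\cap\SL_2(\Zhat)\le H^\gamma$. The paper's route is shorter but delegates all of this to the reference.

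A few points to tighten:

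\begin{enumerate}
\item The step you call the main obstacle is a one-line check. Write $w(\alpha)\in\Zhat^\times$ for the Weil pairing class, so $w(\alpha g)=w(\alpha)\det g$ and $w(\sigma\alpha)=\chi_{cyc}(\sigma)w(\alpha)$. For $w(\alpha)\in\ker\gamma$, pick $g_\sigma\in H^\gamma$ with $\det g_\sigma=\chi_{cyc}(\sigma)^{-1}$, so that $g_\sigma H=\psi(\sigma)$. Then $\sigma\alpha g_\sigma$ is again normalized and $\sigma\alpha g_\sigma H^\gamma=\sigma\alpha H^\gamma$, i.e.\ $\phi\circ(\psi(\sigma)\circ\sigma)=\sigma\circ\phi$. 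So the inverse in $\chi_{cyc}(\sigma^{-1})$ is exactly what undoes the cyclotomic shift of the Weil pairing; it does not come from left-versus-right conventions.
\item The existence of $\gamma$ follows from continuity of $\psi$ and Kronecker--Weber alone, not from $\det G=\det H=\Zhat^\times$. Moreover, your $N$ must be enlarged to include the conductor of $\psi$, since $K$ and $H^\gamma$ generally have larger level than $G$ and $H$.
\item $H/K$ and $H^\gamma/K$ are identified via $\det$ with $\Zhat^\times/\ker\gamma$, not with ``the image of $\det$''.
\item For $\det H^\gamma=\Zhat^\times$, first pick $g$ in the coset $\gamma(u)$, then fix its determinant by an element of $H$. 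Correcting within a fixed $H$-coset cannot change the coset.
\item The cocycle condition for $\sigma\mapsto\psi(\sigma)\circ\sigma$ holds for any homomorphism, because the $G/H$-action is defined over $\Q$. Abelianness is needed only for the conclusion.
\item Smooth compactifications of stacky curves are not unique: root stacks at the cusps give other ones. So either run the argument on $[\mathcal X(N)/K_N]$ with $\mathcal X(N)$ already compactified, or extend $\phi$, which lies over $\mathcal Y(1)$, by normalization over $\mathcal X(1)$.
\end{enumerate}
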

\begin{proof}
The fact that $\mathcal X_H^\psi=\mathcal X_{H^\psi}$ follows from \cite[Proposition 2.10]{zywina_open_image_computations}. The fact that $\mathcal X_H^\psi$ is an abelian cover of $\mathcal X_G$ now follows from the fact that $H^\psi\leq G$ and $G/H^\psi$ is abelian, which can be seen from the proof of \cite[Proposition 2.10]{zywina_open_image_computations}. 
\end{proof}

The level of the modular curve $\mathcal X_H^\psi$ divides the least common multiple of the level of $G$ and the conductor of $\psi$. 
This construction motivates the following definition.

\begin{definition}
    Let $H\trianglelefteq G\leq \GL_2(\Zhat)$ be open subgroups such that $G/H$ is abelian. We call 
    $$\mathcal F(\mathcal X_G, \mathcal X_H) := \{\mathcal X_H^\psi\mid \Gal_\Q\stackrel{\psi}{\longrightarrow} G/H \text{ a homomorphism}  \}$$
    a \textit{twist family of modular curves over $\mathcal X_G$}. 
\end{definition}

By Proposition \ref{prop:gps-curves}, there is an obvious bijection between $\mathcal F(\mathcal X_G, \mathcal X_H)$ and $\mathcal F(G, H)$ sending $\mathcal X_H^\psi$ to $H^\psi.$

\begin{lemma}\label{lem:fin_over_base}
Let $G\leq \GL_2(\Zhat)$ be an agreeable group. Then there exist only finitely many twist families $\mathcal F(\mathcal X_G, \mathcal X_H)$.
\end{lemma}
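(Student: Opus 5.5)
Fix an agreeable $G$. A twist family $\mathcal F(\mathcal X_G,\mathcal X_H)$ is determined by the subgroup $H$ with $H \trianglelefteq G$, $G/H$ abelian, and $\det(H)=\Zhat^\times$ (the standing assumption of Subsection~\ref{subsec: twists of modular curves}). Moreover, two subgroups in the same family give the same family: by Proposition~\ref{prop:gps-curves}, the family consists of all $\mathcal X_{H^\psi}$, and these are in bijection with $\mathcal F(G,H)$. So it suffices to show that the collection of families $\mathcal F(G,B)$, with $[G,G]\leq B\leq G$ and $\mathcal F(G,B)\neq\emptyset$, is finite up to the identification $\mathcal F(G,H)=\mathcal F(G,H')$ whenever $H'\in\mathcal F(G,H)$.

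The first step is to note that, by Definition~\ref{def:twist_family}, the family $\mathcal F(G,B)$ depends only on $B\cap\SL_2(\Zhat)$. Every admissible $H$ satisfies $[G,G]\leq H$, so
\[ [G,G]\leq H\cap \SL_2(\Zhat)\leq G\cap\SL_2(\Zhat). \]
Hence the number of twist families over $\mathcal X_G$ is at most the number of subgroups of $G\cap\SL_2(\Zhat)$ containing $[G,G]$.

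The second step is to show that $[G,G]$ has finite index in $G\cap\SL_2(\Zhat)$. Since $G$ is agreeable it is open, and Lemma~\ref{lem:agreeable}(5) together with Zywina's results gives that $[G,G]$ is an open subgroup of $\SL_2(\Zhat)$; its $\SL_2$-level $M$ is finite, which is exactly the integer appearing in Definition~\ref{def:agreeableclosure}. Therefore $[G,G]$ contains the kernel of $\SL_2(\Zhat)\to\SL_2(\Z/M\Z)$. It follows that the subgroups between $[G,G]$ and $G\cap\SL_2(\Zhat)$ correspond to subgroups of the finite group $(G\cap\SL_2(\Zhat))/[G,G]$, and there are only finitely many of these.

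The main obstacle is the openness of $[G,G]$ in $\SL_2(\Zhat)$. This is not automatic for an arbitrary open subgroup of $\GL_2(\Zhat)$: for example, the commutator subgroup of $\GL_2(\Zhat)$ itself has index $2$ in $\SL_2(\Zhat)$, but one must still rule out infinite index in general. I would first try to quote it directly from \cite{zywina_open_image_computations}, which is implicit in the definition of the $\SL_2$-level of $[G,G]$ used in Definition~\ref{def:agreeableclosure}. Failing that, I would argue directly. Since $G$ is open, it contains $\ker(\GL_2(\Zhat)\to\GL_2(\Z/N\Z))$ for some $N$. The group $[G,G]$ then contains the commutators of the principal congruence subgroup $\Gamma(N)$ of $\GL_2(\Zhat)$, and these generate an open subgroup of $\SL_2(\Zhat)$, as can be checked prime by prime using Lie algebra commutators $[\mathfrak{gl}_2,\mathfrak{gl}_2]=\mathfrak{sl}_2$ over $\Z_p$, with $\Gamma(N)$ equal to all of $\GL_2(\Z_p)$ for $p\nmid N$.
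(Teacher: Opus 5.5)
Your proposal is correct and follows the paper's proof: the twist family is determined by $H\cap\SL_2(\Zhat)$, this group lies between $[G,G]$ and $G\cap\SL_2(\Zhat)$, and finiteness follows because $[G,G]$ has finite index in $G\cap\SL_2(\Zhat)$. The paper simply asserts that finite index, whereas you also sketch why $[G,G]$ is open in $\SL_2(\Zhat)$. For the primes $p\nmid N$ your sketch implicitly uses that $[\GL_2(\Z_p),\GL_2(\Z_p)]=\SL_2(\Z_p)$ for all odd $p$ (for instance because elementary matrices are commutators there), and, like the paper, your argument never actually uses agreeability.
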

\begin{proof}
     Given a fixed open $G\leq \GL_2(\Zhat)$, the twist families $\mathcal F(\mathcal X_G,\mathcal X_H)$ are uniquely determined by $H\cap \SL_2(\Zhat)$. Furthermore, $H\cap \SL_2(\Zhat)$ has to satisfy $[G,G]\cap \SL_2(\Zhat)\leq H\cap \SL_2(\Zhat) \leq G\cap \SL_2(\Zhat)$. 
        It follows that there are finitely many possibilities for $H \cap \SL_2(\Zhat)$ since $[G,G]\cap \SL_2(\Zhat)$ is of finite index in $G\cap \SL_2(\Zhat)$. 
\end{proof}

\subsection{Examples}
\label{sec:examples}
We illustrate the definitions of twist families and twist-parameterized points in the following examples.

\begin{example}[Elliptic curves with isogenies]  Let $x= (E,G)\in X_0(n)(\Q)$ for some elliptic curve $E$ over $\Q$ and $G$ a cyclic subgroup of $E$ over $\Q$ of order $n$. Let $\psi':\Gal_\Q \rightarrow \torz{n}^\times$ be the isogeny character, and let $m$ be its conductor.
    The character $\psi'$ factors through a character $\psi: \Zhat^\times \rightarrow \torz{n}^\times$, via the Kronecker--Weber isomorphism $\Gal(\Q^{\rm ab}/\Q)\simeq \Zhat^\times$.
    
Since $\chi^{\cyc}=\det \rho_{E}$ by the Weil pairing, we have  $\psi(\chi^{\cyc}(\sigma))=\psi(\det \rho_{E}(\sigma))$. Hence 
   $$\rho_{E}(\Gal_\Q)\leq B_{0,\psi}(n):=\left\{A=\begin{pmatrix}
    a& b\\ c&d
\end{pmatrix} \ \middle|\  c \equiv 0 \hspace{-0.8em}\pmod n \text{ and }    a\equiv\psi(\det A) \hspace{-0.8em} \pmod m\right\} 
\leq 
\GL_2(\Zhat).$$
 Note that $B_{0,\psi}(n)$ is a subgroup of $B_0(n)$ of index $\varphi(n)$ and level  $\lcm(m,n)$. We see that our rational point on $X_0(n)$ lifts to a rational point on $\mathcal X_{0,\psi}(n) :=\mathcal X_{B_{0,\psi}(n)}$. One can easily see that $X_{0,\psi}(n)$ is a twist of $\mathcal X_1(n)$, as they are isomorphic over $\Q(\zeta_m)$. In particular, we have $g(\mathcal X_{B_{0,\psi}(n)})=g(\mathcal X_1(n))$.

 Note that $\mathcal X_{0,\psi}(n)$ is a modular curve, of level $\lcm(n,m)$. If $n$ is such that $g(X_0(n))=0$ and $g(\mathcal  X_1(n))\geq 2$, that is $n=13,16,18$ or $25$, then every $x\in X_0(n)(\Q)$ lifts to a rational isolated point on $\mathcal X_{0,\psi}(n)$, with $\psi$ varying. Hence we have a ``parameterized family" of isolated points on modular curves. 
 \end{example}

\begin{example}[Twist-parameterized points on twists of $X_{\ns}(2)$]\label{ex: twists of Xns(2)}
We now explain the (well-known fact) that $[\GL_2(\Zhat):G_E]$ is always divisible by $2$ in terms of twist-parameterized points.  Let $\Ns(2)$ be the unique subgroup of index 2 of $\GL_2(\torz{2})$, and let $X_{\ns}(2)$ be the modular curve corresponding to this group. For any elliptic curve $E$ over $\Q$, either $\rho_{E,2}(\Gal_\Q)\leq \Ns(2) $ (in which case $E$ corresponds to a rational point on $X_{\ns}(2)(\Q)$), or since $X_{\ns}(2)\rightarrow X(1)$ is abelian, it follows that $E$ corresponds to a rational point on a nontrivial twist $X_{\ns}(2)^\psi$ of $X_{\ns}(2)$. This $\psi$ is the quadratic character associated to $\Q(\sqrt{\Delta_E})$, where $\Delta_E$ is the discriminant of $E$ (see e.g. \cite[Section 3.1.]{greicius10}). In particular, we have that $\rho_{E}(\Gal_\Q)\leq H_\psi$, where $H_\psi$ is the \textit{Serre subgroup of $\GL_2(\Zhat)$ with character $\psi$}, defined by 
 $$H_\psi= \set{g\in \GL_2(\Zhat):\sgn (g)=\psi(\det g)}, $$
 where $\sgn: \GL_2(\Zhat) \rightarrow \set{\pm 1}$ is the map obtained by composing reduction modulo $2$ and the sign map on $\GL_2(\torz{2})\simeq S_3$ (see \cite[Section 2.2.]{greicius10}). Note that $H_\psi = \Ns(2)$ if $\psi$ is the trivial character.
\end{example}

\begin{example}\label{ex: genus two example}
    $X = X_{K_{71}}$ is the genus two curve \href{https://beta.lmfdb.org/ModularCurve/Q/18.54.2.a.1/}{18.54.2.a.1} with Weierstrass model
    \[y^2 =  6x^6 - 9x^5 - 18x^4 + 33x^3 + 9x^2 - 36x - 12 =: f(x).\]
    The group $T_{71} = \Aut(X) = \overline{T}_{71}$ is generated by the hyperelliptic involution $(x,y) \mapsto (x,-y)$, so the quotient $X_{M_{71}}$ is  isomorphic to $\PP^1$. Characters $\psi \colon \Gal_\Q \to T_{71} \simeq \Z/2\Z$ are in bijection with classes $d \in \Q^\times/\Q^{\times 2}$, where $\Q(\sqrt{d})$ is the fixed field corresponding to $\ker(\psi)$. The twist family is therefore $X^d \colon dy^2 = f(x)$. Each $t \in  \PP^1(\Q)$ lifts to a rational point on the twist $X^d$, where $d = f(t)$. Explicitly, $t$ lifts to the $T_{71}$-orbit of points $\{(t,1), (t,-1)\}$ on the curve $f(t)y^2 = f(x)$.
\end{example}

\begin{example}
    The curve $X_K := X_{K_{91}}$ from Theorem \ref{thm 1+2} is the genus 5 curve 
    \[ X_{\mathrm{ns}}(2) \times_{X(1)}X_{\mathrm{sp}}^+(3)\times_{X(1)} X_{S_4}(5) = \href{https://beta.lmfdb.org/ModularCurve/Q/30.60.5.b.1/}{30.60.5.b.1},\] 
    which is a $(\Z/2\Z)^2$-cover of  
    \[X_M = X_{\mathrm{ns}}^+(3) \times_{X(1)} X_{S_4}(5) = \href{https://beta.lmfdb.org/ModularCurve/Q/15.15.1.a.1/}{15.15.1.a.1.}\] 
    The map $X_K \to X_M$ is induced from the double covers $X_{\mathrm{sp}}^+(3) \to X_{\mathrm{ns}}^+(3)$ and $X_{\mathrm{ns}}(2) \to X(1)$. We view $X_M$ as the elliptic curve $y^2 = x^3 - 2160$ with identity the unique cusp. Note that $X_M$ admits a $\mu_3$-action $(x,y) \mapsto (\zeta_3 x, y)$ inherited  from the $\mu_3$-cover $X_{\mathrm{ns}}^+(3) \to X(1)$. 
    
    One of the intermediate covers $X_K \to Y \to X_M$ is a genus $3$ curve, namely \href{https://beta.lmfdb.org/ModularCurve/Q/30.30.3.a.1/}{30.30.3.a.1}. The double cover $Y \to X_M$ is branched along $\infty$ and a $\mu_3$-orbit of CM points with $j$-invariant $1728$. It follows that $Y$ is a bielliptic Picard curve,
    and indeed we find the affine model 
    \[Y \colon y^3 = x^4 + 11x^2 + 64.\] 
    Since $X_K \to Y$ is an \'etale double cover, \cite{BruinPryms} shows that $X_K$ has a model of the form 
    \[\{q_0 = w^2\}\cap\{q_1 = wt\} \cap \{q_2 = t^2\} \subset \PP^5,\] 
    for quadratic forms $q_0,q_1,q_2 \in \Q[x,y,z]$ such that $q_0q_2 - q_1^2$ is a multiple of $-y^3z + x^4 + 11x^2z^2 + 64z^4$. Indeed, we find the following affine model for $X_K$:
    \begin{align*}
        w^2 &= 4x^2 + y^2 + 4y + 16\\
        wt &=xy - 6x\\
        t^2 &=x^2 - 4y + 16
    \end{align*}
    The involutions generating $ \Aut(X_K) \simeq (\Z/2\Z)^2$ are 
    \[(x, y , w , t) \mapsto (-x , y , -w , t)\]
    \[(x, y , w, t) \mapsto (x , y , -w, -t)\] 
    The corresponding twist family $X^{d_1,d_2}_K$ is therefore given by 
    \begin{align*}
        d_1d_2w^2 &= 4d_1x^2 + y^2 + 4y + 16\\
        d_2wt &=xy - 6x\\
        d_2t^2 &=d_1x^2 - 4y + 16
    \end{align*}
    for $(d_1,d_2) \in \Q^\times$. The isomorphism class of $X_K^{d_1,d_2}$  depends only on the image of $(d_1,d_2)$ in 
    \[(\Q^{\times}/\Q^{\times 2}) \times (\Q^{\times}/\Q^{\times 2})  \simeq \Hom(\Gal_\Q, (\Z/2\Z)^2).\] 

    To parameterize the rational points in this family, note that $X_M(\Q)$ is free of rank $1$ generated by $P = (24 : 108 : 1)$. Given non-zero $n \in \Z$, we must specify which twist $X_K^{d_1,d_2}$ the point $nP = (x_0,y_0)$ lifts to, or equivalently, the biquadratic extension splitting the fiber of $X_K \to X_M$ above $nP$. The double cover $Y \to X_M$ is $(x,y) \mapsto (4y,8x^2+44)$ \cite[pg.\ 13]{LagaShnidman-BiellipticPicardCurves}, whereas the double cover $X_K \to Y$ is obtained by taking the root of the function $4x^2 - 16y + 64 = \frac12 y_0-4x_0 +42$ on $X_M$. The point $nP$ therefore lifts to a rational point on  $X_K^{2y_0 - 88,\frac12y_0 - 4x_0 + 42}$. For example $-3P = (321,5751) \in X_M(\Q)$ lifts to the rational point $(x,y,w,t) = (2,642,3,8)$ on $X^{11414,6}_K$.
\end{example}

\begin{question}
    In the previous two examples, when we order the characters $\chi$ by conductor, what is the limiting proportion of twists $X_K^\chi$ that contain a rational non-special point? It follows from \cite[Corollary 1.2 (ii)]{Granville07} that the proportion in Example $\ref{ex: genus two example}$ is $0$, assuming the abc conjecture.
\end{question}

\section{Twist-isolated points}\label{sec: twist isolated points}
In this section we prove that there are only finitely many twist-isolated points.

\subsection{A finiteness result}
Our main finiteness result depends on the following conjecture (which was originally posed as a problem by Serre in \cite{serre72}).

\begin{conjecture}[Serre's uniformity question]\label{SUC}
There exists a bound $C$ such that for every non-CM elliptic curve $E/\Q$ and every $\ell>C$, the mod $\ell$ representation $\rho_{E,\ell}$ is surjective.     
\end{conjecture}

A stronger and more explicit version of the conjecture is given by Sutherland \cite[Conjecture 1.1.]{sutherland} and Zywina \cite[Conjecture 1.12]{zywina_possible_images}.

\begin{conjecture}[Explicit version of Serre's uniformity question]\label{ESUC}
    If $E$ is a non-CM elliptic curve over $\Q$ and $\ell>13$ is a prime, then either $\rho_{E,\ell}(\Gal_\Q)=\GL_2(\Z/\ell \Z)$ or
    $$(\ell, j(E))\in \left\{ (17,-17^2\cdot 101^3/2), (17, -17\cdot 373^3/2^{17}), (37, -7\cdot 11^3), (37, -7\cdot 137^3\cdot 2083^3) \right\}.$$
\end{conjecture}

We will need the following theorem. 

\begin{theorem}[{\cite[Theorem 1.9]{zywina_explicit_images}}]\label{thm:zywina-finite-Q} Let $\mathcal L =\{2,3,5,7,11,13,17,37\}$. There exist finite and explicitly computable sets $A_{ \infty}$ and $A_{\rm finite}$ consisting of agreeable subgroups of $\GL_2(\Zhat)$ such that
\begin{itemize}
    \item[(a)] for all $H \in A_{ \infty}$ one has $\#X_H(\Q) = \infty$, and for all $H \in A_{\rm finite}$ one has $\#X_H(\Q) <\infty$.
    \item[(b)] if $H$ is agreeable and $\#X_H(\Q)=\infty$ then $H$ is conjugate to a unique group in $A_{ \infty}$.
    \item[(c)] if $H$ is agreeable, the level of $H$ is divisible only by primes in $\mathcal L$, and $X_H(\Q)<\infty$, then $H$ is conjugate to a subgroup of a group in $A_{\rm finite}$.
\end{itemize} 
\end{theorem}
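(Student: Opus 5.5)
Since this is Zywina's result, the plan below reconstructs his argument. It is organised as a finite search over the lattice of agreeable subgroups, ordered by inclusion. The starting observation is that $\GL_2(\Zhat)$ itself is agreeable and $\#X_{\GL_2(\Zhat)}(\Q)=\infty$.

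\emph{The set $A_\infty$.} By Faltings' theorem, $\#X_H(\Q)=\infty$ forces $g(X_H)\le 1$. The gonality of $X_H$ is bounded below by a constant multiple of the index of $H\cap\SL_2(\Zhat)$ in $\SL_2(\Zhat)$ (Abramovich's bound $\gon\ge \tfrac{7}{800}[\SL_2(\Zhat):H\cap\SL_2(\Zhat)]$). A curve of genus at most $1$ has gonality at most $2$, so every agreeable $H$ with infinitely many rational points has index bounded by an explicit constant $B$. Agreeable groups have surjective determinant, so this is also the index in $\GL_2(\Zhat)$. By Proposition~\ref{prop: bounded index} there are only finitely many such $H$ up to conjugacy. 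The steps are then as follows.
\begin{enumerate}
\item Enumerate all agreeable groups of index at most $B$. Concretely, run through the finitely many open $\Gamma\le\SL_2(\Zhat)$ of index at most $B$ (whose levels are bounded). For each $\Gamma$, list the finitely many agreeable $H$ with $H\cap\SL_2(\Zhat)=\Gamma$, using the structure from Lemma~\ref{lem:agreeable}, in particular part~(6), which bounds the $\GL_2$-level in terms of the level of the commutator.
\item Compute the genus of each $X_H$ and discard those with $g\ge2$.
\item For genus $0$, decide whether $X_H(\Q)\neq\emptyset$, using local conditions together with an explicit conic model.
\item For genus $1$, find a rational point and compute the Mordell--Weil rank of the Jacobian.
\end{enumerate}
Keeping one representative per conjugacy class of the survivors gives $A_\infty$, which proves (a) for $A_\infty$ and (b).

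\emph{The set $A_{\rm finite}$.} Define $A_{\rm finite}$ as the set of agreeable $G$ with the following two properties.
\begin{itemize}
\item The level of $G$ is divisible only by primes in $\mathcal L$, and $\#X_G(\Q)<\infty$.
\item $G$ is a maximal agreeable subgroup of some $H\in A_\infty$.
\end{itemize}
Part (c) then follows by walking up the lattice. Given agreeable $H'$ with $\mathcal L$-level and $\#X_{H'}(\Q)<\infty$, take a maximal chain of agreeable groups from $H'$ up to $\GL_2(\Zhat)$, all with $\mathcal L$-level; this is possible by taking agreeable closures of intermediate groups. Let $H$ be the first group in the chain with infinitely many rational points. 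Its predecessor $G$ is then a maximal agreeable subgroup of a conjugate of an element of $A_\infty$, has finitely many points, and contains $H'$.

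\emph{Main obstacle: finiteness of $A_{\rm finite}$.} It remains to show that a fixed agreeable $H$ has only finitely many maximal agreeable subgroups $G$ whose level is supported on $\mathcal L$, and that these can be listed. I would first use Lemma~\ref{lem:agreeable}(5),(6). These reduce the problem to controlling the $\SL_2$-level of $[G,G]$, and hence of $G\cap\SL_2(\Zhat)$. Next I would use a Frattini-type argument prime by prime: for $p\in\mathcal L$, the $p$-part of the level of a maximal proper subgroup increases by at most a bounded factor (typically $p$, or $p^2$ at $p=2,3$). This is because a maximal subgroup of an open subgroup of $\GL_2(\Z_p)$ contains the Frattini subgroup, which contains the kernel at level $p\cdot N$ once $N$ is large. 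Since only primes in $\mathcal L$ can occur, the level of $G$ divides an explicit bounded multiple of the level of $H$. Finitely many groups then remain, and they are enumerated by computing maximal subgroups in $\GL_2(\Z/N\Z)$ and taking agreeable closures. The remaining delicate points are the small primes $2,3$, where $\SL_2(\Z_p)$ is not pro-$p$ and the lifting behaviour is irregular. These will likely need a direct computation.
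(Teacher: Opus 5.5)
There is a genuine gap, and it sits in the step you flag as the main obstacle. The rest of the architecture is sound. That covers $A_\infty$, the definition of $A_{\rm finite}$ as maximal agreeable subgroups of members of $A_\infty$ with $\mathcal L$-level and finitely many points, and the chain argument for (c). In (c) you do not even need agreeable closures: every overgroup of $H'$ has level dividing that of $H'$.

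\textbf{The gap.} Your Frattini argument bounds the level of a \emph{maximal subgroup} of $H$. The members of $A_{\rm finite}$, however, are maximal \emph{agreeable} subgroups, and these need not be maximal subgroups. The groups strictly between $G$ and $H$ may all be non-agreeable, and nothing in your argument bounds how many Frattini steps separate $G$ from $H$. So no level bound, and no finiteness, follows.

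This really occurs. Let $G$ be the group 6.6.0.b.1, the preimage of the graph of $\GL_2(\F_3)\to S_3\simeq \GL_2(\F_2)$.
\begin{itemize}
\item[(i)] $G$ is agreeable and is a maximal agreeable subgroup of $\GL_2(\Zhat)$.
\item[(ii)] The only group strictly between $G$ and $\GL_2(\Zhat)$ is $W=\{g : \sgn(g)=\chi(\det g)\}$, where $\chi$ is the quadratic character of conductor $3$.
\item[(iii)] $W$ is not agreeable: $[W,W]=[\GL_2(\Zhat),\GL_2(\Zhat)]$ has level $2$, while $W$ has level $6$.
\end{itemize}
The same example shows that a Frattini argument run ``prime by prime'' on the projections $G_p$ cannot see the level at all. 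Both $G_2$ and $G_3$ are surjective, yet $G$ has level $6$. The bound must be proved for the whole group, inside $\GL_2(\prod_{p\in\mathcal L}\Z_p)$. There one uses $\Phi(W)\supseteq\Phi(K)$ for the normal pro-nilpotent subgroup $K=W\cap\ker(\GL_2\to\GL_2(\Z/N\Z))$, with $N$ divisible by every $p\in\mathcal L$.

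\textbf{The missing idea: the intermediate group is itself controlled.} Let $G$ be maximal agreeable in $H$, and let $W$ be any group with $G$ maximal in $W\le H$.
\begin{itemize}
\item[(i)] By Lemma~\ref{lem:agreeable}(1),(3), $W^{\ag}$ is agreeable and $G\subsetneq W^{\ag}\le H$. Maximality of $G$ forces $W^{\ag}=H$.
\item[(ii)] Lemma~\ref{lem:agreeable}(5) then gives $[H,H]=[W,W]\le W$.
\item[(iii)] Since $\det W=\Zhat^\times$, $W$ lies in one of the finitely many families $\mathcal F(H,B)$ of Definition~\ref{def:twist_family}, where $B=W\cap\SL_2(\Zhat)$ and $[H,H]\le B\le H\cap\SL_2(\Zhat)$.
\item[(iv)] Fix a member $W_0$ of such a family with $\mathcal L$-level. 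The other members are the $W_0^{\chi}$, with $\chi\colon\Zhat^\times\to H/W_0$ and $H/W_0$ finite.
\item[(v)] Since $W\supseteq G$ has $\mathcal L$-level, $\chi$ is trivial on $\prod_{p\notin\mathcal L}\Z_p^\times$. Hence only finitely many $W$ occur, each with explicitly bounded conductor and level.
\end{itemize}
Only now does your Frattini argument apply, to $G$ as a maximal subgroup of one of these finitely many $W$. That gives the explicit level bound that makes $A_{\rm finite}$ finite and computable.

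\textbf{A smaller slip of the same kind} occurs in your enumeration of $A_\infty$. Lemma~\ref{lem:agreeable}(6) bounds the level of $H$ by the level of $[H,H]$, which depends on $H$ and not only on $\Gamma=H\cap\SL_2(\Zhat)$. You need the inclusion $[H,H]\supseteq[\Gamma,\Gamma]$, whose level is computable from $\Gamma$.
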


\begin{remark} \label{rem:exc_j} Zywina (see \cite[\S 10.2]{zywina_possible_images} and \cite[p.4]{zywina_explicit_images}) defines \textit{exceptional} $j$-invariants as those that are images under the $j$-map of some non-CM non-cuspidal $x\in X_G(\Q)$, where $G$ is agreeable and $X_G(\Q)$ is finite.

He found a set $J_{\rm exc}$ of 77 $j$-invariants\footnote{Zywina in \cite[p.4 and Section 12.3]{zywina_explicit_images} states there are 81 such values, but for 4 of the values $j\in\{68769820673/16, 78608, 2048, 16974593/256\}$, we find $\# X_{G_j^{\ag}}(\Q)=\infty$.} 
corresponding to non-special $x\in X_H(\Q)$ for $ H \in A_{\rm finite}$, and conjectured that there are no other, see the last paragraph of \cite{zywina_explicit_images}. See Table \ref{table:exc}.
\end{remark}

\begin{table}[h]
\centering
\begin{tabular}{llll}
$\frac{-5^{2}}{2}$ & $-2^{2} \cdot 3^{2}$ & $\frac{-2^{9} \cdot 3^{3} \cdot 5^{3} \cdot 13 \cdot 71^{3} \cdot 181^{3}}{43^{11}}$ & $\frac{3^{3} \cdot 5^{3}}{2^{6}}$ \\
$2^{6}$ & $\frac{3^{3} \cdot 29^{3} \cdot 599^{3}}{2^{24} \cdot 7^{6}}$ & $\frac{3^{3} \cdot 13}{2^{2}}$ & $-11^{2}$ \\
$\frac{11^{3}}{2^{3}}$ & $-2^{3} \cdot 3^{3}$ & $\frac{-2^{18} \cdot 3 \cdot 5^{3} \cdot 13^{3} \cdot 41^{3} \cdot 107^{3}}{17^{16}}$ & $2^{4} \cdot 3^{3}$ \\
$\frac{-3^{2} \cdot 5^{3} \cdot 101^{3}}{2^{21}}$ & $\frac{2^{15} \cdot 3^{3} \cdot 5^{3} \cdot 31^{3} \cdot 41^{3} \cdot 47^{3} \cdot 83^{3} \cdot 293^{3}}{23^{24}}$ & $\frac{-2^{6} \cdot 71^{3} \cdot 179^{3}}{3^{6} \cdot 5^{12}}$ & $\frac{5 \cdot 59^{3}}{2^{10}}$ \\
$\frac{2^{6} \cdot 5^{3} \cdot 17^{3} \cdot 5059^{3}}{11^{15}}$ & $\frac{5 \cdot 211^{3}}{2^{15}}$ & $\frac{3^{3} \cdot 5^{3}}{2}$ & $\frac{3^{2} \cdot 23^{3}}{2^{6}}$ \\[3pt]
$\frac{2^{6} \cdot 31^{3}}{3^{6}}$ & $\frac{-3^{3} \cdot 5^{3} \cdot 47^{3} \cdot 1217^{3}}{2^{8} \cdot 31^{8}}$ & $\frac{-5 \cdot 29^{3}}{2^{5}}$ & $2^{12}$ \\
$2 \cdot 3^{7}$ & $17^{3}$ & $-2^{3} \cdot 5^{4}$ & $\frac{-17 \cdot 373^{3}}{2^{17}}$ \\
$\frac{2^{4} \cdot 5 \cdot 13^{4} \cdot 17^{3}}{3^{13}}$ & $\frac{-3^{3} \cdot 11^{3}}{2^{2}}$ & $-7 \cdot 11^{3}$ & $\frac{2^{16} \cdot 3^{3} \cdot 17}{5^{5}}$ \\
$-2^{9} \cdot 3^{3}$ & $\frac{3^{3} \cdot 227^{3} \cdot 1367^{3}}{2^{12} \cdot 7^{12}}$ & $\frac{-3^{2} \cdot 5^{6}}{2^{3}}$ & $\frac{3^{3} \cdot 5 \cdot 7^{5}}{2^{7}}$ \\
$\frac{-29^{3} \cdot 41^{3}}{2^{15}}$ & $\frac{-3^{3} \cdot 37^{3} \cdot 47}{2^{10}}$ & $-2^{10} \cdot 3^{4}$ & $\frac{-2^{12} \cdot 5^{3} \cdot 11 \cdot 13^{4}}{3^{13}}$ \\
$2^{12} \cdot 3^{3}$ & $\frac{-2^{6} \cdot 5^{4} \cdot 19^{3} \cdot 263 \cdot 5113^{3}}{7^{20}}$ & $-2^{10} \cdot 3^{3} \cdot 5$ & $-2^{4} \cdot 3^{2} \cdot 13^{3}$ \\
$\frac{-3^{3} \cdot 5^{6} \cdot 199^{3} \cdot 809^{3} \cdot 5059^{3}}{61^{15}}$ & $2^{6} \cdot 3^{8}$ & $\frac{-5^{2} \cdot 41^{3}}{2^{2}}$ & $\frac{2^{10} \cdot 3^{2} \cdot 79^{3}}{5^{5}}$ \\
$\frac{2^{6} \cdot 223^{3} \cdot 277^{3}}{3^{12} \cdot 5^{6}}$ & $\frac{-3^{3} \cdot 13 \cdot 479^{3}}{2^{14}}$ & $\frac{2^{6} \cdot 109^{3}}{3^{3}}$ & $\frac{3^{3} \cdot 5^{6} \cdot 13^{3} \cdot 23^{3} \cdot 41^{3}}{2^{16} \cdot 31^{4}}$ \\[3pt]
$\frac{-2^{21} \cdot 3^{3} \cdot 5^{3} \cdot 7 \cdot 13^{3} \cdot 23^{3} \cdot 41^{3} \cdot 179^{3} \cdot 409^{3}}{79^{16}}$ & $257^{3}$ & $-2^{6} \cdot 3^{3} \cdot 23^{3}$ & $-11 \cdot 131^{3}$ \\
$\frac{-5^{2} \cdot 241^{3}}{2^{3}}$ & $-2^{12} \cdot 3^{7} \cdot 5$ & $\frac{2^{9} \cdot 3^{3} \cdot 13^{3} \cdot 167^{3} \cdot 1151^{3}}{11^{15}}$ & $\frac{-3^{3} \cdot 5^{4} \cdot 11^{3} \cdot 17^{3}}{2^{10}}$ \\
$\frac{-17^{2} \cdot 101^{3}}{2}$ & $\frac{2^{18} \cdot 3^{3} \cdot 13^{4} \cdot 127^{3} \cdot 139^{3} \cdot 157^{3} \cdot 283^{3} \cdot 929}{5^{13} \cdot 61^{13}}$ & $2^{15} \cdot 7^{5}$ & $-3^{3} \cdot 5^{4} \cdot 11 \cdot 19^{3}$ \\
$\frac{-3^{3} \cdot 5^{3} \cdot 383^{3}}{2^{7}}$ & $\frac{2^{12} \cdot 3^{3} \cdot 5^{7} \cdot 29^{3}}{7^{5}}$ & $2^{3} \cdot 3^{8} \cdot 67^{3}$ & $-2^{6} \cdot 719^{3}$ \\[3pt]
$2^{12} \cdot 211^{3}$ & $2^{4} \cdot 3^{2} \cdot 5^{7} \cdot 23^{3}$ & $2^{6} \cdot 3^{10} \cdot 17 \cdot 73^{3}$ & \\
$-2^{2} \cdot 3^{7} \cdot 5^{3} \cdot 439^{3}$ &
$\frac{3^{3} \cdot 11^{3} \cdot 107^{3} \cdot 521^{3}}{2^{15}}$ & $\frac{-3^{3} \cdot 29^{3} \cdot 107^{3} \cdot 199^{3}}{2^{15}}$ &\\
$-7 \cdot 137^{3} \cdot 2083^{3}$ & 
$2^{6} \cdot 11^{3} \cdot 23^{3} \cdot 149^{3} \cdot 269^{3}$ &
$2^{9} \cdot 17^{6} \cdot 19^{3} \cdot 29^{3} \cdot 149^{3}$ &
\end{tabular}
\caption{The 77 exceptional $j$-invariants in $J_{\rm exc}$}
\label{table:exc}
\end{table}

\begin{conjecture}[Zywina]\label{conj: Zywina}
    Let $H \in A_{\rm finite}$. If $x\in X_H(\Q)$  is a non-special point, then $j(x)\in J_{\rm exc}.$ 
\end{conjecture}

\begin{remark}
Proving Zywina's conjecture, under the assumption of Conjecture \ref{ESUC}, amounts to determining the rational points on a finite set of curves of genus $\geq 2$. 
\end{remark}

Our results will depend on both Conjectures \ref{ESUC} and \ref{conj: Zywina}. For easier reference, we combine them into one conjecture. 
\begin{conjecture}[Galois images conjecture] 
\label{conj: GIC}
     Let $H\leq \GL_2(\Zhat)$ be an agreeable subgroup such that $\#X_H(\Q)<\infty$, and let $x\in X_H(\Q)$ be a non-special point. Then $j(x)\in J_{\rm exc}$.
\end{conjecture}

Conjecture \ref{conj: GIC} is equivalent to Conjecture \ref{ESUC} and Conjecture \ref{conj: Zywina} combined. 
We first show these conjectures imply Conjecture \ref{conj: GIC}, then show the converse.
Conjecture \ref{ESUC} implies that if \(H\) is agreeable and \(X_H\) has a non-special rational point \(x\) then the level of $H$ is divisible only by primes $\ell \in \{2,3,5,7,11,13, 17, 37\}$. If additionally \(\#X_H(\Q)<\infty\), then \(H\in A_{\rm finite}\). Conjecture \ref{conj: Zywina} then implies that \(j(x)\in J_{\rm exc}\).  

To see that Conjecture \ref{conj: GIC} implies Conjecture \ref{ESUC}, we prove the contrapositive. 
Assume that there exists a non-CM elliptic curve $E$ over $\Q$ with non-surjective mod $\ell$ image for some $\ell>13$ and $j(E) \notin \{-17^2\cdot 101^3/2,  -17\cdot 373^3/2^{17},  -7\cdot 11^3, -7\cdot 137^3\cdot 2083^3\}$. This would induce a rational point on an agreeable $X_H$ where $H = G_E^{\ag}$. 
Furthermore, $\#X_H(\Q)<\infty$ because $H$ is contained in a maximal subgroup $M$ with $\ell \mid N(M)$, so either $M$ is the normalizer of a split or non-split Cartan with level $>13$, an exceptional group of level $>13$, or Borel of level $>13$ and  $\neq 17, 37$ (because of the restrictions on $j(E)$).
Thus we have $H \notin A_{\rm finite}$.
This  implies that $j(E)\notin J_{\rm exc}$.

The set $A_{\infty}$ has been explicitly computed in \cite{zywina_explicit_images} and it  contains $454$ groups. For exactly $315$ of these groups, the modular curve $X_H$ is isomorphic to $\PP^1$, and for the other $139$ groups, the modular curve $X_H$ is isomorphic to a positive rank elliptic curve.

\section{Algorithm for checking twist isolatedness}
\label{sec:algorithm}

From what has been proven so far, it is not hard to see that there exists an algorithm to determine whether a given $\mathcal X_G$ is twist parameterized. Note that to show that a $j$-invariant $j$ is twist isolated, it is not enough to check that $\mathcal X_{G_j}$ is twist isolated. It is possible that $\mathcal X_{G_j}$ is twist parameterized over some $\mathcal X_H$, while for some $G_j\leq G\leq \GL_2(\Zhat)$, the curve $\mathcal X_{G}$ is twist isolated. 

\begin{theorem}
    There exist algorithms that
    \begin{itemize}
        \item[a)] for a given open subgroup $G\leq \GL_2(\Zhat)$ check whether $\mathcal X_G$ is twist parameterized. 
        \item[b)] for a given $j$-invariant $j_0\in \Q$ test whether $j_0$ is a twist-parameterized $j$-invariant. 
    \end{itemize}
\end{theorem}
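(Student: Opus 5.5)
The idea is to reduce both questions to finitely many group-theoretic checks inside $\GL_2(\Zhat)$, combined with membership tests in the explicit finite list $A_\infty$ from Theorem \ref{thm:zywina-finite-Q}.

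For part a), let $G$ be open. By Definition \ref{def:twist_parameterized} and Proposition \ref{prop:tp_group_condition}(1), $\mathcal X_G$ is twist parameterized if and only if there is an open $H$ with $[H,H]\leq G\leq H$ and $\#\mathcal X_H(\Q)=\infty$. By Proposition \ref{prop:tp_group_condition}(2) we may replace $H$ by $H^{\ag}$. Since $[H^{\ag},H^{\ag}]=[H,H]$ by Lemma \ref{lem:agreeable}(5), the condition $[H^{\ag},H^{\ag}]\leq G$ still holds, and so does $G\leq H\leq H^{\ag}$. So it suffices to search over agreeable $H$ with $\#X_H(\Q)=\infty$. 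By Theorem \ref{thm:zywina-finite-Q}(b), these are exactly the conjugates of the finitely many groups in $A_\infty$. The algorithm therefore runs through each $H_0\in A_\infty$ and decides whether some conjugate $gH_0g^{-1}$ satisfies $[gH_0g^{-1},gH_0g^{-1}]\leq G\leq gH_0g^{-1}$. This is equivalent to $[H_0,H_0]\leq g^{-1}Gg\leq H_0$.

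The main obstacle is making this conjugacy search finite. Let $N$ be the lcm of the level of $G$ and the levels of $H_0$ and of $[H_0,H_0]$. The last level is finite because $[H_0,H_0]$ is open in $\SL_2(\Zhat)$, by Zywina's results on agreeable groups, and it can be computed explicitly. All three groups contain the kernel of reduction mod $N$, so the inclusions can be checked in $\GL_2(\Z/N\Z)$. The conjugator $g$ then only matters modulo $N$, so one loops over the finite group $\GL_2(\Z/N\Z)$, or over its conjugacy classes of subgroups, and checks the inclusions. This is a finite computation. The only delicate input is computing the $\SL_2$-level of $[H_0,H_0]$, and here I would use Lemma \ref{lem:agreeable}(6) and Zywina's algorithms for commutator subgroups.

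For part b), let $j_0$ be non-CM. Then $j_0$ is twist isolated if and only if some twist-isolated $\mathcal X_G$ has a rational point above $j_0$. Such a point forces $G_E\leq G$ up to conjugacy for a suitable quadratic twist $E$ with $j(E)=j_0$; in fact $\pm G_E=G_{j_0}\leq \pm G$, with the sign handled by stacky rational points. There are two steps.

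\textbf{Step 1: reduce to finitely many $G$.} Being twist parameterized is inherited upward. Suppose $\mathcal X_G$ is twist parameterized over $H$ with $[H,H]\leq G\leq H$, and let $G'\geq G$. Then $\mathcal X_{G'}$ is twist parameterized over $\langle G',H\rangle$, provided the latter is abelian over $G'$ and has infinitely many points. I would instead use the cleaner criterion from Proposition \ref{prop:tp_group_condition}(3),(4),(8). Since $[G,G]\supseteq [G_E,G_E]$, only finitely many intermediate groups between $G_E$ and $\GL_2(\Zhat)$ need checking.

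\textbf{Step 2: compute $G_E$ and test.} First compute $G_E$ for an elliptic curve $E$ with $j(E)=j_0$, using Zywina's algorithm \cite{zywina_explicit_images}. Then enumerate the finitely many open $G$ with $G_E\leq G$; these are finite in number since $G_E$ is open. For each such $G$ and each quadratic twist relevant modulo $\pm$, apply algorithm a). Declare $j_0$ twist isolated exactly when some such $G$ is twist isolated.
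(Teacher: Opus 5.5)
The gap is in part b), in how you handle quadratic twists; part a) is correct and matches the paper. In a) you replace a witness $H$ by $H^{\ag}$ via Proposition~\ref{prop:tp_group_condition}(2) and Lemma~\ref{lem:agreeable}(5), then search $A_\infty$ up to conjugacy. You test the defining condition $[H,H]\le G\le H$ directly, whereas the paper phrases the test as $[H,H]=[G^{\ag},G^{\ag}]$ with $G^{\ag}\le H$.

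\textbf{The gap in b).} A rational point above $j_0$ on $\mathcal X_G$ with $-I\notin G$ only gives $G_{E^d}\le G$, up to conjugacy, for \emph{some} twist $E^d$. When $-I\notin G_E$, the groups $G_{E^d}$ have unbounded level as $d$ varies. So infinitely many non-conjugate groups carry rational points over $j_0$. Your instruction ``each quadratic twist relevant modulo $\pm$'' is therefore not a finite procedure until you explain why finitely many twists suffice. If you only test the overgroups of one fixed $G_E$, the procedure is finite, but its correctness is unproved.

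\textbf{The missing observation.} By your own criterion from a), whether $\mathcal X_G$ is twist parameterized depends only on the pair $(\pm G,\ G\cap\SL_2(\Zhat))$:
an agreeable witness $H$ contains $-I$, so $G\le H$ iff $\pm G\le H$; and $[H,H]\le\SL_2(\Zhat)$, so $[H,H]\le G$ iff $[H,H]\le G\cap\SL_2(\Zhat)$.
Now let $\chi_d\colon\Zhat^\times\to\{\pm1\}$ be the character of $\Q(\sqrt d)$ and consider the automorphism $\tau_d\colon g\mapsto\chi_d(\det g)\,g$ of $\GL_2(\Zhat)$. It commutes with conjugation and preserves both $\pm G$ and $G\cap\SL_2(\Zhat)$. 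It also satisfies $\tau_d(G_{E^d})=G_E$, because $\rho_{E^d}=(\chi_d\circ\det\circ\rho_E)\cdot\rho_E$. Hence every $G$ carrying a rational point over $j_0$ has the same status as $\tau_d(G)$, which contains a conjugate of $G_E$. So it suffices to test the finitely many overgroups of a single $G_E$.

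The paper avoids the issue differently. It uses Proposition~\ref{prop:tp_group_condition}(8) to reduce to the finitely many overgroups of $G_{j_0}^{\ag}$, which is independent of the twist by Proposition~\ref{prop:tp_group_condition}(6). So the paper never needs $G_E$ itself; your patched route needs the full image but does not need (8).

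\textbf{Two smaller points.}
\begin{enumerate}
\item In Step 1 you claim that being twist parameterized is inherited by overgroups. This is unjustified; your own attempted justification needs an extra hypothesis. The paper explicitly warns that $\mathcal X_{G_j}$ can be twist parameterized while some $\mathcal X_G$ with $G_j\le G$ is twist isolated, which is exactly why the definition of a twist-isolated $j$-invariant quantifies over all overgroups. You do not end up using the claim, so delete it. Also, finiteness of the overgroups of $G_E$ comes from $G_E$ being open, not from $[G,G]\supseteq[G_E,G_E]$.
\item Lemma~\ref{lem:agreeable}(6) bounds the level of $G^{\ag}$ in terms of the level of $[G,G]$, not conversely. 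It does not give you the level of $[H_0,H_0]$, so that step rests entirely on the commutator-subgroup algorithm you cite.
\end{enumerate}
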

\begin{proof}
    To check whether $\mathcal X_G$ is twist parameterized, by Proposition \ref{prop:tp_group_condition} (1), (2) and (8), we need to check whether there exists an agreeable overgroup $H$ of $G^{\ag}$, with $[H,H]=[G^{\ag},G^{\ag}]$ and $\mathcal X_H(\Q)=\infty$. This can be done by explicitly checking whether there exists an $H$ from $A_{\inf}$ from Theorem \ref{thm:zywina-finite-Q} satisfying the condition $[H,H]=[G^{\ag},G^{\ag}]$.

    Now let $j_0 \in \Q$ be a $j$-invariant. By definition, $j_0$ is twist parameterized if and only if $\mathcal X_{G_{j_0}}$ is twist parameterized and all images of $\mathcal X_{G_{j_0}}$ under modular maps are twist parameterized. This is again a finite computation, and by applying Proposition \ref{prop:tp_group_condition} (8) we can see that it is enough to check whether all modular curves $\mathcal X_G$ for $G_{j_0}^{\ag} \leq G \leq \GL_2(\Zhat)$ are twist parameterized.
    
    is twist parameterized and all images of $\mathcal X_{G_{j_0}^{\ag}}$ under modular maps are twist parameterized.    
\end{proof}

We have implemented these algorithms, and they work very efficiently in practice. The key observation that makes this algorithm usable in practice is Proposition \ref{prop:tp_group_condition} (8) which allows one to work with the agreeable closure $G^{\ag}$ of a group $G$ instead of $G$.

\subsection{Determining the twist-isolated \texorpdfstring{$j$}{j}-invariants} \label{sec:twist-isolated_j}

 Any twist-isolated $j$-invariant will necessarily be exceptional (as defined in Remark \ref{rem:exc_j}). However, being twist isolated is a stronger property than being exceptional: an exceptional point can be twist parameterized. As an example, consider the $j$-invariant $\frac{-631595585199146625}{218340105584896}$. It is exceptional, but twist parameterized on all $\mathcal X_{G}$ with $G_j\leq G\leq\GL_2(\Zhat)$. On the other hand, if $x \in \mathcal X_{G_j}(\Q)$  is twist isolated then $\mathcal X_{G_j^{\ag}}(\Q)$ is necessarily finite (otherwise, the point on $\mathcal X_{G_j}$ would be twist parameterized over $\mathcal X_{G_j^{\ag}}$).

 Hence, assuming Conjecture \ref{conj: GIC}, to find all the twist-isolated $j$-invariants we need to determine which $j\in J_{\rm exc}$ are twist isolated.
 We \gitlink{twist_isolated_points.m}{compute} that out of these 77 $j$-invariants, 41 are twist isolated, while the rest are twist parameterized.

Table \ref{table:twist isolated} lists all the twist-isolated $j$-invariants and the genera of their agreeable closures.

\begin{longtable}{llllll}
\hline
$j$-invariant &  LMFDB link &  Agreeable curve &Agree. gen. & Adelic gen. & $i$   \\
\hline
$-2^{10}\cdot 3^{3}\cdot 5$ & \href{https://www.lmfdb.org/EllipticCurve/Q/1800.o1}{1800.o1} & \href{https://beta.lmfdb.org/ModularCurve/Q/12.24.1.h.1/}{12.24.1.h.1} &  $1$ & $1$ & 139 \\ 
$-2^{10}\cdot 3^{4}$ & \href{https://www.lmfdb.org/EllipticCurve/Q/1944/c/1}{1944.c1} & \href{https://beta.lmfdb.org/ModularCurve/Q/12.24.1.g.1/}{12.24.1.g.1}& $1$ & $1$ & 140 \\ 
$-\frac{3^{3}\cdot 11^{3}}{2^{2}}$ & \href{https://www.lmfdb.org/EllipticCurve/Q/162.a1}{162.a1} & \href{https://beta.lmfdb.org/ModularCurve/Q/12.32.1.b.1/}{12.32.1.b.1} & $1$ & $1$ & 141 \\ 
$\frac{3^{2}\cdot 23^{3}}{2^{6}}$ & \href{https://www.lmfdb.org/EllipticCurve/Q/162.a2}{162.a2} & \href{https://beta.lmfdb.org/ModularCurve/Q/12.32.1.b.1/}{12.32.1.b.1} & $1$ & $1$ &141  \\
$\frac{3^{3}\cdot 5^{3}}{2^{6}}$ & \href{https://www.lmfdb.org/EllipticCurve/Q/6534.g1}{6534.g1} & \href{https://beta.lmfdb.org/ModularCurve/Q/12.48.1.q.1/}{12.48.1.q.1} & $1$ & $5$ & 142 \\ 
$-\frac{5^{2}\cdot 41^{3}}{2^{2}}$ & \href{https://www.lmfdb.org/EllipticCurve/Q/14450.y1}{14450.y1} & \href{https://beta.lmfdb.org/ModularCurve/Q/20.24.1.g.1/}{20.24.1.g.1} & $1$ & $5$ & 143 \\ 
$\frac{5\cdot 59^{3}}{2^{10}}$ & \href{https://www.lmfdb.org/EllipticCurve/Q/14450.j2}{14450.j2} & \href{https://beta.lmfdb.org/ModularCurve/Q/20.24.1.g.1/}{20.24.1.g.1} & $1$ & $5$ & 143 \\ 
$ -\frac{17^{2}\cdot 101^{3}}{2}$ & \href{https://www.lmfdb.org/EllipticCurve/Q/14450.b2}{14450.b2} & \href{https://beta.lmfdb.org/ModularCurve/Q/17.18.1.a.1/}{$X_0(17)$} &$1$ & $17$ & 144 \\ 
$-\frac{17\cdot 373^{3}}{2^{17}}$ & \href{https://www.lmfdb.org/EllipticCurve/Q/14450.o2}{14450.o2} & \href{https://beta.lmfdb.org/ModularCurve/Q/17.18.1.a.1/}{$X_0(17)$} &$1$ & $17$ & 144\\ 
$-2^{4}\cdot 3^{2}\cdot 13^{3}$ & \href{https://www.lmfdb.org/EllipticCurve/Q/324.b1}{324.b1} & \href{https://beta.lmfdb.org/ModularCurve/Q/60.40.2.a.1/}{60.40.2.a.1} & $2$ & $4$ & 145 \\ 
$2^{4}\cdot 3^{3}$ & \href{https://www.lmfdb.org/EllipticCurve/Q/324.b2}{324.b2} & \href{https://beta.lmfdb.org/ModularCurve/Q/60.40.2.a.1/}{60.40.2.a.1} & $2$ & $4$ & 145 \\ 
$-\frac{2^{21}\cdot 3^{3}\cdot 5^{3}\cdot 7\cdot 13^{3}\cdot 23^{3}\cdot 41^{3}\cdot 179^{3}\cdot 409^{3}}{79^{16}}$ & not in lmfdb & \href{https://beta.lmfdb.org/ModularCurve/Q/16.64.2.a.1/}{$X_{\mathrm{ns}}^+(16)$} & $2$ & $7$ & 146 \\ 
$-\frac{2^{18}\cdot 3\cdot 5^{3}\cdot 13^{3}\cdot 41^{3}\cdot 107^{3}}{17^{16}}$ & not in lmfdb &\href{https://beta.lmfdb.org/ModularCurve/Q/16.64.2.a.1/}{$X_{\mathrm{ns}}^+(16)$} & $2$ & $7$ & 146 \\ 
$-11\cdot 131^{3}$ & \href{https://www.lmfdb.org/EllipticCurve/Q/121.a2}{121.a2} & \href{https://beta.lmfdb.org/ModularCurve/Q/44.24.2.a.1/}{44.24.2.a.1} & $2$ & $16$ & 147 \\ 
$-11^{2}$ & \href{https://www.lmfdb.org/EllipticCurve/Q/121.c2}{121.c2} & \href{https://beta.lmfdb.org/ModularCurve/Q/44.24.2.a.1/}{44.24.2.a.1}  & $2$ & $16$ & 147 \\ 
$\frac{2^{12}\cdot 3^{3}\cdot 5^{7} 29^3}{7^{5}}$ & \href{https://www.lmfdb.org/EllipticCurve/Q/21175/bm/1}{21175.bm1} & \href{https://beta.lmfdb.org/ModularCurve/Q/25.75.2.a.1/}{25.75.2.a.1} & $2$ & $19$ & 148 \\ 
$-7\cdot 137^{3}\cdot 2083^{3}$ & \href{https://www.lmfdb.org/EllipticCurve/Q/1225.b1}{1225.b1} & \href{https://beta.lmfdb.org/ModularCurve/Q/37.38.2.a.1/}{$X_0(37)$} & $2$ & $97$ & 149 \\ 
$-7\cdot 11^{3}$ & \href{https://www.lmfdb.org/EllipticCurve/Q/1225.b2}{1225.b2} & \href{https://beta.lmfdb.org/ModularCurve/Q/37.38.2.a.1}{$X_0(37)$}& $2$ & $97$ & 149 \\ 
$-\frac{5^{2}\cdot 241^{3}}{2^{3}}$ & \href{https://www.lmfdb.org/EllipticCurve/Q/50.a1}{50.a1} & \href{https://beta.lmfdb.org/ModularCurve/Q/120.48.3.c.1/}{120.48.3.c.1} &$3$ & $9$ & 150 \\ 
$-\frac{5\cdot 29^{3}}{2^{5}}$ & \href{https://www.lmfdb.org/EllipticCurve/Q/50.b3}{50.b3} &\href{https://beta.lmfdb.org/ModularCurve/Q/120.48.3.c.1/}{120.48.3.c.1} &$3$ & $9$  & 150\\ 
$-\frac{5^{2}}{2}$ & \href{https://www.lmfdb.org/EllipticCurve/Q/50.a3}{50.a3} &  \href{https://beta.lmfdb.org/ModularCurve/Q/120.48.3.c.1/}{120.48.3.c.1}&$3$ & $9$ & 150 \\ 
$\frac{5\cdot 211^{3}}{2^{15}}$ & \href{https://www.lmfdb.org/EllipticCurve/Q/50.b4}{50.b4} & \href{https://beta.lmfdb.org/ModularCurve/Q/120.48.3.c.1/}{120.48.3.c.1} & $3$ & $9$ & 150 \\ 
$-\frac{2^{12}\cdot 5^{3}\cdot 11\cdot 13^{4}}{3^{13}}$ & \href{https://www.lmfdb.org/EllipticCurve/Q/61347/bb/1}{61347.bb1} & \href{https://beta.lmfdb.org/ModularCurve/Q/13.91.3.a.1/}{$X_{S_4}(13)$} & $3$ & $10$ & 151\\ 
$\frac{2^{4}\cdot 5\cdot 13^{4}\cdot 17^{3}}{3^{13}}$ & \href{https://www.lmfdb.org/EllipticCurve/Q/50700/z/1}{50700.z1} & \href{https://beta.lmfdb.org/ModularCurve/Q/13.91.3.a.1/}{$X_{S_4}(13)$} & $3$ & $10$ & 151 \\ 
$\frac{2^{18}\cdot 3^{3}\cdot 13^{4}\cdot 127^{3}\cdot 139^{3}\cdot 157^{3}\cdot 283^{3}\cdot 929}{5^{13}\cdot 61^{13}}$ & not in lmfdb & \href{https://beta.lmfdb.org/ModularCurve/Q/13.91.3.a.1/}{$X_{S_4}(13)$}& $3$ & $10$ & 151 \\
$-\frac{3^{3}\cdot 5^{4}\cdot 11^{3}\cdot 17^{3}}{2^{10}}$ & not in lmfdb & \href{https://beta.lmfdb.org/ModularCurve/Q/20.60.3.w.1/}{20.60.3.w.1} &$3$  & $15$ & 152 \\ 
$-\frac{29^{3}\cdot 41^{3}}{2^{15}}$ & \href{https://www.lmfdb.org/EllipticCurve/Q/338.d1}{338.d1} & \href{https://beta.lmfdb.org/ModularCurve/Q/120.72.3.gqp.1}{120.72.3.gqp.1}  &$3$ & $17$ & 153 \\ 
$\frac{11^{3}}{2^{3}}$ & \href{https://www.lmfdb.org/EllipticCurve/Q/338.d2}{338.d2} & \href{https://beta.lmfdb.org/ModularCurve/Q/120.72.3.gqp.1/}{120.72.3.gqp.1} &$3$ & $17$  & 153 \\ 
$-\frac{3^{3}\cdot 13\cdot 479^{3}}{2^{14}}$ & \href{https://www.lmfdb.org/EllipticCurve/Q/338.c1}{338.c1} & \href{https://beta.lmfdb.org/ModularCurve/Q/28.64.3.b.1/}{28.64.3.b.1} &$3$ & $21$ & 154 \\ 
$\frac{3^{3}\cdot 13}{2^{2}}$ & \href{https://www.lmfdb.org/EllipticCurve/Q/338.c2}{338.c2} & \href{https://beta.lmfdb.org/ModularCurve/Q/28.64.3.b.1/}{28.64.3.b.1} &$3$ & $21$ & 154 \\ 
$-\frac{3^{3}\cdot 5^{3}\cdot 383^{3}}{2^{7}}$ & \href{https://www.lmfdb.org/EllipticCurve/Q/162.b1}{162.b1} & \href{https://beta.lmfdb.org/ModularCurve/Q/168.64.3.a.1/}{168.64.3.a.1}&$3$ & $21$ & 155 \\
$-\frac{3^{2}\cdot 5^{6}}{2^{3}}$ & \href{https://www.lmfdb.org/EllipticCurve/Q/162.b3}{162.b3} & \href{https://beta.lmfdb.org/ModularCurve/Q/168.64.3.a.1/}{168.64.3.a.1} &$3$ & $21$ & 155 \\ 
$-\frac{3^{2}\cdot 5^{3}\cdot 101^{3}}{2^{21}}$ & \href{https://www.lmfdb.org/EllipticCurve/Q/162.b2}{162.b2} & \href{https://beta.lmfdb.org/ModularCurve/Q/168.64.3.a.1/}{168.64.3.a.1} &$3$ & $21$ & 155 \\
$\frac{3^{3}\cdot 5^{3}}{2}$ & \href{https://www.lmfdb.org/EllipticCurve/Q/162.b4}{162.b4} &\href{https://beta.lmfdb.org/ModularCurve/Q/168.64.3.a.1/}{168.64.3.a.1} &$3$ & $21$ & 155 \\ 
$2^4 \cdot 3^2\cdot 5^7 \cdot 23^3$ & \href{https://beta.lmfdb.org/EllipticCurve/Q/396900e1/}{396900.e1} & \href{https://beta.lmfdb.org/ModularCurve/Q/100.100.4.d.1/}{100.100.4.d.1} & 4 & 9 & 156\\
$\frac{2^{16}\cdot 3^{3}\cdot 17}{5^{5}}$ & \href{https://www.lmfdb.org/EllipticCurve/Q/5780.c1}{5780.c1} &\href{https://beta.lmfdb.org/ModularCurve/Q/20.120.6.a.1/}{20.120.6.a.1} &$6$ & $6$ & 157 \\ 
$\frac{2^{10}\cdot 3^{2}\cdot 79^{3}}{5^{5}}$ & \href{https://www.lmfdb.org/EllipticCurve/Q/3240.c1}{3240.c1} & \href{https://beta.lmfdb.org/ModularCurve/Q/20.120.6.a.1/}{20.120.6.a.1 }&$6$ & $6$ & 157 \\ 
$-2^{2}\cdot 3^{7}\cdot 5^{3}\cdot 439^{3}$ & not in lmfdb & \href{https://beta.lmfdb.org/ModularCurve/Q/36.108.6.g.1/}{36.108.6.g.1} &$6$ & $14$ & 158 \\ 
$-3^{3}\cdot 5^{4}\cdot 11\cdot 19^{3}$ & \href{https://www.lmfdb.org/EllipticCurve/Q/27225/n/1}{27225.n1} & \href{https://beta.lmfdb.org/ModularCurve/Q/30.120.7.e.1}{30.120.7.e.1}&$7$ & $7$ & 159 \\ 
$-2^{6}\cdot 719^{3}$ & \href{https://www.lmfdb.org/EllipticCurve/Q/43264/f/1}{43264.f1} & 360.108.7.?.?\footnote{The agreeable curves for $j=-2^6 \cdot 719^3$ and $j=2^6$ are the same curve. This curve has level 360, index 108, genus 7, and Cummins and Pauli label 90C7. However, it is not in the LMFDB at the time of writing.\label{foot:notinlmfdb}} &$7$ & $33$ & 160\\ 
$2^{6}$ & \href{https://www.lmfdb.org/EllipticCurve/Q/43264.f2}{43264.f2} & 360.108.7.?.?\footref{foot:notinlmfdb} &$7$ & $33$ & 160 \\ 
\hline
\caption{Table of twist-isolated $j$-invariants. For each $i$, the group $M_i$ is the one corresponding to the agreeable modular curve, while $K_i$ is the adelic Galois image $G_E$ for any choice of $E$ in the second column.}
\label{table:twist isolated}
\end{longtable}

We \gitlink{twist_isolated_j_families.m}{compute} that the twist-isolated points live in 22 families of groups $\mathcal{F}(M_i, K_i)$ and no fewer. To check that no two families $\mathcal{F}(M_i, K_i)$ and $\mathcal{F}(M_j, K_j)$, for some $139\leq i<j\leq 160$, can be merged, we check that either $K_i \cap \SL_2(\Zhat) \neq K_j \cap \SL_2(\Zhat)$, or that there does not exist a $M_k$, containing, up to conjugacy, both $M_i$ and $M_j$, such that $M_k/K_i$ and $M_k/K_j$ are both abelian. 

\section{Proof of Theorem \ref{thm 1+2}} \label{sec: proof}

Let $E/\Q$ be a non-CM elliptic curve.
There are two cases: $j(E)$ is twist isolated and $j(E)$ is twist parameterized.

First, suppose $j(E)$ is twist isolated. 
Let $S_{\rm finite}$ be the set of $j$-invariants coming from the rational points on $X_H(\Q)$ where $H \in A_{\rm finite}$.
We claim that any twist-isolated non-CM $j$-invariant is contained in $S_{\rm finite}$. 
Let $j_0$ be a twist-isolated $j$-invariant. 
Then we can find a subgroup $G\leq \GL_2(\Zhat)$ and a point $x\in X_G(\Q)$ with $j(x)=j_0$ such that $x$ is twist isolated.  
By Proposition \ref{prop:tp_group_condition} (5), we have $\# X_{G^{\ag}}(\Q)<\infty$. 
First consider the case where there exists a prime $\ell>13$ dividing the level of $G^{\ag}$. 
Then for any elliptic curve $E$ over $\Q$ with $j(E)=j_0$ we have $\rho_{E,\ell^\infty}(\Gal_\Q)\subsetneq \GL_2(\Z_\ell)$ by \cite[Lemma 8.1. (i)]{zywina_explicit_images}. Since $\ell>13$, it follows by \cite[Chapter IV, Lemma 3]{serre_elladic} that $\rho_{E,\ell}(\Gal_\Q)\subsetneq \GL_2(\torz{\ell})$. Conjecture \ref{ESUC} implies that  $j \in \{-17^2\cdot 101^3/2,  -17\cdot 373^3/2^{17},  -7\cdot 11^3, -7\cdot 137^3\cdot 2083^3\}\subset S_{\rm finite}$, which deals with this case.
In the case when $G^{\ag}$ is divisible only by the primes $\leq 13$, then $\# X_{G^{\ag}}(\Q)<\infty$ together with Theorem \ref{thm:zywina-finite-Q} (c) implies that $j_0 \in S_{\rm finite}$.

By the results of Section \ref{sec:twist-isolated_j} there are 41 twist-isolated $j$-invariants that lie in $22$ twist families. For $i\in \{139, \ldots, 160\}$ let $M_i=G_{j_i}^{\ag}$ and $K_i=G_{E'}$, where $E'$ is the elliptic curve with $j(E')=j_i$ such that $E'$ is the quadratic twist with minimal conductor such that $j(E')=j_i$. (Note that for each $i$ in Table \ref{table:twist isolated} there can be multiple distinct $j_i$ and thus multiple ways to define $K_i$, but the algorithm in Section \ref{sec:algorithm} verifies that the family $\mathcal{F}(M_i, K_i)$ is well-defined, irrespective of this choice.)
By definition, the adelic image $G_E$  of $E$ belongs to the family of groups $\mathcal F(G_{j(E)}^{\ag}, G_{E^d})$ for any quadratic twist $E^d$ of $E$. 
Conjecture \ref{conj: GIC} implies that, since $\#X_{G_j(E)^{\ag}}(\Q)<\infty$ we have  $j(E) \in J_{\rm exc}$. Since $j(E)$ is twist isolated, it must be one of the 41 twist-isolated $j$-invariants in Table \ref{table:twist isolated}.
Hence,  $G_E\in \mathcal F(M_i, K_i)$, for some $i\in \{139, \ldots, 160\}$.

Now, let $E/\Q$ be an elliptic curve such that $j(E)$ is twist parameterized. Then we have $\#X_{G^{\ag}}(\Q)=\infty$ and by Lemma \ref{lem: G_Eappears} we have $G_E\in \mathcal F(G_E^{\ag},[G_E^{\ag}, G_E^{\ag}])$. Hence $G_E^{\ag}$ is conjugate to some $G \in A_{\inf}$, the finite set  of agreeable subgroups of $\GL_2(\Zhat)$ from Theorem \ref{thm:zywina-finite-Q} with $\#X_{G}(\Q) = \infty$. We see that the groups $G\in A_{\rm inf}$ are ``bases" for families of groups $\mathcal F(G,H)$ containing all the $G_E$ with twist-parameterized $j(E)$. Let $\mathcal{K}$ be the set of all $[G,G]$ with $G\in A_{\rm inf}$. 
 
 We can construct a smaller set $\mathcal A_{\rm base}\subset  A_{\inf}$ of agreeable groups with the property that for any $G\in A_{\rm inf}$ we have $\mathcal F(G,[G,G])\subset \mathcal F(B,[G,G])$ for some $B\in \mathcal A_{\rm base}$ (or in other words, $G$ is conjugate to a subgroup of $B$, and $[G,G]=[B,B]$). In particular, every $K\in \mathcal K$ is in $\mathcal F(M,K)$ for some $M\in \mathcal A_{\rm base}$. The values $M_i, K_i$ in the statement of the theorem are such that every such $\mathcal F(M,K)$ is contained in some $\mathcal F(M_i,K_i)$ and that $\det K_i=\Zhat$, i.e., such that $K_i\in \mathcal F(M_i,K_i)$ (note that, perhaps counterintuitively, this is not required by definition). This minimal set of families containing all the $G_E$ for $j(E)$ twist parameterized has likewise been computed by Zywina \cite[Remark 12.1]{zywina_explicit_images}, and contains 138 families; it consists of the 138 values of $M_i$ from the statement of the theorem. 
 Hence $G_E \in \mathcal F(M_i,[G_E^{\ag},G_E^{\ag}])$ for some $i\in \{1, \ldots, 138\}$, so $G_E \in \mathcal F(M_i,K_i)$ for some $i \in \{1, \ldots ,138\}$. 
 
 Thus, we have proved that 
 $G_E$ is conjugate to $K_i^\chi$ for some character $\chi:\Gal_\Q \rightarrow M_i/K_i = T_i$. Let $\overline{\chi} \colon \Gal_\Q \to \overline{T}_i = M_i/{\pm{K_i}}$ be the corresponding character of $\overline{T}_i$. By Lemma \ref{lem: abelian twist lifts}, $\overline{\chi}$ is the character $\chi_x \colon \Gal_\Q \to \overline{T}_i$ describing the action of $\overline{T}_i$ on $\pi_i^{-1}(x)$, where $x \in X_{M_i}(\Q)$ is the rational point in the base of the twist family below a rational point on $X_{G_E}(\Q)$ with $j$-invariant $j(E)$. This proves the theorem. 
 \qed

\begin{remark}
    If we assume only Conjecture \ref{SUC} instead of Conjecture \ref{conj: GIC}, there would still exist a finite set of triples $(K_i, M_i, T_i)$ for which Theorem \ref{thm 1+2} holds, but it would not be possible to make this set explicit. If Conjecture \ref{conj: Zywina} were to turn out to be false, then it might be necessary to add further triples $(K_i, M_i, T_i)$, all satisfying $\#X_{M_i}(\Q) < \infty$.

\end{remark}

In Theorem \ref{thm 1+2} we have 160 pairs $K_i \subset M_i$ among whose twists are all adelic images of Galois.
The following general lemma shows that there is always a twist $\mathcal{X}_{K_i}^\chi$ such that $\mathcal{X}_{K_i}^\chi \to \mathcal{X}_{M_i}$ is the agreeable closure morphism, i.e. $M_i=(K_i^\chi)^{\ag}$. 
\begin{lemma}\label{lem: ag_closure}
    Suppose $\mathcal{X}_H \to \mathcal{X}_G$ is a modular abelian map with group $T  = \Aut(\mathcal{X}_H/\mathcal{X}_G)$, and suppose that $\mathcal{X}_G$ is agreeable. Then there exists a character $\chi \colon \Gal_\Q\to T$ such that $G = (H^\chi)^\mathrm{ag}$.
\end{lemma}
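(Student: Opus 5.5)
Since $\mathcal X_H \to \mathcal X_G$ is modular and abelian, we have $[G,G] \leq H \trianglelefteq G$ with $T = G/H$ abelian. By Proposition \ref{prop:gps-curves}, each twist $\mathcal X_H^\chi$ is $\mathcal X_{H^\chi}$, where $H^\chi = \{g \in G : gH = \gamma(\det g)\}$ and $\gamma \colon \Zhat^\times \to T$ corresponds to $\chi$ via the cyclotomic character. The plan is to pick $\gamma$ so that $H^\chi$ has surjective determinant and $H^\chi \cap \SL_2(\Zhat) = [G,G]$. Then Lemma \ref{lem:agreeable}(5) forces $(H^\chi)^{\ag} = G$.

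\emph{Choice of $\gamma$.} Write $\bar G = G/[G,G]$ and let $D = (G \cap \SL_2(\Zhat))/[G,G] \leq \bar G$, the kernel of $\det \colon \bar G \to \Zhat^\times$. This determinant map is surjective because $G$ is agreeable and contains the scalars. The subgroup $H/[G,G]$ contains $D' := (H \cap \SL_2(\Zhat))/[G,G]$. I want a homomorphism $\gamma \colon \Zhat^\times \to G/H$ for which $K := \{g : gH = \gamma(\det g)\}$ satisfies $K \cap \SL_2 = H \cap \SL_2$, which holds automatically since elements of determinant $1$ must lie in $H$. This shows $H \cap \SL_2(\Zhat)$ is unchanged by twisting, but that is not yet $[G,G]$.

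\emph{Key idea: choose the twist so that $[K,K]$ is as large as possible.} The cleaner route is to invoke universality. For any twist, $(H^\chi)^{\ag} \leq G$ by Lemma \ref{lem:agreeable}(3), since $G$ is agreeable and contains $H^\chi$. So it suffices to find $\chi$ such that no proper agreeable subgroup of $G$ contains $H^\chi$. By the definition of agreeable closure, $(H^\chi)^{\ag} = \Z_M^\times (H^\chi)_M \times \GL_2(\Z_M')$, where $M$ is the $\SL_2$-level of $[H^\chi,H^\chi]$. Taking $\gamma$ trivial gives $H^\chi = H$, which may have a smaller agreeable closure because scalars or primes may be missing. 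Instead, choose $\gamma$ so that $gH = \gamma(\det g)$ includes the scalars. Concretely, scalars $\lambda \in \Zhat^\times \leq G$ have $\det \lambda = \lambda^2$, and we need $\gamma(\lambda^2) = \lambda H$.

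The main step, and the expected obstacle, is to produce such a $\gamma$ that is a homomorphism on all of $\Zhat^\times$ extending $\lambda^2 \mapsto \lambda H$. This map is well defined on squares only if $\lambda H$ depends only on $\lambda^2$, i.e.\ if $-I \in H$ or the sign ambiguity is harmless. I would handle this by working up to $\pm$ where needed, and then extending the homomorphism from the subgroup of squares to $\Zhat^\times$, possibly after enlarging through a finite quotient. Once scalars lie in $H^\chi$ and $\det H^\chi = \Zhat^\times$, we compare $G$ with $(H^\chi)^{\ag}$. Both contain $H^\chi \cdot \Zhat^\times$, and they agree away from $M$ because $G$ is agreeable with level primes dividing the $\SL_2$-level of $[G,G] \supseteq [H^\chi,H^\chi]$. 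I expect a final index or level comparison to show that these coincide.
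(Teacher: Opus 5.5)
\paragraph{Where the proposal breaks.} Your reduction is the right one, and it is where the paper's proof also starts. Since $G$ is agreeable and $H^\chi\le G$, Lemma \ref{lem:agreeable}(3) gives $(H^\chi)^{\ag}\le G$, so it suffices to find $\chi$ such that no proper agreeable subgroup of $G$ contains $H^\chi$.

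The gap is in how you then try to find $\chi$. Forcing the scalars into $H^\chi$ (together with $\det H^\chi=\Zhat^\times$) is the wrong target. It does not imply $(H^\chi)^{\ag}=G$, and no ``final index or level comparison'' can rescue it. For a counterexample, take $G=\GL_2(\Zhat)$, let $H$ be the preimage of $\Ns(2)$, and let $\chi$ be trivial. Then:
\begin{itemize}
\item $H^\chi=H$ has surjective determinant and contains all scalars, and $G/H\simeq\Z/2\Z$;
\item but $H$ is itself agreeable (it is the base $M_2$ of a family in Table \ref{table:bigtableofgroups});
\item so $(H^\chi)^{\ag}=H\neq G$.
\end{itemize}
Putting scalars into $H^\chi$ pushes $H^\chi$ towards being agreeable itself, which is the opposite of what you want. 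What actually has to be controlled is whether $\Z_M^\times (H^\chi)_M=G_M$ at the primes dividing $M$, and the scalars give no information about this.

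The obstacle you flagged is also worse than stated. The map $\lambda\mapsto\lambda H$ factors through $\lambda\mapsto\lambda^2$ only if $\epsilon I\in H$ for every $\epsilon\in\prod_p\{\pm1\}$, not just $\epsilon=-1$. Moreover, a homomorphism from the squares into the finite group $T$ need not extend to $\Zhat^\times$. This is moot given the counterexample.

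\paragraph{The missing idea: finiteness plus a generic twist.}
\begin{itemize}
\item Every group $G'$ with $H^\chi\le G'\le G$ has the form $H_U^{\chi \bmod U}$, where $U\le T$ and $H_U$ is the preimage of $U$ in $G$. All such $G'$ have index at most $|T|\,[\GL_2(\Zhat):G]$.
\item By Proposition \ref{prop: bounded index}, only finitely many of them are agreeable. So for each proper $U<T$, only finitely many characters $\Gal_\Q\to T/U$ are ``bad''.
\item All bad characters are unramified outside a fixed integer $N'$, divisible by the level of $H$ and the levels of these finitely many groups.
\item Choose $\chi$ surjective onto $T$ and ramified only at primes not dividing $N'$; this is possible by Dirichlet/Kronecker--Weber. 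Then for every $U\neq T$, the character $\chi\bmod U$ is nontrivial and ramified outside $N'$, hence not bad.
\item Now $(H^\chi)^{\ag}$ is agreeable and lies between $H^\chi$ and $G$, so it must equal $G$ (Lemma \ref{lem: agreeable quotient properties}(6)).
\end{itemize}
This is exactly the paper's argument, phrased there as choosing a $T$-extension of $\Q$ that avoids finitely many number fields.
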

\begin{proof}
    If $T$ is trivial, then $H = G$ is its own agreeable closure.
    When $T$ is non-trivial, as we vary over the infinitely many $\chi \colon \Gal_\Q \to T$, there are only finitely many agreeable modular curves $\mathcal{X}_{G'}$ that appear as intermediate covers 
    \[\mathcal{X}^\chi_H \to \mathcal{X}_{G'} \stackrel{f}{\to} \mathcal{X}_G\]
    with $\deg(f) > 1$, by Proposition \ref{prop: bounded index}. However, there exist characters $\chi$ which avoid these finitely many agreeable intermediate covers. Indeed, it is enough to show that given finitely many number fields $K_1,\ldots, K_n$, there exists a $T$-extension $K$ of $\Q$ not containing any of the $K_i$. This follows easily from the Kronecker--Weber theorem.  It follows that there exists a twist $\mathcal{X}^\chi_H$ such that all of its intermediate covers $\mathcal{X}_{G'}$ as above are {\it not} agreeable. 
    For such a $\chi$, the map $\mathcal{X}_H^\chi \to \mathcal{X}_G$ must be the agreeable quotient by Lemma \ref{lem: agreeable quotient properties}(6). 
    \end{proof}

\begin{remark}
    Lemma \ref{lem: ag_closure} shows that Theorem \ref{thm 1+2} could also have been formulated by specifying only certain groups \(K_i\) for which $M_i = K_i^\mathrm{ag}$ is the agreeable closure of $K_i$ (as it is enough to replace each group $K_i$ in Table \ref{table:bigtableofgroups} with some twist $K_i^\chi$), giving a statement more in line with Conjecture \ref{conj: mazur program B revised}.
This reformulation would not alter the mathematical content of the theorem, but it would instead require giving the groups \(K_i^\chi\) of larger level that likely do not appear in the LMFDB. 
\end{remark}

\section{Points explained by geometry}\label{sec: geometric points}

Having (conditionally) classified all adelic images $G_E$ of all elliptic curves $E$ over $\Q$, our next goal is to (conditionally) show that all rational points on all modular curves are ``explained by geometry''. In this section, we formalize this notion. As the focus in this section and the next is on rational points, we only consider the modular curves $X_H$ and not the stacks $\mathcal{X}_H$ (see Remark \ref{rem: stacks}).  

In the following definition we distinguish between a closed point $y \in X_H$ and a rational point $y \in X_H(\Q)$, which is by definition a closed point of degree $1$.

\begin{definition}[Explained points]
    The set of {\it explained points} (or \emph{points explained by geometry}) is the smallest set of closed points $y \in X_H$, as $H$ varies over all open subgroups of $\GL_2(\widehat{\Z})$, satisfying the following axioms: 
\begin{enumerate}
    \item\label{special} (Special) If $y \in X_H$ is a special closed point, then it is explained.
    \item\label{pushforward}(Push-forward) Let $\pi \colon X_{H'} \to X_H$ be a non-constant morphism and let $y \in X_{H'}(\Q)$ be an explained rational point. If either
    \begin{itemize}
        \item $\pi$ is modular, or
        \item there exists an explained point $y' \in X_{H'}(\Q)$ such that $\pi(y')$ is explained, or
        \item $g(X_H) > 1$,
    \end{itemize}
    then $\pi(y)$ is explained.
    \item\label{twistlift}(Abelian lift) Let $X_H \stackrel{\pi}{\to} X_{H'}$ be a modular abelian map and let $y \in X_{H}(\Q)$. If $\pi(y)$ is explained, then $y$ is explained. 
    \item\label{collinearity} (Collinearity) Let $y \in X_H(\Q)$, let $x_1,\ldots, x_n \in X_H$ be explained closed points, and let  
    \[D = \begin{cases} \text{an anticanonical divisor} & \text{if } g(X_H) = 0 \\
         \text{$3x$, for some explained point $x \in X_H(\Q)$} &  \text{if $g(X_H) = 1$}\\
        \text{a canonical divisor} & \text{if $g(X_H) > 1$.} 
    \end{cases}
    \]
    If $D$ is linearly equivalent to $y + \sum_{i = 1}^n x_i$, then $y$ is explained.
    \item\label{fibersplice} (Fiber splicing) Suppose $g(X_H) > 1$ and for $i \in \{1,2\}$, let $E_i$ be an elliptic curve over $\Q$ with $\Hom(X_H,E_i) = \Z f_i$ for some map $f_i \colon X_H \to E_i$.\footnote{Here, $\Hom(X_H,E_i)$ denotes the morphisms $X_H \to E_i$ factoring through the canonical Abel--Jacobi map $X_H \to \mathrm{Jac}(X_H)$ sending $P$ to $(2g-2)P - K$, where $K$ is a canonical divisor. Note that $\Hom(X_H,E_i)$ is rank $1$ if and only if $\Hom(\mathrm{Jac}(X_H),E_i)$ is rank $1$ if and only if $E_i$ has multiplicity $1$ as an isogeny factor of the Jacobian.}
    We say a point $x_i \in E_i(\Q)$ is explained if $x_i = f_i(x)$ for some explained point  $x \in X_H(\Q)$ or if $E_i$ is isomorphic to a modular curve $X_G$ and  all of $X_G(\Q)$ is explained. Let $x_1 \in E_1(\Q)$ and $x_2 \in E_2(\Q)$ be explained points, and suppose
    \begin{itemize}
        \item $f_1^{-1}(x_1) \cap f_2^{-1}(x_2) = \{y\}$ for some $y \in X_H(\Q)$, and 
        \item either $\#E_1(\Q) < \infty$ or $x_1 = f_1(x)$ for some explained point $x \in X_H(\Q)$.
    \end{itemize}
    Then $y$ is explained.  
\end{enumerate} 
\end{definition}

Intuitively, the explained rational points on modular curves are those that arise from special points via natural geometric operations that preserve rationality. The following lemmas and their proofs will help clarify the geometric significance of axiom (\ref{collinearity}).

\begin{lemma}\label{lem: genus 0 collinearity}
    If $g(X_H) = 0$ and $x_1 \in X_H(\Q)$ is explained, then all of $X_H(\Q)$ is explained. 
\end{lemma}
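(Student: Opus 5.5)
The plan is to use the collinearity axiom (\ref{collinearity}) directly in the genus $0$ case. Since $x_1 \in X_H(\Q)$ is a rational point and $g(X_H)=0$, we have $X_H \simeq \PP^1_\Q$ over $\Q$. On $\PP^1$ the anticanonical divisor class is $\mathcal{O}(2)$, i.e.\ the class of any effective divisor of degree $2$. Every divisor of degree $2$ on $\PP^1$ is linearly equivalent to every other, because $\mathrm{Pic}(\PP^1)\simeq \Z$ via the degree.

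Now fix an arbitrary $y \in X_H(\Q)$ and take $n=1$ with the explained closed point $x_1$. The divisor $y + x_1$ has degree $2$, so it is linearly equivalent to an anticanonical divisor $D$ (for instance $D = 2x_1$, or any rational divisor in $|{-K}|$). Axiom (\ref{collinearity}) then says that $y$ is explained. Since $y$ was arbitrary, all of $X_H(\Q)$ is explained.

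The only point needing care is the interpretation of ``an anticanonical divisor'' in the axiom. The argument should record that $D$ may be taken to be any divisor defined over $\Q$ in the anticanonical class, and that this class contains $y+x_1$, which is itself a $\Q$-rational effective divisor. One should also note that the isomorphism $X_H \simeq \PP^1_\Q$ follows from the existence of the rational point $x_1$: a smooth projective genus $0$ curve with a rational point is $\PP^1$. Even without it, linear equivalence of degree-$2$ divisors holds on $X_H$ over $\overline{\Q}$, hence over $\Q$, since $\mathrm{Pic}$ of a genus $0$ curve injects into $\Z$ via the degree. There is no real obstacle; the lemma is essentially immediate from the definition.
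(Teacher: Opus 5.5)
Your proof is correct and is essentially the paper's argument: the paper views $X_H$ as its anticanonical conic in $\PP^2$ and observes that the rational line through $x_1$ and any $y \in X_H(\Q)$ cuts out an anticanonical divisor $x_1 + y$, which is your application of the collinearity axiom (\ref{collinearity}) with $n=1$. Your use of the degree map $\mathrm{Pic}(X_H)\hookrightarrow \Z$ gives the linear equivalence directly, so you do not need the conic embedding or Euclid's chord argument.
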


\begin{proof}
    This follows from axiom (\ref{collinearity}), which in this case says that when we view $X_H$ as a conic in $\PP^2$, then any line through $x_1$ intersects $X_H$ at an explained (and necessarily rational) point.  Since all rational points arise this way (by Euclid's argument), all rational points are explained.
\end{proof}

\begin{lemma}\label{lem: genus 1 collinearity}
    Suppose $g(X_H) = 1$ and $x \in X_H(\Q)$ is explained. View $X_H(\Q)$ as a group with identity element $x$. If $S \subset X_H(\Q)$ is a subset consisting entirely of explained points, then the subgroup $\langle S \rangle \subset X_H(\Q)$ generated by $S$ also consists of explained points.
\end{lemma}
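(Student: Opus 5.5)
The plan is to use axiom (\ref{collinearity}) with $D = 3x$, which in genus $1$ says: if $y + x_1 + \cdots + x_n \sim 3x$ with each $x_i$ explained, then $y$ is explained. On an elliptic curve $(X_H, x)$, a divisor $\sum n_P P$ of degree $0$ is principal exactly when $\sum n_P P = 0$ in the group law with identity $x$ (Abel's theorem). So $y + x_1 + x_2 \sim 3x$ is equivalent to $y = -(x_1 + x_2)$ in the group $X_H(\Q)$. The proof therefore reduces to checking that the set $\mathcal{E}$ of explained rational points, together with $x$, is closed under $(P,Q) \mapsto -(P+Q)$, and then deducing closure under negation and addition.

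The steps, in order:
\begin{enumerate}
\item The identity $x$ is explained by hypothesis.
\item For $P \in \mathcal{E}$, apply axiom (\ref{collinearity}) with $n = 2$, $x_1 = P$, $x_2 = x$. Since $y + P + x \sim 3x$ means $y = -P$, we get $-P \in \mathcal{E}$. (Alternatively take $x_1 = P$, $x_2 = x_3 = x$ with $n=3$; either choice works because repetitions are allowed.)
\item For $P, Q \in \mathcal{E}$, apply the axiom with $x_1 = P$, $x_2 = Q$. This gives $y = -(P+Q) \in \mathcal{E}$, and then step 2 gives $P + Q \in \mathcal{E}$.
\item Hence $\mathcal{E} \cap X_H(\Q)$ is a subgroup of $X_H(\Q)$. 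It contains $S$, so it contains $\langle S\rangle$.
\end{enumerate}

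The only point needing care is the translation between linear equivalence and the group law. We need $y + P + Q - 3x$ to be principal if and only if $y + P + Q = 0$ in the group with origin $x$. This is the standard isomorphism $E(\Q) \to \mathrm{Pic}^0(E)$, $P \mapsto [P - x]$, and it holds over $\Q$: a rational divisor that is principal over $\Qbar$ is principal over $\Q$ by Hilbert 90, and in any case the axiom can be read geometrically. We should also confirm that the axiom allows the $x_i$ to be rational points and to repeat, which it does, since they are just explained closed points. No further obstacle is expected.
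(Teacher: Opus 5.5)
Your proposal is correct and is essentially the paper's proof: both apply axiom (\ref{collinearity}) with $D = 3x$ to the divisors $(-s) + s + x$ and $(-(s_1 \oplus s_2)) + s_1 + s_2$, using that $y + x_1 + x_2 \sim 3x$ exactly when $y \oplus x_1 \oplus x_2 = 0$, to get closure under negation and then under addition. Your remark that linear equivalence over $\Qbar$ descends to $\Q$ by Hilbert 90 is a detail the paper leaves implicit.
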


\begin{proof}
    Let $E$ be the elliptic curve $(X_H,x)$. Denote the addition on $E(\Q)$ by $\oplus$. To prove the lemma it is enough to show that the inverse of an explained point is explained and the sum of two explained points is explained. We will use axiom (\ref{collinearity}) and the fact that $x_1 \oplus x_2 \oplus y = 0$ if and only if $x_1 + x_2 + y$ is linearly equivalent to $3x$.  If $s \in E(\Q)$ is explained, then $3x$ is linearly equivalent to $(-s) + s + x$, and so $-s$ is explained. If $s_1, s_2 \in E(\Q)$ are explained, $3x$ is linearly equivalent to $(-s_1\oplus -s_2) + s_1 + s_2$, and so $-s_1 \oplus-s_2$ is explained, and hence $s_1 \oplus s_2$ is explained.    
\end{proof}

\begin{lemma}\label{lem: non-hyperelliptic collinearity}
    Suppose $X_H$ is a geometrically non-hyperelliptic curve, and let $y+ \sum_{i = 1}^n x_i$ be a degree $2g-2$ hyperplane section on $X_H$ viewed in $\PP^{g-1}$ via the canonical embedding. If the closed points $x_i$ are all explained, then $y$ is rational and explained. In other words, if $y$ is coplanar with only explained points, then it is itself explained.      
\end{lemma}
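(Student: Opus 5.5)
The plan is to reduce to axiom (\ref{collinearity}) in the case $g(X_H)>1$, and to get rationality of $y$ from the Galois-stability of the hyperplane section. Since $X_H$ is geometrically non-hyperelliptic, $g \geq 3$, and the canonical map $X_H \to \PP^{g-1}$ is a closed embedding defined over $\Q$. Hyperplane sections of this embedding are exactly the effective canonical divisors. So the hyperplane section $y+\sum_{i=1}^n x_i$ is a divisor of degree $2g-2$ linearly equivalent to a canonical divisor $K$. Here we read the statement as saying that the hyperplane is defined over $\Q$ and that the $x_i$ are closed points of $X_H$, so that $\sum x_i$ is a $\Q$-rational effective divisor.

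The first step is rationality. The section $\Lambda \cap X_H$ is a $\Q$-rational effective divisor of degree $2g-2$, because $\Lambda$ is a rational hyperplane. Subtracting the $\Q$-rational effective divisor $\sum x_i$, each $x_i$ being a closed point and hence Galois-stable, leaves a $\Q$-rational effective divisor of degree $1$. A degree-one effective divisor defined over $\Q$ is a single closed point of degree $1$, so $y\in X_H(\Q)$. One subtlety is multiplicities: if some $x_i$ or $y$ occurs with multiplicity, we interpret the hyperplane section as a divisor, and the subtraction argument is unchanged.

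The second step applies axiom (\ref{collinearity}) with $D$ a canonical divisor, which is the case $g(X_H)>1$. We have $y+\sum x_i \sim K$ and all $x_i$ are explained, so $y$ is explained.

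The only point needing care is the convention that the $x_i$ are closed points, possibly of higher degree, and the identification of hyperplane sections with canonical divisors. Both are standard. The rationality step is where a non-rational hyperplane would break the argument, so we make explicit that the hyperplane, equivalently the divisor $y+\sum x_i$, is $\Q$-rational.
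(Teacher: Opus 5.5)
Your proposal is correct and matches the paper's proof, which is simply an appeal to axiom (\ref{collinearity}) with $D$ a canonical divisor. Your rationality check is a useful addition, since that axiom presupposes $y\in X_H(\Q)$, and you do not need to assume the hyperplane is rational: because $g>0$, $y$ is the unique effective divisor in the Galois-stable class $K-\sum x_i$, so $y$ (and hence the hyperplane) is automatically defined over $\Q$.
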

\begin{proof}
    Again this follows from axiom (\ref{collinearity}). 
\end{proof}

We finish with some remarks on the motivation for the other axioms. 

\begin{remark}[Motivation for axioms of explained points]\label{rem: motivation for definition of explained} \hfill
\begin{itemize}
    \item Special points are explained by geometry since they arise from the boundary of the open modular curve or from fixed points of Atkin--Lehner involutions. When they are rational, their rationality is explained by Shimura reciprocity.  
    \item (\ref{pushforward}) is a generalization of Ogg's explanation of the rational points on $X_0(37)$. To avoid tautologies we impose additional rigidifying restrictions on the morphisms when $g(X_H)\leq 1$. 
    \item (\ref{twistlift}) is motivated by Lemma \ref{lem: abelian twist lifts}. 
    \item When $g(X_H) = 2$, axiom (\ref{collinearity}) says that the hyperelliptic involution sends explained points to explained points. Thus, it too generalizes Ogg's  $X_0(37)$ example.
    \item A crucial aspect of (\ref{collinearity}) is that given a set of known explained points in $X_H(\Q)$, there is a terminating algorithm to find all explained points on $X_H$ that can be found using this axiom.  Indeed, the degree of any non-rational explained closed point $x_i$ is bounded by $2g-3$ (or by $2$ if $g = 1$) and any such $x_i$ must be special. The claim then follows from the fact that there are finitely many special closed points on $X_H$ of bounded degree.
    \item (\ref{fibersplice}) says that if $\{y\}$ is the intersection of two rational fibers, then $y$ itself must be rational. The extra hypotheses on $E_1$ ensure that there is a terminating algorithm to find such points. 
    \item It is important in Ogg's example $X_0(37)$ that the hyperelliptic involution is defined over $\Q$. One might ask if there is a geometric reason that all morphisms $\pi$ and $f_i$ above are defined over $\Q$. Indeed there is, as Brunault \cite{Brunault} has proven that $\Hom(\mathrm{Jac}(X_H),\mathrm{Jac}(X_{H'})) \otimes \Q$ is generated by Hecke operators via the geometric action of $\GL_2(\Q)^+$ on the upper half-plane. Since we have sufficiently rigidified,  
    we may view any morphism $X_{H'} \to X_{H}$ that is allowed by the axioms as arising from the Hecke operators as well. Similarly, all maps $X_H \to E$ where $E$ is an $\Q$-isogeny factor of $\mathrm{Jac}(X_H)$ arise from this construction. 
\end{itemize}
\end{remark}

\section{Proof of Theorem \ref{thm: B}}
In this section we prove Theorem \ref{thm: B}. We will use Theorem \ref{thm 1+2} (and the accompanying Tables~\ref{table:twist isolated} and \ref{table:bigtableofgroups}) as well as Theorem \ref{thm: 41 twist isolated points} to show that all rational points $y \in X_H(\Q)$ on all modular curves $X_H$ are explained by geometry in the sense of the previous section. We only invoke Conjecture \ref{conj: GIC} later in the proof, so the first part concerning rational points whose $j$-invariants are twist-parameterized is unconditional.

\subsection{Reduction to the twist-isolated \texorpdfstring{$j$}{j}-invariants}

\begin{theorem}\label{thm: twist parameterized explained}
    Let $y \in X_H(\Q)$ be a rational point whose $j$-invariant is twist parameterized. Then $y \in X_H(\Q)$ is explained.
\end{theorem}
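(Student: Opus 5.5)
Let $j_0 = j(y)$. If $y$ is special it is explained by axiom (\ref{special}), so assume $y$ is non-special. The plan is to push $y$ down along modular maps to a curve with infinitely many rational points, show that curve is entirely explained, and then lift back using axiom (\ref{twistlift}).

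First, reduce to the adelic image. Choose $E$ with $j(E) = j_0$. Then $G_E$ is conjugate to a subgroup of $\pm H$ (up to the quadratic twist ambiguity, which is harmless on coarse spaces). Hence there is a rational point $\tilde y \in X_{G_E}(\Q)$ whose image under the modular map $X_{G_E} \to X_H$ is $y$. By axiom (\ref{pushforward}), with $\pi$ modular, it suffices to show that $\tilde y$ is explained.

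Next, use twist parameterization. Since $j_0$ is twist parameterized, the curve $X_{G_E}$ is twist parameterized, so $\#X_{G_E^{\ag}}(\Q) = \infty$. Indeed, otherwise Proposition \ref{prop:tp_group_condition}(7), applied via part (8), would make a modular image twist isolated. The map $X_{G_E} \to X_{G_E^{\ag}}$ is abelian, with group $\overline{T}_{G_E}$. By axiom (\ref{twistlift}), it is therefore enough to show that $\pi_{G_E}(\tilde y)$ is explained.

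Then show that all rational points on the infinite curves are explained. By Theorem \ref{thm:zywina-finite-Q}(b), $G_E^{\ag}$ is conjugate to some $M \in A_\infty$, and $X_M$ is either $\PP^1$ or a positive-rank elliptic curve.

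If $X_M \cong \PP^1$, the curve has a rational cusp or a rational CM point. One should be able to find such a point among the cusps, or among the images of CM points of class number one. Lemma \ref{lem: genus 0 collinearity} then explains all of $X_M(\Q)$.

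If $X_M$ has genus one, fix an explained rational base point $x$, for instance a rational cusp or a rational CM point. By Lemma \ref{lem: genus 1 collinearity}, it then suffices to check that a set of generators of the Mordell--Weil group $X_M(\Q)$ consists of explained points. There are two ways to get these. One is to use rational special points together with collinearity with higher-degree CM points (axiom (\ref{collinearity}) with $D = 3x$). The other is to push forward from an explained point via a modular map from a curve where everything is already explained, for example another curve in $A_\infty$ mapping onto $X_M$.

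The main obstacle is this genus-one case. For each of the $139$ elliptic-curve bases, one needs a rational special point, and one must show that the explained points generate $X_M(\Q)$, i.e.\ that they generate the full group and not a finite-index subgroup. I would handle this computationally. For each $M$, enumerate the rational cusps and the rational CM points (CM discriminants of class number one). Compute their images in $X_M(\Q)$, add low-degree CM divisors via collinearity, and check via saturation that these generate $X_M(\Q)$. For any stubborn case, push forward from a modular cover or quotient in $A_\infty$ that is already handled.

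Since the $139$ cases are finite and explicit, this completes the argument. The argument is unconditional, because only Theorem \ref{thm:zywina-finite-Q} is used and not Conjecture \ref{conj: GIC}.
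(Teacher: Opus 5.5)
Your outline matches the paper's: push forward from $X_{G_E}$ by axiom~(\ref{pushforward}), lift along an abelian map from an agreeable curve with infinitely many points by axiom~(\ref{twistlift}), then explain the finitely many genus $\le 1$ base curves via Lemmas~\ref{lem: genus 0 collinearity} and~\ref{lem: genus 1 collinearity}. However, two steps fail as written.

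\textbf{The first failing step.} The claim that ``$j_0$ twist parameterized implies $\#X_{G_E^{\ag}}(\Q)=\infty$'' is false. Finiteness of $X_{G_E^{\ag}}(\Q)$ only means that $X_{G_E^{\ag}}$ is not twist parameterized over \emph{itself}, and Proposition~\ref{prop:tp_group_condition}(7) says no more. That is weaker than being twist isolated: the curve can still be an abelian cover of a strictly larger agreeable curve with infinitely many points. This actually happens. Every exceptional $j$ has $X_{G_j^{\ag}}(\Q)$ finite, since by Lemma~\ref{lem:agreeable}(3) the curve $X_{G_j^{\ag}}$ maps to the finite agreeable curve carrying $j$. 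Yet $36$ of the $77$ exceptional $j$-invariants are twist parameterized, for example $-631595585199146625/218340105584896$. The fix is to use the witness supplied by the definition. $X_{G_E}$ is twist parameterized over some $X_H$, and by Proposition~\ref{prop:tp_group_condition}(2) you may take $H$ agreeable. Then $H$ is conjugate to a member of $A_\infty$ by Theorem~\ref{thm:zywina-finite-Q}(b), and you apply axiom~(\ref{twistlift}) to $X_{G_E}\to X_H$ rather than to $\pi_{G_E}$. The paper words this step as ``twist parameterized over its agreeable closure'', but what its argument uses is exactly this weaker fact: $X_{G_x}$ is an abelian cover of one of the $138$ curves $X_{M_i}$. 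Since your later steps range over all of $A_\infty$ anyway, nothing else needs to change.

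\textbf{The second failing step.} Your verification of the base curves rests on a false expectation, and it lacks the tool that repairs it. Not every genus-$0$ base has a rational cusp or CM point. The paper finds that 8.12.0.h.1, 8.12.0.g.1, 16.16.0.b.1, 16.48.0.h.1 and 16.48.0.t.2 have none.

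Your fallback of pushing forward from a modular cover in $A_\infty$ cannot produce a first explained point on these curves. Modular maps send cusps to cusps and CM points to CM points, so such covers have no rational special points either. On curves of genus $\le 1$, collinearity only propagates from an explained rational point you already have. And ``pushing forward from a quotient'' goes in the wrong direction.

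The missing idea is to apply axiom~(\ref{twistlift}) downward. Each of these five curves is an abelian cover of a genus-$0$ curve that does have a rational special point, so all of its rational points are abelian lifts of explained points. The paper uses the same device for $14$ of the $22$ rank-one genus-$1$ bases, which are degree-$2$ abelian covers of such genus-$0$ curves. It checks that special points generate Mordell--Weil only for the remaining $8$. Your plan has no recourse if a genus-$1$ base lacks a rational special point, or if its special points generate only a proper subgroup.

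Finally, reducing first to the $138$ minimal bases $X_{M_i}$, as the paper does, keeps the computation much smaller than treating all $454$ curves of $A_\infty$. This reduction is legitimate: every $H\in A_\infty$ lies in some base $B$ with $[B,B]=[H,H]$, so $X_{G_E}\to X_B$ is still abelian.
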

\begin{proof}
    If $y$ is special, then this is clear, so we may assume $y$ is non-special.  Let $x:=j(y)$. By axiom (\ref{pushforward}) it is enough to show that  all rational points on $X_{G_x}$ are explained (since we may then push them forward to explain $y$).  
    By assumption, $X_{G_x}$ is twist parameterized over its agreeable closure.  
    If we assume Conjecture \ref{conj: GIC}, then it follows from Theorem \ref{thm 1+2} that $X_{G_x}$ is an abelian cover of some $X_{M_i}$. However, this also follows unconditionally from Theorem \ref{thm:zywina-finite-Q}(b) and our proof of Theorem \ref{thm 1+2}. 
    By axiom (\ref{twistlift}), it is enough to show that all rational points on the $138$ twist-parameterized curves $X_{M_i}$ in Table~\ref{table:bigtableofgroups} are explained. 
    
    We begin with the 116 genus $0$ curves $X_{M_i}$ in Table \ref{table:bigtableofgroups}. By Lemma \ref{lem: genus 0 collinearity}, it is enough to show that each curve has at least one explained rational point (e.g.\ a rational  special point). We check that only five of these curves do not have rational special points, namely the curves \href{https://beta.lmfdb.org/ModularCurve/Q/8.12.0.h.1/}{8.12.0.h.1}, \href{https://beta.lmfdb.org/ModularCurve/Q/8.12.0.g.1/}{8.12.0.g.1}, \href{https://beta.lmfdb.org/ModularCurve/Q/16.16.0.b.1/}{16.16.0.b.1}, \href{https://beta.lmfdb.org/ModularCurve/Q/16.48.0.h.1/}{16.48.0.h.1}, and \href{https://beta.lmfdb.org/ModularCurve/Q/16.48.0.t.2/}{16.48.0.t.2}. 
    However all five of these curves are abelian covers of genus $0$ curves that do have rational special points. By axioms (\ref{special}) and (\ref{twistlift}) and Lemma \ref{lem: genus 0 collinearity}, all rational points on these five curves are explained as well. 

    Next consider the remaining curves $X_{M_i}$, which are all genus $1$ curves with rank 1. 
    Fourteen of these curves are  degree 2 abelian covers of modular curves of genus $0$ with rational special points, so all of their rational points are explained. The remaining curves are \href{https://beta.lmfdb.org/ModularCurve/Q/11.55.1.b.1/}{11.55.1.b.1}, \href{https://beta.lmfdb.org/ModularCurve/Q/15.15.1.a.1/}{15.15.1.a.1}, \href{https://beta.lmfdb.org/ModularCurve/Q/15.30.1.a.1/}{15.30.1.a.1}, \href{https://beta.lmfdb.org/ModularCurve/Q/20.20.1.c.1/}{20.20.1.c.1}, \href{https://beta.lmfdb.org/ModularCurve/Q/21.63.1.a.1/}{21.63.1.a.1}, \href{https://beta.lmfdb.org/ModularCurve/Q/24.48.1.mk.1/}{24.48.1.mk.1}, \href{https://beta.lmfdb.org/ModularCurve/Q/30.30.1.a.1/}{30.30.1.a.1}, \href{https://beta.lmfdb.org/ModularCurve/Q/36.36.1.e.1/}{36.36.1.e.1}. For each of these 8 curves we check that all rational points are generated by special points (where we view $X_H$ as a group with identity element some special point). Thus, their points are explained by Lemma \ref{lem: genus 1 collinearity}.
    \end{proof}

\begin{lemma}\label{lem: reduction to agreeable twist isolated}
If $y \in X_H(\Q)$ is not explained, then $y$ is non-special, $j_0:=j(y)$ is twist isolated, and there exists a rational point on $X_{G_{j_0}^{\mathrm{ag}}}$ which is not explained.
\end{lemma}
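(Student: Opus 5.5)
We argue by contraposition, establishing the three conclusions in turn. First, if $y$ were special it would be explained by axiom (\ref{special}), so $y$ is non-special and $j_0 := j(y)$ is a non-CM rational $j$-invariant. Second, if $j_0$ were twist parameterized, then $y$ would be explained by Theorem \ref{thm: twist parameterized explained}; hence $j_0$ is twist isolated.

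For the third conclusion we compare $y$ with the rational points on $X_{G_{j_0}}$. Take any elliptic curve $E/\Q$ with $j(E)=j_0$. Then $E$ gives a rational point $z\in X_{G_{j_0}}(\Q)$, where $G_{j_0} = \pm G_E$. Since $\rho_E(\Gal_\Q)=G_E$ and $E$ also gives the point $y$ on $X_H$, the group $G_E$ is conjugate to a subgroup of $H$ (up to $\pm$, which is harmless on coarse spaces by Remark \ref{rem: stacks}). After conjugating, we obtain a modular map $X_{G_{j_0}} \to X_H$ sending $z$ to $y$. The one point needing care is that this map hits exactly $y$ and not merely some point with the same $j$-invariant. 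We handle this by choosing the level structure $\alpha$ defining $z$ to be one compatible with the $H$-orbit representing $y$; this is possible because $y$ corresponds to an $H$-orbit of level structures on $E$ that is stable under $\Gal_\Q$.

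With this map in hand, the pushforward axiom (\ref{pushforward}) (in its modular case) shows that if $z$ were explained then $y$ would be explained. So $z$ is not explained.

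Finally, consider the agreeable closure map $\pi\colon X_{G_{j_0}} \to X_{G_{j_0}^{\ag}}$. By Lemma \ref{lem: agreeable quotient properties} this is a modular abelian map with group $\overline{T}_{G_{j_0}}$. If $\pi(z)\in X_{G_{j_0}^{\ag}}(\Q)$ were explained, axiom (\ref{twistlift}) would make $z$ explained. Hence $\pi(z)$ is a rational point on $X_{G_{j_0}^{\ag}}$ that is not explained.

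The only step requiring real care is constructing the modular map through $z$ that lands exactly on $y$; the remaining steps are direct applications of the axioms and Theorem \ref{thm: twist parameterized explained}.
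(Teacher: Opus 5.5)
Your proof is correct and follows the same route as the paper. Non-specialness comes from axiom (\ref{special}), twist-isolatedness from Theorem \ref{thm: twist parameterized explained}, and then push-forward along the modular map $X_{G_{j_0}}\to X_H$ is combined with abelian lift along $X_{G_{j_0}}\to X_{G_{j_0}^{\ag}}$; your explicit choice of the level structure $\alpha$, which guarantees that $z\mapsto y$, makes precise a step the paper leaves implicit.
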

\begin{proof}
    That $y$ is not special is clear. That $j(y)$ is twist isolated follows from Theorem \ref{thm: twist parameterized explained}. If $X_{G_{j_0}^{\mathrm{ag}}}(\Q)$ is entirely explained, then $X_{G_{j_0}}(\Q)$ is explained by axiom (\ref{twistlift}) and hence $y$ is explained by axiom (\ref{pushforward}), which is a contradiction. 
\end{proof}

By Lemma \ref{lem: reduction to agreeable twist isolated}, to prove Theorem \ref{thm: B} it suffices to explain the rational points on the agreeable modular curves $X_{G_{j_0}^{\ag}}$  attached to the $41$ distinct twist-isolated $j$-invariants $j_0$ listed in Table \ref{table:twist isolated}. We consider these $22$ curves from low genus to high genus, in the following subsections. 

For later use we record the following corollary.
\begin{corollary}\label{cor: X_H infinite is explained}
    Assume Conjecture \ref{conj: GIC}. If $X_H(\Q)$ is infinite, then all rational points on $X_H(\Q)$ are explained.
\end{corollary}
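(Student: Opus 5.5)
The plan is to show that every rational point $y \in X_H(\Q)$ has a twist-parameterized $j$-invariant (or is special), and then invoke Theorem \ref{thm: twist parameterized explained}. Special points are explained by axiom (\ref{special}), so assume $y$ is non-special and set $j_0 := j(y)$.

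The key step is that $j_0$ cannot be twist isolated. Suppose it were. By Theorem \ref{thm: 41 twist isolated points}, which is where Conjecture \ref{conj: GIC} enters, $j_0$ is one of the $41$ entries of Table \ref{table:twist isolated}. Rather than arguing through the definition of twist isolatedness for $j$-invariants, I would argue directly. Since $X_H(\Q)$ is infinite, $X_H$ is twist parameterized over itself by Remark \ref{rem: twist isolated} a). Choose $E$ with $j(E)=j_0$ and $G_E$ conjugate to a subgroup of $H$; this is possible since $y$ is non-special. Then $G_E \le H$ gives $G_E^{\ag} \le H^{\ag}$, and $\#X_{H^{\ag}}(\Q)=\infty$.

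The difficulty is that $[H^{\ag},H^{\ag}]$ may be strictly smaller than $[G_E,G_E]$. In that case $X_{G_E}$ is not an abelian cover of $X_{H^{\ag}}$, and we cannot simply lift along an abelian map. So I would avoid working with $G_E$ and instead use $H$ itself.

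The cleaner route is as follows. Write $H \le H^{\ag}$. The map $X_H \to X_{H^{\ag}}$ is abelian with base having infinitely many points, so by axiom (\ref{twistlift}) it suffices to show that all of $X_{H^{\ag}}(\Q)$ is explained. Now $H^{\ag}$ is agreeable with infinitely many rational points, so by Theorem \ref{thm:zywina-finite-Q}(b) it is conjugate to a group in $A_{\infty}$. Every group in $A_\infty$ is contained, with equal commutator subgroup, in some base group $M_i$ with $i \le 138$, by the construction of $\mathcal A_{\rm base}$ in the proof of Theorem \ref{thm 1+2}. Hence $X_{H^{\ag}}$ is an abelian cover of some $X_{M_i}$, and the proof of Theorem \ref{thm: twist parameterized explained} shows that all of $X_{M_i}(\Q)$ is explained. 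Two applications of axiom (\ref{twistlift}), first $X_{H^{\ag}} \to X_{M_i}$ and then $X_H \to X_{H^{\ag}}$, then explain $y$.

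The main obstacle is justifying the claim that each agreeable group with infinitely many points lies in an abelian relationship over some $M_i$ in the list. This is asserted in the proof of Theorem \ref{thm 1+2} via Zywina's computation of $A_\infty$ and $\mathcal A_{\rm base}$, and I would cite it there. Note that the argument then uses the conjecture only to identify the complete list $A_\infty$. In fact Theorem \ref{thm:zywina-finite-Q}(b) is unconditional, so the corollary may even hold unconditionally.
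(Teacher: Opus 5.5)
Your final argument (the ``cleaner route'') is correct, and it differs from the paper's. The paper argues point by point. By Lemma~\ref{lem: reduction to agreeable twist isolated}, an unexplained point must have a twist-isolated $j$-invariant. By Theorem~\ref{thm: 41 twist isolated points}, which is where Conjecture~\ref{conj: GIC} enters, there are only $41$ of these, so all but finitely many points of $X_H(\Q)$ are explained. Since $X_H$ has genus $0$ or $1$, Lemmas~\ref{lem: genus 0 collinearity} and \ref{lem: genus 1 collinearity} then cover the remaining finitely many points, using that an infinite finitely generated abelian group is generated by any cofinite subset.

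You argue curve by curve instead. The equality $[H,H]=[H^{\ag},H^{\ag}]$ makes $X_H\to X_{H^{\ag}}$ abelian, and $H^{\ag}$ lies in $A_\infty$ by Theorem~\ref{thm:zywina-finite-Q}(b). By the $\mathcal A_{\rm base}$ computation, $H^{\ag}$ is then contained in some $M_i$ with $i\le 138$ and $[M_i,M_i]=[H,H]\le H$. So $X_H\to X_{M_i}$ is itself abelian, and one application of axiom~(\ref{twistlift}) carries over the explanation of all of $X_{M_i}(\Q)$ given in the proof of Theorem~\ref{thm: twist parameterized explained}.

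Your route avoids Theorem~\ref{thm: 41 twist isolated points}, Faltings, and the cofinite-generation step. It is also genuinely unconditional, not merely ``possibly'' so. Theorem~\ref{thm:zywina-finite-Q}(b), the $\mathcal A_{\rm base}$ computation, and the verification on the $138$ base curves are all unconditional; the paper says as much inside the proof of Theorem~\ref{thm: twist parameterized explained}. You should therefore drop your remark that the conjecture is used ``to identify the complete list $A_\infty$.'' The paper's route, in turn, is a two-line consequence of the twist-isolated machinery it had already built, at the price of assuming the conjecture.

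Delete the opening plan entirely. The claim that $j_0$ cannot be twist isolated is false. For example, $X(1)(\Q)$ is infinite and contains the non-special point with $j=-7\cdot 11^3$, and this $j$-invariant is twist isolated because it appears on $X_0(37)$. This is exactly why the reduction lemma only yields ``all but finitely many'' in the paper's proof, and why your argument, which never looks at $j(y)$, is the right one.

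Also, in that abandoned paragraph the inequality is reversed. From $G_E\le H$ you get $[G_E,G_E]\le[H,H]=[H^{\ag},H^{\ag}]$. So the obstruction is that $[H^{\ag},H^{\ag}]$ can be strictly \emph{larger} than $[G_E,G_E]$, not smaller.
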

\begin{proof}
    Note that $X_H$ has genus $0$ or $1$. By Lemma \ref{lem: reduction to agreeable twist isolated} and Theorem \ref{thm: 41 twist isolated points}, all but finitely many rational points on $X_H$ are explained. It now follows from Lemmas \ref{lem: genus 0 collinearity} and \ref{lem: genus 1 collinearity} that all of $X_H$ is explained. (In the genus $1$ case we are using that an infinite, finitely generated abelian group is generated by any cofinite subset.)  
\end{proof}

\subsection{Genus 1}
The following curves $X_{G_{j}^{\ag}}$ of genus $1$ from Table \ref{table:twist isolated} are explained by collinearity via Lemma \ref{lem: genus 1 collinearity}.
\begin{itemize}
\item  The group of rational points on \href{https://beta.lmfdb.org/ModularCurve/Q/12.32.1.b.1/}{12.32.1.b.1} is generated by the four rational cusps, so by Lemma \ref{lem: genus 1 collinearity}, all rational points are explained. This curve has two rational non-special points.

\item The four rational points of \href{https://beta.lmfdb.org/ModularCurve/Q/20.24.1.g.1/}{20.24.1.g.1} are generated by the two rational cusps. 
\item The four rational points of \href{https://beta.lmfdb.org/ModularCurve/Q/17.18.1.a.1/}{$X_0(17)$} are generated by the two cusps.

\end{itemize}
The following $j$-invariants are explained by push-forward.
\begin{itemize}
    \item $X =$ \href{https://beta.lmfdb.org/ModularCurve/Q/12.24.1.h.1/}{12.24.1.h.1} has exactly one rational special point and one rational non-special point. However it is isomorphic to the modular curve \href{https://beta.lmfdb.org/ModularCurve/Q/12.6.1.a.1/}{12.6.1.a.1} which has two rational special points. By axiom (\ref{pushforward}) the rational points in $X(\Q)$ are explained.     

\item $X =$ \href{https://beta.lmfdb.org/ModularCurve/Q/12.24.1.g.1/}{12.24.1.g.1} has exactly one rational special point and one rational non-special point. However it is isomorphic to the modular curve \href{https://beta.lmfdb.org/ModularCurve/Q/12.18.1.i.1/}{12.18.1.i.1} which has two rational cusps. By axiom (\ref{pushforward}) the two rational points in $X(\Q)$ are explained.

\end{itemize}

Finally, the curve $X =$ \href{https://beta.lmfdb.org/ModularCurve/Q/12.48.1.q.1/}{12.48.1.q.1} is an elliptic curve of rank $0$ where  $X(\Q) \simeq \Z/2 \Z \times \Z/2\Z$. 
The four rational points correspond to two rational cusps and two non-special points with $j= 2^{-6} \cdot 3^{3} \cdot 5^{3}$. The cusps generate a subgroup isomorphic to $\Z/2\Z \times \{0\}$, so the non-special points are not explained by collinearity. 
However, $X$ is a degree 2 abelian cover of \href{https://beta.lmfdb.org/ModularCurve/Q/12.24.1.o.1/}{12.24.1.o.1}, all of whose rational points are generated by rational special points, which explains the non-special rational points on $X$.

\subsection{Genus 2}
When $X_{G_j^{\ag}}$ has genus $2$, its rational points can all be explained either using the automorphism group of the curve (i.e.\ axiom (\ref{pushforward})) or by abelian lift of a genus 1 curve.

The following twist-isolated $j$-invariants are explained by abelian lift of genus 1 curves.
\begin{itemize}
\item $-2^{4}\cdot 3^{2}\cdot 13^{3}$ and $2^{4}\cdot 3^{3}$ on \href{https://beta.lmfdb.org/ModularCurve/Q/60.40.2.a.1/}{60.40.2.a.1} are abelian lifts from \href{https://beta.lmfdb.org/ModularCurve/Q/15.20.1.a.1/}{15.20.1.a.1}. The latter is a genus $1$ curve with five rational points which are generated by the three rational special points. 
\item $-11\cdot 131^{3}$ and $-11^2$ on \href{https://beta.lmfdb.org/ModularCurve/Q/44.24.2.a.1/}{44.24.2.a.1} are abelian lifts from  \href{https://beta.lmfdb.org/ModularCurve/Q/11.12.1.a.1/}{$X_0(11)$}. The latter has genus $1$ with Mordell--Weil group (taking the cusp $\infty$ to be the identity) of order $5$, generated by the cusp $0$. Hence all rational points are explained by (\ref{collinearity}).  
\end{itemize}
Meanwhile, the following $j$-invariants are explained by automorphisms.
\begin{itemize}
    \item The curve  \href{https://beta.lmfdb.org/ModularCurve/Q/25.75.2.a.1/}{25.75.2.a.1} (conjecturally) 
    has exactly two rational points. There is one special and at most one non-special point with $j = \frac{2^{12}\cdot 3^{3}\cdot 5^{7} 29^3}{7^{5}}$. These points are swapped by the hyperelliptic involution.
    \item The curve $X_{\rm ns}^+(16)$ has 10 rational points \cite{Baran} (c.f. \cite[Remark 1.5]{RouseDZB}), and 8 of them are special. 
The two non-special points are in the  $\Aut(X_{\rm ns}^+(16))$-orbit of CM points, so they are explained.  The curve is bielliptic, and each bielliptic involution takes the non-CM points to CM points.
\item As Ogg observed, the two non-special $j$-invariants on $X_0(37)$ are images under the hyperelliptic involution of rational cusps \cite{mazur78}.

\end{itemize}

\subsection{Genus 3}
\label{subsec:genus3}
For all but one curve $X_{G_j^{\ag}}$ in Table \ref{table:twist isolated} of genus 3, we can explain its rational points by abelian lifts, non-trivial automorphisms, or some combination of both.  In more detail: 
\begin{itemize}
\item \href{https://beta.lmfdb.org/ModularCurve/Q/120.48.3.c.1/}{120.48.3.c.1} is a double cover of \href{https://beta.lmfdb.org/ModularCurve/Q/15.24.1.a.1/}{$X_0(15)$}, an elliptic curve of rank 0 whose rational points are generated by cusps. 
This explains the $j$-invariants $-\frac{5^{2}\cdot 241^{3}}{2^{3}}$, $-\frac{5\cdot 29^{3}}{2^{5}}$, $-\frac{5^{2}}{2}$, and $\frac{5\cdot 211^{3}}{2^{15}}$.
\item \href{https://beta.lmfdb.org/ModularCurve/Q/120.72.3.gqp.1/}{120.72.3.gqp.1} is a double cover of \href{https://beta.lmfdb.org/ModularCurve/Q/15.36.1.b.1/}{15.36.1.b.1},  an elliptic curve of rank 0. We check that its rational points are generated by cusps and hence are explained by (\ref{collinearity}). This explains the $j$-invariants $-\frac{29^{3}\cdot 41^{3}}{2^{15}}$ and $\frac{11^3}{2^3}$.
\item \href{https://beta.lmfdb.org/ModularCurve/Q/168.64.3.a.1/}{168.64.3.a.1} is a double cover of \href{https://beta.lmfdb.org/ModularCurve/Q/21.32.1.a.1/}{$X_0(21)$}.  We check that $X_0(21)(\Q)$ is generated by cusps and hence its points are explained by Lemma \ref{lem: genus 1 collinearity}.

This explains the $j$-invariants $-\frac{3^{3}\cdot 5^{3}\cdot 383^{3}}{2^{7}}$, $-\frac{3^{2}\cdot 5^{6}}{2^{3}}$, $-\frac{3^{2}\cdot 5^{3}\cdot 101^{3}}{2^{21}}$, and $\frac{3^{3}\cdot 5^{3}}{2}$.
\item $X_3 =$ \href{https://beta.lmfdb.org/ModularCurve/Q/28.64.3.b.1/}{28.64.3.b.1} is a double cover of the genus 2 curve $X_2 =$ \href{https://beta.lmfdb.org/ModularCurve/Q/28.32.2.a.1/}{28.32.2.a.1}, which has two rational points with $j$-invariants $-\frac{3^{3}\cdot 13\cdot 479^{3}}{2^{14}}$ and  $\frac{3^{3}\cdot 13}{2^{2}}$, as well as two rational cusps. We check that $\#\Aut(X_2) = 4$ and $X_2(\Q)$ is a single $\Aut(X_2)$-orbit. Thus, the rational points with these $j$-invariants on $X_2$ are explained, and hence the corresponding rational points on the abelian cover $X_3$ are also explained. 
\item $X =$ \href{https://beta.lmfdb.org/ModularCurve/Q/20.60.3.w.1/}{20.60.3.w.1} has a rational point with $j$-invariant $-\frac{3^{3}\cdot 5^{4}\cdot 11^{3}\cdot 17^{3}}{2^{10}}$. We have $\Aut(X) \simeq \Z/2\Z$ and $\#X(\Q) = 6$, with $5$ of these rational points special. The involution takes the rational non-special point to a point with $j$-invariant $0$, so all rational points are explained.
\end{itemize}

There remains exactly one genus $3$ agreeable modular curve whose points need explaining: the curve $X = X_{S_4}(13) = $ \href{https://beta.lmfdb.org/ModularCurve/Q/13.91.3.a.1/}{13.91.3.a.1}. In the canonical model 
\[5x^4 - 7x^3y + 8x^3z + 3x^2y^2 - 7x^2yz + 4x^2z^2 + 2xy^3 - 2xy^2z - 5xyz^2 - 3xz^3 + 5y^3z + y^2z^2 + 2yz^3 = 0 \]
in $\PP^2$, the curve $X$ has the four rational points $y_0 = (0 : 1 : 0)$, $y_1 = (1: -2:1)$, $y_2 = (0 :0 : 1)$, and $y_3 = (-1: -1:1)$. The $j$-invariants are 
    \[j(y_0) = 2^4 \cdot 3^{-13} \cdot 5 \cdot 13^4  \cdot 17^3,\]
     \[j(y_1) = 2^{18} \cdot 3^3\cdot 5^{-13}\cdot 13^4 \cdot 61^{-13} \cdot 127^3\cdot  139^3 \cdot 157^3\cdot  283^3\cdot  929,\]
    \[j(y_2) =-2^{12}  \cdot 3^{-13}\cdot  5^3 \cdot 11\cdot  13^4,
\]
and $j(y_3) = 0$. The CM point $y_3$ is special and therefore explained.  Axiom $(\ref{collinearity})$ states in this case that if $y_i$ is $\Q$-collinear with three explained points, then it is itself explained.  We check that 
\[X \cap \{x+2z = 0\} = y_0 + x_1,\] 
where $x_1$ is a degree 3 CM point with $j$-invariant $287496 = 2^3\cdot3^3\cdot11^3$, so $y_0$ is explained.  Meanwhile, 
\[X \cap \{x -z = 0\} = y_0 + y_1 + x_2,\]
where $x_2$ is a quadratic CM point with $j$-invariant $1728$; hence $y_1$ is explained.  

It remains to explain $y_2$, for which collinearity relations do not suffice. To set things up, recall (Example \ref{ex: twists of Xns(2)}) that for each squarefree integer $d$,
the modular curve $X^d_{\mathrm{ns}}(2)$ parameterizes elliptic curves together with a choice of a square root of $d\Delta(E)$.  Then $y_2$ lifts to two rational points on 
\[\tilde{X} = X_{S_4}(13) \times_{X(1)} X^{-3}_{\mathrm{ns}}(2)\] since the discriminant of $E_{j(y_2)}$ is $-3$ times a square. This is the genus $10$ curve \href{https://beta.lmfdb.org/ModularCurve/Q/78.182.10.a.1/}{78.182.10.a.1}. Interestingly, the CM point $y_3$ also lifts to two rational points on $\tilde X$ (this can be seen by ``pure thought'' since $j(y_3) = 0$ and the discriminant of any polynomial $x^3 + k$ is $-27k^2$).  Let $x_2, x_2'$ be the lifts of the non-special point $y_2$ to $\tilde{X}$ and let $x_3, x_3'$ be the lifts of the CM-point $y_3$. We will explain $x_2,x_2'$ by fiber splicing.  Using that $\tilde X$ is a quadratic twist of $X_{S_4}(13) \times_{X(1)} X_{\mathrm{ns}}(2) =$ \href{https://beta.lmfdb.org/ModularCurve/Q/26.182.10.a.1/}{26.182.10.a.1}, we see from the newform decomposition that $\tilde X$ admits maps $f_i \colon \tilde{X} \to E_i$ to four different elliptic curves $E_i$, all of which have analytic rank $1$. We may take $E_1 =$ \href{https://www.lmfdb.org/EllipticCurve/Q/6084/f/1}{6084.f1} and $E_2 =$ \href{https://www.lmfdb.org/EllipticCurve/Q/468/b/1}{468.b1}. Using code of Mayle--Rouse \cite{MayleRouse}, we find that $f_1(x_2) = f_1(x_3)$ and $f_1(x_2') = f_1(x_3')$, while the images of the four points under $f_2$ are all distinct. It follows that $f_1(x_2)$ and $f_1(x_2')$ are explained on $E_1$. To see that $f_2(x_2)$ and $f_2(x_2')$ are explained, we check that $E_2$ is isomorphic to the  modular curve \href{https://beta.lmfdb.org/ModularCurve/Q/78.56.1.b.1/}{78.56.1.b.1}. 
Since $E_2$ has rank $1$, all of its rational points are explained by Corollary \ref{cor: X_H infinite is explained}, and so $f_2(x_2)$ and $f_2(x_2')$ are indeed explained. Finally, we compute that $f_1^{-1}(f_1(x_2)) \cap f_2^{-1}(f_2(x_2)) = \{x_2\}$ and similarly for $x_2'$, showing that $x_2$ and $x_2'$ are explained by (\ref{fibersplice}). Hence $y_2$ is explained by $(\ref{pushforward})$. 

To perform the above computations, we had to identify which pair of points  $x_2,x_2'$ and $x_3,x_3'$ are the lifts of $y_2$ and $y_3$ respectively. (This is not a priori clear since the code used to generate a model for the curve $\tilde X$ did not come with a map to $X_{S_4}(13)$.) For this, we found an involution $\iota \in \Aut(\tilde X)$ such that $\tilde{X}/\iota \simeq X_{S_4}(13)$. We now argue that there is a unique involution on $\tilde X$ with this property and hence the quotient $\tilde X \to \tilde X/\iota$ is the modular map. 
The involution on $\tilde{X}$ induces an involution on the Jacobian. 
Recalling that $\tilde{X}$ is a twist of the curve $X'= $ \href{https://beta.lmfdb.org/ModularCurve/Q/26.182.10.a.1/}{26.182.10.a.1}, we deduce that the  Jacobian of $\tilde{X}$ factors up to isogeny as $J_X \times A_3 \times E_1 \times E_2 \times E_3 \times E_4$, where $A_3$ is an abelian threefold, and where the factors are all simple and pairwise non-isogenous. 
It follows that the involution is given by a block diagonal matrix with $\pm 1$ in each block. But the $+1$-eigenspace is exactly $J_X$ so it must induce $-1$ on the other factors. In other words, the involution is uniquely determined in $\mathrm{End}^0(J_{\tilde{X}})$ and hence uniquely determined in $\Aut(\tilde X)$.

\subsection{Genus 4}

The genus 4 curve \href{https://beta.lmfdb.org/ModularCurve/Q/100.100.4.d.1/}{100.100.4.d.1} with a rational point with $j = 2^4 \cdot 3^2\cdot 5^7 \cdot 23^3$ is an abelian cover of the genus 2 curve \href{https://beta.lmfdb.org/ModularCurve/Q/25.50.2.a.1/}{25.50.2.a.1}.
This curve has at least two rational points, and exactly one non-special point corresponding to $j = 2^4 \cdot 3^2\cdot 5^7 \cdot 23^3$. The hyperelliptic involution sends the non-special point to a CM point with $j = 0$.

\subsection{Genus 6}\label{subsec: genus 6}

There are two genus $6$ agreeable curves $X$ in Table \ref{table:twist isolated}.  We explain their non-special rational points by fiber splicing two maps $f_1 \colon X \to E_1$ and $f_2 \colon X \to E_2$ as in axiom $(\ref{fibersplice})$. 

\begin{itemize}
    \item The curve $X = $ \href{https://beta.lmfdb.org/ModularCurve/Q/36.108.6.g.1/}{36.108.6.g.1} has a unique non-special point $x \in X(\Q)$ and two rational CM points $x_0,x_0' \in X(\Q)$. The Jacobian $J_X$ is isogenous to $E_1 \times E_2 \times E_3 \times E_4 \times S$ for an abelian surface $S$. Some of these  factors are modular curves, e.g.\ we may take the degree $9$ map $f_2 \colon X \to E_2$ where $E_2 = $ \href{https://beta.lmfdb.org/ModularCurve/Q/36.36.1.e.1/}{36.36.1.e.1}, a rank $1$ elliptic modular curve. There is exactly one factor, $E_1 = $ \href{https://beta.lmfdb.org/EllipticCurve/Q/432/f/1}{432.f1}, with rank $0$. In fact, we have $E_1(\Q) = \{0\}$ and the map $f_1 \colon X \to E_1$ has degree $6$. Since we have a rational CM point on $X$, the point $f_1(x)$ is explained regardless of whether $E_1$ is a modular curve or not. The point $f_2(x)$ is explained because $E_2$ is a modular curve all of whose rational points are explained by Corollary \ref{cor: X_H infinite is explained}.  Using code of Mayle--Rouse \cite{MayleRouse}, we check that $f_1^{-1}(f_1(x)) \cap f_2^{-1}(f_2(x)) = \{x\}$, so $x$ is explained by (\ref{fibersplice}). 
    \item The curve $X = $  \href{https://beta.lmfdb.org/ModularCurve/Q/20.120.6.a.1/}{20.120.6.a.1} has two non-special rational points $x,x' \in X(\Q)$ but no rational special points. However, $X$ admits a map $f_1 \colon X \to E_1$ where $E_1 = $ \href{https://www.lmfdb.org/EllipticCurve/Q/80/b/4}{80.b4} $=$ \href{https://beta.lmfdb.org/ModularCurve/Q/20.36.1.a.1/}{20.36.1.a.1}, which has two rational points, both cusps.   We find that $f_1(x) = f_1(x')$. For the curve $E_2$ we can take the modular curve \href{https://beta.lmfdb.org/ModularCurve/Q/20.20.1.c.1/}{20.20.1.c.1} = \href{https://beta.lmfdb.org/EllipticCurve/Q/400/a/1}{400.a1} of rank $1$, and we compute $f_2(x) \neq f_2(x')$. More precisely, $f^{-1}_1(f_1(x)) \cap f_2^{-1}(f_2(x)) = \{x\}$ and $f^{-1}_1(f_1(x')) \cap f_2^{-1}(f_2(x')) = \{x'\},$ 
    for this choice of $f_2$. 
    This shows that both $x$ and $x'$ are explained by fiber splicing.
\end{itemize}

\subsection{Genus 7}\label{subsec: genus 7}
The $j$-invariants $-2^{6}\cdot 719^{3}$ and $2^6$ have the same genus 7 group $G_j^{\ag}$. 
The corresponding modular curve is an abelian cover of the genus 2 curve $X_2 =$ \href{https://beta.lmfdb.org/ModularCurve/Q/45.54.2.c.1/}{45.54.2.c.1}.
There are two rational cusps on $X_2$ and two rational points with the aforementioned $j$-invariants. We have $\#\Aut(X_2) = 4$ and each rational point is in the $\Aut(X_2)$-orbit of the rational cusps. Thus, these two $j$-invariants are fully explained.

It remains to explain the rational points on the genus $7$ agreeable curve \href{https://beta.lmfdb.org/ModularCurve/Q/30.120.7.e.1}{30.120.7.e.1}. This is a degree 2 abelian cover of the genus 4 curve
$X =$ \href{https://beta.lmfdb.org/ModularCurve/Q/30.60.4.a.1/}{30.60.4.a.1}, and it suffices to explain the $j$-invariant $j=- 3^{3} \cdot 5^{4} \cdot 11 \cdot 19^{3}$ on $X$.
The curve $X$ has the canonical model in $\PP^3$
    \[ x^2 - yz = 20y^3 + 15xyz - 5z^3 - 30xyw + 6zw^2 + w^3 = 0.\]
Note the rational points $y_0 = (0 : 0 : -1 : 1)$ and $y_1 = (-1: 1 : 1 : 0)$. We have $j(y_0) = 0$ whereas $y_1$ is the unique non-special rational point on $X$ with $j$-invariant $-3^{3} \cdot 5^{4} \cdot 11 \cdot 19^{3}$. 
Since $X$ is contained in the singular quadric $x^2 - yz$, it admits a unique $g^1_3$, i.e.\ isomorphism class of degree $3$ map $f \colon X \to \PP^1$. We find that  $f = (x \colon y)$
and $f^{-1}(-1\colon 1)$ contains $y_1$ and a degree $2$ CM closed point $x_1$ (with CM  by $\Z[\sqrt{-7}]$ and $j$-invariant $16581375$).  This is already a geometric explanation of $y_1$ but we can relate it to our general rules. Indeed, $y_1$ is coplanar with five special points:
\[K_X \equiv \{y + x = 0\} \cap X = y_0 + y_1 + x_1 + x_2,\] 
where $x_2$ is the degree two cuspidal divisor. So $y_1$ is explained by rule $(4)$. 

\subsection{Summary}
Table \ref{table: explanations} summarizes how we explain the $j$-invariants in Table \ref{table:twist isolated} by geometry.  ``Genus'' refers to the genus of the curve on which the $j$-invariant is explained not including abelian covers. (It is not necessarily the agreeable genus since the agreeable quotient of $G_j$ may be an abelian cover of another curve).  
Therefore ``abelian lift'' never appears as a reason. 
The multiplicities of the given genus are listed as exponents.
\begin{table}[h]
\centering
\begin{tabular}{lll}
\hline
Reason & Count  & Genus \\
\hline
Push-forward & 13  & $1^2, 2^{10}, 3 $\\
Collinear  & 24  & $0^4, 1^{17}, 3^2, 4$ \\
Fiber splicing & 4 & $6^3, 10$\\
\hline
\end{tabular}
\caption{Summary of geometric explanations}
\label{table: explanations}
\end{table}
Interestingly, in the case of $X_{S_4}(13)$, not all of the $j$-invariants on this curve are explained by the same reason. 

One final observation, in the vein of  \cite{GrossZagier85}, is that the $j$-invariants in Table \ref{table:twist isolated} are all quite smooth. Galbraith \cite{GalbaraithExpMath} noticed similar phenomena for points on $X_0(p)^+ = X_0(p)/w_p$, and some results about cubic factors of $j$-invariants of $\Q$-curves were proven in \cite{GonzalezCubic}.

\section{Families of non-Serre curves}
\label{sec:nonserre}
We give an application of Theorem \ref{thm 1+2} that also shows how to navigate the 160 families of modular curves therein. 
Recall that for an elliptic curve $E$ over $\Q$, the index of $G_E$ satisfies  $[\GL_2(\Zhat) \colon G_E] \geq 2$, 
and $E$ is said to be a \emph{Serre curve} if equality holds.  Jones proved that $100\%$ of elliptic curves over $\Q$ are Serre curves,  when elliptic curves are ordered by a suitable height function \cite{Jones}. In the notation of Theorem \ref{thm 1+2}, Serre curves correspond to the non-special rational points on $X_{M_1}(\Q) = X(1)(\Q)$  that do not lift to a rational point on any other $X_{M_i}$. 

The following result shows that non-Serre curves come in 20 families, each one corresponding to a curve $X_{M_i}$ in Theorem \ref{thm 1+2} that happens to have a ``clean'' moduli interpretation.     

\begin{theorem}
\label{thm:nonSerre}
    Assume Conjecture \ref{conj: GIC} and let $E$ be an elliptic curve over $\Q$ that is not a Serre curve, i.e.\  $[\GL_2(\Zhat) \colon G_E] > 2$. Then at least one of the following holds:
    \begin{enumerate}
        \item $E$ has an $\ell$-isogeny for some  $\ell \in \{2,3,5,7,11,13,17,37\}$,
        \item $\rho_{E,\ell}(\Gal_\Q) \subset \GL_2(\Z/\ell\Z)$ lies in a normalizer of a non-split Cartan, for some $\ell \in \{3,4,5,7,11\}$,
        \item The image of $\rho_{E,\ell}(\Gal_\Q)$ in $\PGL_2(\Z/\ell\Z)$ lies in $S_4$, for some $\ell \in \{5,13\}$, 
        \item $\rho_{E,7}(\Gal_\Q) \subset \GL_2(\Z/7\Z)$ lies in the normalizer of a split Cartan, 
        \item $\mathrm{Disc}(E)$ or $-\mathrm{Disc}(E)$  is a square  in  $\Q^\times$,
        \item $\Q(E[2]) \subset \Q(E[3])$, or
        \item $\Q(E[9]) = \Q(E[3],\zeta_9)$.
    \end{enumerate}
    Conversely, each family above contains at least one non-CM elliptic curve $E$ over $\Q$. 
\end{theorem}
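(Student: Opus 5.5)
We would deduce this from Theorem \ref{thm 1+2}. Let $E$ be a non-CM elliptic curve with $[\GL_2(\Zhat):G_E]>2$. By Theorem \ref{thm 1+2}, $G_E$ is conjugate to $K_i^\chi$ for some $i\in\{1,\dots,160\}$ and some rational point $x\in X_{M_i}(\Q)$ with $j(x)=j(E)$. In particular $G_E\le M_i$ up to conjugacy.

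The family $i=1$ has $M_1=\GL_2(\Zhat)$. Its members $K_1^\chi$ are the Serre subgroups $H_\psi$ of Example \ref{ex: twists of Xns(2)}, which have index $2$. Hence a non-Serre curve with $G_E\in\mathcal F(M_1,K_1)$ must in fact have $G_E$ of index $>2$ inside some $H_\psi$. We handle this by noting that $G_E^{\ag}$ is then a proper agreeable subgroup, so $E$ also lies in a family with $M_i\neq\GL_2(\Zhat)$. Rather than the single index $i$ produced by Theorem \ref{thm 1+2}, we should therefore use the group $G_E^{\ag}$ itself: it is a proper agreeable subgroup of $\GL_2(\Zhat)$ carrying a rational point. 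If $G_E^{\ag}=\GL_2(\Zhat)$, then $[G_E,G_E]=[\GL_2(\Zhat),\GL_2(\Zhat)]$ by Proposition \ref{prop:com_im}, and $G_E\cap\SL_2(\Zhat)$ equals this commutator, which has index $2$ in $\SL_2(\Zhat)$. This forces $E$ to be a Serre curve. So non-Serre forces $G_E^{\ag}\neq\GL_2(\Zhat)$.

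Next, $G_E^{\ag}$ is contained in a \emph{maximal} agreeable proper subgroup $M$ with $X_M(\Q)\neq\emptyset$. Under Conjecture \ref{conj: GIC}, either $X_M(\Q)$ is infinite, in which case $M$ lies in Zywina's finite list $A_\infty$ from Theorem \ref{thm:zywina-finite-Q}, or $j(E)\in J_{\rm exc}$. The key step is a finite enumeration: compute the minimal non-trivial elements of $A_\infty$ under inclusion, i.e. those agreeable groups containing no proper agreeable overgroup other than $\GL_2(\Zhat)$ with infinitely many points. We then identify each one with one of the moduli conditions (1)--(7):
\begin{itemize}
\item Borel groups give condition (1).
\item Normalizers of non-split Cartan subgroups give condition (2), including the level-$4$ one.
\item $S_4$-exceptional groups at $5$ give condition (3).
\item The normalizer of the split Cartan at $7$ gives condition (4).
\item The index-$2$ subgroups $\Ns(2)$ (twisted appropriately) together with the scalar extension give condition (5), since $\pm\mathrm{Disc}$ square corresponds to $G_E\le\{\sgn=\psi(\det)\}$ with $\psi$ trivial or associated to $\Q(i)$.
\item The entanglement groups of level $6$ and level $9$ give conditions (6) and (7).
\end{itemize}
We expect about 20 such groups after merging conjugates and these moduli descriptions.

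For the finitely many exceptional $j\in J_{\rm exc}$, we check directly from Table \ref{table:exc} and the computed $G_j$ that each satisfies one of (1)--(7). For example, $X_{S_4}(13)$ gives the $\ell=13$ part of (3), and $X_0(17)$ and $X_0(37)$ give the $\ell=17,37$ part of (1). The conditions (1)--(7) are chosen so that this check closes.

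For the converse, we exhibit for each condition a non-CM rational point on the corresponding curve. Most of these curves are genus $0$ or positive-rank curves in $A_\infty$; the remaining ones use an explicit $j$ from Table \ref{table:twist isolated}.

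The main obstacle is the middle step. We must verify that every minimal agreeable group with rational points, together with every exceptional $j$, actually implies one of the seven listed moduli conditions. Translating group-theoretic descriptions (especially the entanglement groups at levels $6$, $9$, $12$ and the level-$4$ Cartan) into clean statements such as $\Q(E[2])\subset\Q(E[3])$ requires care and a computer check against the LMFDB labels.
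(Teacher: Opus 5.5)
\textbf{There is a genuine gap in the route you switch to.} Your opening move was the right one, and it is the paper's proof. Every member of $\mathcal F(M_1,K_1)$ has the same intersection with $\SL_2(\Zhat)$ as $K_1$ and surjective determinant, so it has index $2$. There is therefore no ``non-Serre curve with $G_E\in\mathcal F(M_1,K_1)$'': Theorem~\ref{thm 1+2} puts every non-Serre $E$ in some $\mathcal F(M_i,K_i)$ with $2\le i\le 160$. The paper then checks by computer that each such $X_{M_i}$ maps to one of the twenty curves $X_0(\ell)$, $X_{\rm ns}^+(\ell)$, $X_{S_4}(\ell)$, $X_{\rm sp}^+(7)$, $X_{\rm ns}(2)$, 4.2.0.a.1, 6.6.0.b.1, 9.27.0.a.1. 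The twist-isolated $j$ are covered automatically by $i\ge 139$.

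From the non-Serre hypothesis you only extract $G_E^{\ag}\neq\GL_2(\Zhat)$, which is strictly weaker. Being non-Serre is equivalent to $G_E\cap\SL_2(\Zhat)=[G_E^{\ag},G_E^{\ag}]\subsetneq[\GL_2(\Zhat),\GL_2(\Zhat)]$. That is a condition on the commutator, not on the agreeable closure.

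Concretely, let $\psi_{\pm2}$ be the characters of $\Q(\sqrt{\pm2})$, which have conductor $8$, and consider the Serre subgroups $H_{\psi_{\pm2}}$ of Example~\ref{ex: twists of Xns(2)}.
\begin{enumerate}
\item They contain all scalars and have level $8$. Their commutators lie in $\SL_2(\Zhat)\cap\ker\sgn$, so they have even $\SL_2$-level. Hence they are agreeable.
\item They have index $2$, so they are maximal.
\item $X_{H_{\psi_{\pm2}}}$ is the rational curve $j=1728\pm 2t^2$, which has infinitely many rational points.
\end{enumerate}
So these two groups are among the maximal elements of $A_\infty$ that your enumeration produces. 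You list only the trivial and $\Q(i)$ twists of $\Ns(2)$.

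The condition they encode, $\pm2\,\mathrm{Disc}(E)\in\Q^{\times2}$, implies none of (1)--(7). By Hilbert irreducibility on $X_{H_{\psi_{\pm2}}}\cong\PP^1$, almost all such $E$ lift to none of the finitely many relevant covers. In fact the generic such curve is a Serre curve with $G_E=G_E^{\ag}=H_{\psi_{\pm2}}\neq\GL_2(\Zhat)$. This is why no level-$8$ index-$2$ group appears among the $M_i$ in Table~\ref{table:bigtableofgroups}: its family is absorbed into family $1$. By contrast $H_{\psi_{-1}}=M_9$ has a strictly smaller commutator, which is why only $\pm\mathrm{Disc}(E)$ appears in (5).

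Consequently, whenever your chosen maximal overgroup $M$ is $H_{\psi_{\pm2}}$, the argument proves nothing. To rule out a non-Serre curve whose only maximal agreeable overgroup is $H_{\psi_{\pm2}}$, you must compare $[G_E^{\ag},G_E^{\ag}]$ with $[M,M]$. That comparison is exactly the family structure $\mathcal F(M_i,K_i)$ you set aside.
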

\begin{proof}
    First we observe that the properties in (1)-(7) are the moduli interpretations of twenty different curves $X_{M_i}$. Namely, the first four cases correspond to the curves $X_0(\ell)$, $X_{\mathrm{ns}}^+(\ell)$, $X_{S_4}(\ell)$, and $X_{\mathrm{sp}}^+(7)$, respectively. The final three are covered by the following lemmas.

     \begin{lemma}
        $\mathrm{Disc}(E) \in \Q^{\times2}$ if and only if $E$ gives rise to a rational point on $X_{\mathrm{ns}}(2)$. Similarly, $-\mathrm{Disc}(E) \in \Q^{\times 2}$ if and only if $E$ gives rise to a rational point on $X_H$, where $H = \href{https://beta.lmfdb.org/ModularCurve/Q/4.2.0.a.1/}{4.2.0.a.1}$.
    \end{lemma}
    \begin{proof}
    This follows from Example \ref{ex: twists of Xns(2)}, once we observe that $X_H$ is the $-1$-twist of $X_{\mathrm{ns}}(2)$.
    \end{proof}

    \begin{lemma}
        Suppose $\Q(E[2]) \subset \Q(E[3])$ and properties $(1)$ and $(5)$  do not hold. Then $E$ gives rise to a rational point on the curve $X_H$, where $H = \href{https://beta.lmfdb.org/ModularCurve/Q/6.6.0.b.1/}{6.6.0.b.1}$.
    \end{lemma}
    \begin{proof} This follows from \cite[Theorem 1.4.]{BrauJones16} (see \cite[Remark 1.9]{JonesMcMurdy22} for a correction).  We sketch the argument here.  
    If $\rho_{E,2}$ is not surjective, then $E$ gives rise to a point on either $X_0(2)$ or $X_{\mathrm{ns}}(2)$, which appear in $(1)$ and $(5)$. So we may assume $\rho_{E,2}$ is surjective.
        Let $f$ be the map $\GL_2(\F_3) \to S_4 \to S_3 \simeq \GL_2(\F_2)$, where the first map is the unique double cover. The lemma follows from the fact that $H$ is the pre-image of the subgroup $\{(f(h), h)\} \subset \GL_2(\F_2) \times \GL_2(\F_3)$ in $\GL_2(\Zhat)$.
    \end{proof}
    
    \begin{lemma}\label{lem: elkies curve}
        Let $\zeta_9$ be a primitive $9$-th root of unity. Suppose $\Q(E[9]) = \Q(E[3],\zeta_9)$ and $(1)$ and $(2)$ do not hold. Then $E$ gives rise to a rational point on the curve $X_H$, where $H = \href{https://beta.lmfdb.org/ModularCurve/Q/9.27.0.a.1/}{9.27.0.a.1}$. 
    \end{lemma}

\begin{proof}
If $\rho_{E,3}(\Gal_\Q) \neq \GL_2(\F_3)$, then $E$ gives rise to a point on  $X_{\mathrm{ns}}^+(3)$ or $X_0(3)$,
which appear in $(1)$ and $(2)$. Otherwise, $\rho_{E,9}(\Gal_\Q) \subset \GL_2(\Z/9\Z)$ is contained in a level 9 subgroup $H' \subset \GL_2(\Z/9\Z)$ such that $X_{H'}$ minimally covers $X(1)$. One checks in the LMFDB that $H$ is the unique such subgroup up to conjugacy, so $H' = H$ and $E$ gives rise to a rational point on $X_H$. 
\end{proof}

\begin{remark}
    {
    Equivalently, $X_H$ parameterizes elliptic curves $E$ with $\rho_{E,3}$ surjective but $\rho_{E,9}$ not surjective; see \cite{Elkies2006}. 
    }
\end{remark}

    Now suppose $E$ is any non-CM elliptic curve. By Theorem \ref{thm 1+2}, it gives rise to a rational point on one of the $160$ modular curves $X_{M_i}$. To prove Theorem \ref{thm:nonSerre}, it is enough to show that either $X_{M_i}$ maps to one of the $20$ modular curves described above or $E$ is a Serre curve. We \gitlink{NonSerre.m}{check} 
    that this is the case for all $X_{M_i}$ except $X_{M_1} = X(1)$. This is good enough since, as we remarked above, all curves in the family $X_{K_1}^\chi = X_{\mathrm{ns}}(2)^\chi$ of double covers of $X(1)$ parameterize Serre curves, except for the two exceptions in case $(5)$. 
\end{proof}

\section{Twist families of super-sporadic points}\label{sec: super-sporadic}

A special type of isolated point is a \emph{sporadic} point. In this section we discuss sporadic and super-sporadic points on modular curves.

\begin{definition}
    For a reduced curve $X$ over $\Q$, we define $\delta(X)$ to be the least $d \in \mathbb{Z}^{+}$ such that $X(\overline \Q)$ contains infinitely many points of degree $d$, and then call a point $p \in X(\overline  \Q)$ \textit{sporadic} if $\deg p < \delta(X)$. We call a point $p$ on a modular curve $X_H$ \textit{super-sporadic} if for every $G \leq H $, every point of $X_G(\Qbar)$ lying over $p$ is sporadic.
\end{definition}

In \cite[Section 7]{BELOV} the authors consider whether a sporadic point $x\in X_1(N)$ lifts to a sporadic point on a $X_1(dN)$ in the sense that any $y\in X_1(dN)$ such that $\pi(y)=x$ is sporadic, where 
$$\pi: X_1(dN)\rightarrow X_1(N)$$ is the natural projection map. They show that if $x$ is a CM point then this is true for all $N$ prime and sufficiently large, and all $d$ \cite[Theorem 7.1]{BELOV}.

One naturally wonders whether there exists a non-CM sporadic point $x\in X_1(N)$ for some integer $N$ that lifts to a sporadic point on infinitely many $X_1(dN)$. They show that this is true if $\deg x$ is small compared to the index of $\Gamma_1(N)$ in 
$\PSL_2(\Z)$, see \cite[Lemma 7.2]{BELOV}
\begin{equation} \label{BELOV:bound}
    \deg x <\frac {7}{1600}[\PSL_2(\Z):\Gamma_1(N)].
\end{equation}
However, no known non-CM points satisfy this bound (and we expect there to be no points with rational $j$-invariant that satisfy this bound). 

One can ask the same question for general modular curves. 
In \cite[Theorem 7.6]{CGPS22} it is proved that for $N$ large enough, the modular curves $X_0(N)$ and $X_1(M,N)$ have super-sporadic CM points. 

\begin{question}
    Do there exist non-CM (rational) super-sporadic points on modular curves $x\in X_G(\Q)$?
\end{question}

We give a positive answer to this question, and produce twist families of examples. For a subgroup $G\leq \GL_2(\Zhat)$ we denote by $\tilde G\leq \PSL_2(\Z)$, obtained by first reducing $G$ modulo its level $N$, taking the intersection with $\SL_2(\Z/N\Z)$, then taking the inverse image of it with respect to $\red_N:\SL_2(\Z)\rightarrow\SL_2(\torz{N})$, and finally taking the quotient of $\pm$ this group by $\diamondop{-I}$. The modular curve $X_G$ is isomorphic over $\C$ to $\tilde G \backslash  \mathcal G^*=:X_{\tilde G}$. Note that we have
$$\gon_\C X_{\tilde G }=\gon_\C X_{G}\leq \gon_\Q X_G,$$ for all modular curves $X_G$ defined over $\Q$.

\begin{lemma} \label{lem:index}
    Let $G\leq \GL_2(\Zhat)$ be an open group with surjective determinant. Then
    $$[\PSL_2(\Z):\tilde G]=\alpha [\GL_2(\Zhat):G], \quad \text{ where } \alpha =\begin{cases} 2 \text{ if } I \notin G\\ 1 \text{ if } I \in G
        
    \end{cases}.$$
\end{lemma}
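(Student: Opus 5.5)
The plan is to compute both indices through the finite group $\SL_2(\Z/N\Z)$, where $N$ is the level of $G$. Before that, a remark on how the hypothesis is read. The identity matrix always lies in $G$, so read literally the statement has $\alpha=1$. It then asserts $[\PSL_2(\Z):\tilde G]=[\GL_2(\Zhat):G]$. The only interesting distinction is whether $-I\in G$, and I will track that case separately. I expect the computation itself to be routine; the real difficulty is matching the bookkeeping of $\pm I$ to the stated normalization of $\alpha$.

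\emph{Step 1: $\GL_2$-index equals $\SL_2$-index.} Let $G_N=\pi_N(G)$. Since $G$ contains $\ker\pi_N$, we have $[\GL_2(\Zhat):G]=[\GL_2(\Z/N\Z):G_N]$. Since $\det G=\Zhat^\times$, the determinant maps $G_N$ onto $(\Z/N\Z)^\times$. Comparing the exact sequences $1\to \SL_2\to\GL_2\xrightarrow{\det}(\Z/N\Z)^\times\to 1$ for $\GL_2(\Z/N\Z)$ and for $G_N$ gives
\[[\GL_2(\Zhat):G]=[\SL_2(\Z/N\Z):G_N\cap\SL_2(\Z/N\Z)].\]
This is the same argument as the one used in the proof of Proposition~\ref{prop: bounded index}.

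\emph{Step 2: pass to $\SL_2(\Z)$.} Let $\Gamma=\red_N^{-1}(G_N\cap \SL_2(\Z/N\Z))\le \SL_2(\Z)$. The reduction map $\red_N\colon\SL_2(\Z)\to\SL_2(\Z/N\Z)$ is surjective (strong approximation for $\SL_2$). It therefore induces a bijection of coset spaces, so $[\SL_2(\Z):\Gamma]=[\GL_2(\Zhat):G]$.

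\emph{Step 3: pass to $\PSL_2(\Z)$.} By definition $\tilde G=\pm\Gamma/\langle -I\rangle$, so $[\PSL_2(\Z):\tilde G]=[\SL_2(\Z):\pm\Gamma]$. If $-I\in G$, then $-I\in\Gamma$, and $[\PSL_2(\Z):\tilde G]=[\SL_2(\Z):\Gamma]=[\GL_2(\Zhat):G]$. This is the stated identity with $\alpha=1$. If $-I\notin G$, then $-I\notin \Gamma$, since $-I$ reduces to $-I\in\SL_2(\Z/N\Z)$ and $-I\notin G_N$ because $\ker\pi_N\le G$. Hence $[\pm\Gamma:\Gamma]=2$.

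\emph{Main obstacle.} The factor of $2$ in the case $-I\notin G$ is where I expect trouble. Step 3 gives $[\GL_2(\Zhat):G]=2[\PSL_2(\Z):\tilde G]$, that is, the factor $2$ lands on the $\GL_2$ side rather than on the $\PSL_2$ side as the statement is worded. So the statement as printed can hold only when $-I\in G$, which is the only case the literal reading ($\alpha=1$ always) requires. I would therefore first check the intended normalization (for instance, whether $\tilde G$ is meant to be $\Gamma$ itself inside $\SL_2(\Z)$, or whether $G$ is implicitly replaced by $\pm G$). I would fix that convention before using the lemma in any gonality bound, since the direction of the factor $2$ changes the inequality obtained.
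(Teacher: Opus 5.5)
Your Steps 1--3 follow the same chain as the paper: reduce to level $N$, use surjectivity of $\det$ to pass to $\SL_2(\Z/N\Z)$, use surjectivity of $\red_N$ to pass to $\SL_2(\Z)$, then pass to $\PSL_2(\Z)$. Your diagnosis of the factor $2$ is correct.

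The paper's proof goes wrong at its very first equality, $[\PSL_2(\Z):\tilde G]=\alpha\,[\SL_2(\Z):\red_N^{-1}(G(N)\cap\SL_2(\Z/N\Z))]$. In fact $[\PSL_2(\Z):\tilde G]=[\SL_2(\Z):\pm\Gamma]=[\SL_2(\Z):\Gamma]/[\pm\Gamma:\Gamma]$, so $\alpha$ belongs on the other side. The true statement, reading the evident typo $I$ as $-I$, is $[\GL_2(\Zhat):G]=\alpha\,[\PSL_2(\Z):\tilde G]$. For example, let $G$ be the preimage of $\left\{\left(\begin{smallmatrix}1&*\\0&*\end{smallmatrix}\right)\right\}\le\GL_2(\Z/N\Z)$ with $N\ge 3$. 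Then $[\GL_2(\Zhat):G]=N^2\prod_{p\mid N}(1-p^{-2})$, while $[\PSL_2(\Z):\tilde G]$ is half of this.

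One point to tidy up: no reading of the printed statement makes it true. Your sentence saying the $\alpha=1$ reading ``requires'' only the case $-I\in G$ is muddled. Taking $\alpha=1$ literally asserts $[\PSL_2(\Z):\tilde G]=[\GL_2(\Zhat):G]$ for every $G$, and this fails whenever $-I\notin G$, as in the example above. The alternative conventions you float do not help either. With $\tilde G$ replaced by $\Gamma\le\SL_2(\Z)$ there is no factor at all, and passing to $\pm G$ just lands you in the case $-I\in G$. So rather than hedging about conventions, state and prove the corrected identity, which your argument already does.

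On your remark about gonality bounds, the paper's later numerics agree with your correction. It reports $i(K_{131})\approx 2.97$ for index $1200$ and $i(K_i)\approx 3.81$ for index $1536$, all with $-I\notin K_i$. Both values equal $\frac{325}{2^{17}}[\GL_2(\Zhat):K_i]$. That matches $\alpha$ sitting in the denominator, together with the factor $\frac12$ from Frey's criterion, and does not match the lemma as printed.
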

\begin{proof}
Let $N$ be the level of $G_E$ and denote for any group $G$ in this proof by $G(N)$ the reduction modulo $N$ of $G$. 
\begin{align*}
[\PSL_2(\Z):\tilde G]&=\alpha [\SL_2(\Z): \red_N ^{-1}(G(N)\cap \SL_2(\Z/N\Z))]=\alpha [\SL_2(\Z/N\Z): G(N)\cap \SL_2(\Z/N\Z)]\\
&=\alpha [\GL_2(\Z/N\Z): G(N)]=\alpha[\GL_2(\Zhat): G],
\end{align*}
where the second equality follows from the surjectivity of $\red_N$ (see also \cite[\S 3.5]{zywina_possible_indices}), the third follows from the surjectivity of the determinant, and the fourth equality follows from the definition of the level.
\end{proof}

For an open subgroup $G\leq \GL_2(\Zhat)$ define
$$i(G):= \frac{325\alpha }{2^{15}}[\GL_2(\Zhat): G],$$
where $\alpha$ as in Lemma \ref{lem:index}. We now prove the following generalization (and sharpening) of \cite[Lemma 7.2]{BELOV} (see also \cite[Theorem 7.3]{CGPS22}).

\begin{lemma} \label{lem:super-sporadic}
 Let $G\leq \GL_2(\Zhat)$ be an open subgroup with surjective determinant and $ x\in X_G(\overline \Q)$ satisfying $\deg_k x <i(G).$ Then $x$ is super-sporadic. 
\end{lemma}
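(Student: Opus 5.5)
We plan to combine Abramovich's linear lower bound for the gonality of modular curves with Frey's criterion relating gonality to infinitude of low-degree points. Abramovich's theorem, with the constant improved by Kim--Sarnak, gives for a finite-index subgroup $\Gamma\leq \PSL_2(\Z)$ the bound
\[\gon_\C X_\Gamma \geq \frac{325}{2^{15}}[\PSL_2(\Z):\Gamma],\]
since the constant is $\lambda_1/24$ with $\lambda_1\geq \frac14-(7/64)^2=\frac{975}{4096}$, and $\frac{975}{4096\cdot 24}=\frac{325}{2^{15}}$. Frey's result states that if a curve $X/\Q$ has infinitely many points of degree $d$, then $\gon_\Q X\leq 2d$. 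We read the hypothesis $\deg_k x$ as the degree of $x$ as a closed point over the base field $\Q$.

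The argument will proceed as follows. Fix an open $G'\leq G$ and a point $x'\in X_{G'}(\Qbar)$ lying over $x$. If $\det G'\neq\Zhat^\times$, we will pass to the geometrically connected situation or observe that the degree only grows; the main case is $\det G'=\Zhat^\times$. We have $\deg x'\geq \deg x$, but also $[\GL_2(\Zhat):G']=[G:G'][\GL_2(\Zhat):G]$ while $\deg x'\leq [G:G']\deg x$ (the fibre of $X_{G'}\to X_G$ has degree $[\pm G:\pm G']$). Combining these with Lemma \ref{lem:index} gives
\[\deg x' \leq [G:G']\deg x < [G:G']\, i(G) \leq i(G') \leq \frac{325}{2^{15}}[\PSL_2(\Z):\tilde G'] \leq \gon_\C X_{G'}\leq \gon_\Q X_{G'}.\]
For the step $[G:G']\,i(G)\leq i(G')$ we must compare the factors $\alpha$; since the factor $\alpha$ can only increase when passing to a subgroup ($-I\notin G$ forces $-I\notin G'$), this inequality holds, with the caveat that the degree of $X_{G'}\to X_G$ on coarse spaces equals $[\pm G:\pm G']$, which is at most $[G:G']$.

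Finally, to show that $x'$ is sporadic, we need $\deg x'<\delta(X_{G'})$. By Frey, $\delta(X_{G'})\geq \gon_\Q(X_{G'})/2$, so we in fact need $\deg x' < \gon/2$. This factor of $2$ is the main obstacle, and the constant $325/2^{15}$ must be checked against it. We will try to absorb the factor into the $\alpha$ normalization, since $\gon X_{\tilde G}$ uses the index in $\PSL_2$, or else use the sharper result (Abramovich–Harris / Frey) that infinitely many degree-$d$ points imply either a degree-$d$ map to $\PP^1$ or to a positive-rank elliptic curve, so that $\gon\leq 2d$ is needed only in the elliptic case. If the constants do not match, the fallback is to absorb the factor of $2$ by applying Kim--Sarnak's bound $\lambda_1\geq 975/4096$ to the full index together with the $\alpha$ factor, which is exactly how $i(G)$ was normalized. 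We then conclude that every $x'$ over $x$ is sporadic, hence $x$ is super-sporadic.
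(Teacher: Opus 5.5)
Your route is the paper's: Abramovich's bound with the Kim--Sarnak constant, Frey's Proposition~2, and the fact that both the degree and the threshold scale along $X_{G'}\to X_G$. But the factor of $2$ you flag is a genuine gap, and none of your three remedies closes it. Your chain only reaches $\deg x'<\gon_\Q X_{G'}$, whereas Frey gives sporadicity only below $\tfrac12\gon_\Q X_{G'}$.

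Here is why each remedy fails:
\begin{itemize}
\item The $\alpha$-normalization cannot help. Abramovich's bound sees only $[\PSL_2(\Z):\tilde G]=[\GL_2(\Zhat):\pm G]$, which is at most $[\GL_2(\Zhat):G]$; for $\Gamma_1(p)$ the two indices are $(p^2-1)/2$ and $p^2-1$. So the factor in Lemma~\ref{lem:index} should be $\tfrac12$, not $2$, when $-I\notin G$. Taken literally with the stated $i$, your step $i(G')\leq\frac{325}{2^{15}}[\PSL_2(\Z):\tilde G']$ is then off by a factor $4$.
\item For the same reason, ``applying Kim--Sarnak to the full index'' gains nothing.
\item The Abramovich--Harris-type dichotomy is not available for general $d$. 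Even where it holds, its elliptic alternative again only yields $\gon\leq 2d$, so the factor persists.
\end{itemize}

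The paper's proof contains the same slip. It states the criterion $\deg x<\tfrac12\gon_\C X_{\tilde G}$ (its display $\gon_\C X_G<i(G)$ is also reversed), and then concludes sporadicity directly from $\deg y<i(H)$. What the argument actually proves is the lemma with threshold
\[
\tfrac12\cdot\tfrac{325}{2^{15}}\,[\PSL_2(\Z):\tilde G]=\tfrac{325}{2^{16}}\,[\GL_2(\Zhat):\pm G].
\]
This is the Kim--Sarnak analogue of $\tfrac{7}{1600}=\tfrac12\cdot\tfrac{7}{800}$ in \cite[Lemma 7.2]{BELOV}. The paper's numerical values agree with this corrected quantity: $i(K_{131})\approx 2.97$ comes from $[\GL_2(\Zhat):\pm K_{131}]=600$, and $i(K_{101})\approx 3.81$ comes from $768$.

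So rather than trying to absorb the factor, prove the statement with this threshold. Your chain then closes verbatim:
\[
\deg x'\leq[\pm G:\pm G']\deg x<\tfrac{325}{2^{16}}[\GL_2(\Zhat):\pm G']\leq\tfrac12\gon_\C X_{G'}\leq\tfrac12\gon_\Q X_{G'}.
\]
If $\det G'\neq\Zhat^\times$, run the same argument on a geometric component over its field of definition $K$. The degree of $x'$ over $K$ is $\deg x'/[K:\Q]$, and the $\PSL_2$-index of a component is $[\GL_2(\Zhat):\pm G']/[K:\Q]$, where $[K:\Q]=[\Zhat^\times:\det G']$. Hence the inequality is unchanged.
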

\begin{proof}
    The proof follows the same argumentation as \cite[Lemma 7.2]{BELOV}. First recall Abramovich's bound \cite[Theorem 0.1]{abramovich} (see \cite[Theorem 4.1]{DerickxNajman25} for the constant $\frac{325}{2^{15}}$):
\begin{equation}\gon_\C X_{G}<i(G).\end{equation}
    Now by \cite[Proposition 2]{frey}, it follows that every $x\in X_G(\Qbar)$ that satisfies 
    $$\deg_k x< \frac{1}{2}\gon_\C X_{\tilde G}\leq \frac 1 2 \gon_\Q X_G$$
    is sporadic. 
    For an open subgroup $H\leq G$, the natural morphism $X_H\rightarrow X_G$ is of degree $[G:H]$ or $[G:H]/2$ (depending whether $- I \in H$ and $G$). On the other hand, we have $\frac{i(H)}{i(G)}=[G:H]$. So if $x\in X_G$ satisfies $\deg x<i(G)$, then every $y\in X_H$ mapping to $x$ satisfies $\deg y<i(H)$, so is sporadic.   
\end{proof}

Twist families will produce infinite families of modular curves with rational super-sporadic points. Let $\mathcal F(G,H)$ be one of the twist families from Table \ref{table:bigtableofgroups} such that $X_G(\Q)=\infty$ and $i(H)>1$. Then any rational point on $X_G(\Q)$ lifts to a super-sporadic point on $X_{H}^\psi(\Q)$ for some character $\psi$ by Lemma \ref{lem:super-sporadic}.

\begin{lemma}\label{lem:SS}
Let $G\leq \GL_2(\Zhat)$ be an open subgroup with surjective determinant, and let $\alpha$ be as defined in Lemma \ref{lem:index}. If  $[\GL_2(\Zhat): G]>\frac{100}{\alpha}$, any $x\in X_G(\Q)$ is super-sporadic.
\end{lemma}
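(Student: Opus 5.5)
This is a direct consequence of Lemma \ref{lem:super-sporadic} applied to a rational point, where $\deg x = 1$. So the plan is to show that the hypothesis $[\GL_2(\Zhat):G] > \frac{100}{\alpha}$ forces $i(G) > 1$, and then invoke that lemma.

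The numerical step is a short computation. By definition $i(G) = \frac{325\alpha}{2^{15}}[\GL_2(\Zhat):G]$. The hypothesis gives $\alpha[\GL_2(\Zhat):G] > 100$, so $i(G) > \frac{32500}{32768}$, which is slightly less than $1$. The bound $100$ is therefore not quite enough on its own, and integrality has to close the gap. The quantity $\alpha[\GL_2(\Zhat):G]$ is an integer; by Lemma \ref{lem:index} it equals $[\PSL_2(\Z):\tilde G]$. Hence the hypothesis gives $\alpha[\GL_2(\Zhat):G] \geq 101$, and then $i(G) \geq \frac{325\cdot 101}{2^{15}} = \frac{32825}{32768} > 1$. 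With $i(G) > 1 = \deg x$, Lemma \ref{lem:super-sporadic} applies and $x$ is super-sporadic.

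The only place needing care is this integrality argument, since the stated threshold $100$ is tuned so that $101$ is the first admissible value. I would also check that Lemma \ref{lem:super-sporadic} is really being invoked with the paper's $\alpha$ convention, namely $\alpha = 2$ when $-I \notin G$ (the statement of Lemma \ref{lem:index} writes ``$I \notin G$'', which should be read as $-I$). Since $\alpha$ is defined identically in both lemmas, this is consistent, and no further argument is needed beyond Lemma \ref{lem:super-sporadic}.
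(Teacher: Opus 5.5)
Your reduction is the same one-line argument the paper gives (apply Lemma~\ref{lem:super-sporadic} with $\deg x=1$). Your integrality step is genuinely needed, since the paper only asserts $i(G)\ge 1$ while that lemma requires $1<i(G)$. The real problem is that the statement is false as written, taking $\alpha=2$ exactly when $-I\notin G$ (your reading, and the intended one). So no argument can prove it, and yours breaks precisely at the point you declared consistent.

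Let $G$ be the preimage of $\{\left(\begin{smallmatrix}1&*\\0&*\end{smallmatrix}\right)\}\subset\GL_2(\Z/10\Z)$, so that $X_G\simeq X_1(10)$. This $G$ has surjective determinant, $-I\notin G$, and $[\GL_2(\Zhat):G]=2880/40=72>50=100/\alpha$. But $X_1(10)\simeq\PP^1_\Q$ has infinitely many rational points. Hence $\delta(X_G)=1$, and no rational point is sporadic even on $X_G$ itself (take $G\le G$ in the definition of super-sporadic). The same happens for $X_1(9)$ and $X_1(12)$, of index $72$ and $96$.

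The error sits in the two lemmas you invoke.

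First, Lemma~\ref{lem:index} is inverted. Passing from $\Gamma=\red_N^{-1}(G(N)\cap\SL_2(\Z/N\Z))$ to $\pm\Gamma$ halves the index when $-I\notin G$, so in fact $[\PSL_2(\Z):\tilde G]=[\GL_2(\Zhat):G]/\alpha$. For $X_1(10)$ this is $36$, not $144$. Your integrality claim survives, but the identity you quote does not.

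Second, Frey's criterion only makes points of degree $<\tfrac12\gon_\Q X_G$ sporadic. Abramovich's bound with the Kim--Sarnak eigenvalue estimate gives $\gon_\C X_{\tilde G}\ge\frac{325}{2^{15}}[\PSL_2(\Z):\tilde G]$, and the proof of Lemma~\ref{lem:super-sporadic} drops the factor $\tfrac12$. What that argument really supports is
\[
i'(G)=\frac{325}{2^{16}}[\PSL_2(\Z):\tilde G]=\frac{325}{2^{16}\alpha}[\GL_2(\Zhat):G].
\]
This is smaller than the stated $i(G)$ by a factor $2\alpha^2$. It is also what produces the paper's numerical values $i(K_{131})\approx 2.97$ and $i(K_{101})\approx 3.81$.

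With $i'$ in place of $i$, your proof goes through verbatim, integrality step included, and yields the correct statement. If $[\PSL_2(\Z):\tilde G]\ge 202$, i.e.\ $[\GL_2(\Zhat):G]\ge 202\alpha$, then every $x\in X_G(\Q)$ is super-sporadic, because $325\cdot 202/2^{16}=65650/65536>1$.
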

\begin{proof}
    We have
$$i(G)=\frac {325 \alpha}{2^{15}}[\GL_2(\Zhat): G]\geq 1 \quad \text{ when } \quad [\GL_2(\Zhat): G]>\frac{100}{\alpha}.$$
The result now follows from Lemma \ref{lem:super-sporadic}.
\end{proof}

\begin{remark}
We check that 95 out of 138 families $\mathcal F(M_i, K_i)$ from Table~\ref{table:bigtableofgroups} satisfy the assumptions of Lemma \ref{lem:SS}. This does not mean that the rational points on the $X_{K_i}$ for the remaining families are not super-sporadic -- it is usually much harder to prove that a point is not super-sporadic. 

However, the exception to this is when $g(X_{K_i})=0$, then the points will certainly not be sporadic (and hence not super-sporadic). There are 8 families (for $i\in \{1,3,10,21,27,38,76,96\}$) that satisfy this. In particular, the family $\mathcal F(\GL_2(\Zhat), \Ns(2))$ that contains the images of $100\%$ of elliptic curves over $\Q$ (by the argumentaion at begining of Section \ref{sec:nonserre}), will not produce super-sporadic points as $g(X_{\Ns(2)})=0$.
\end{remark}

As an example, consider the family $\mathcal F(M_{131}, K_{131}).$ We have $M_{131}=B_0(25)$, so $X_{B_0(25)}=X_0(25)$, and $g(X_0(25))=0$. On the other hand $i(K_{131})\approx 2.97.$ This shows that any $x\in X_0(25)(\Q)$ will lift to a super-sporadic rational point on $X_{K_{131}}^\psi$ (which is of genus 37) for some $\psi$. The family with the
largest value of $i(K_i)$ are the twist families $\mathcal F(M_i, K_i)$ for $i\in \{101,110,113,115,135,136\}$, with all of them having $ i(K_i)\approx 3.81,$ and $g(X_{K_i})=53.$

\section{Tables of groups}
In the table below we list the groups $M_i$ and $K_i$ that define our twist families with $\# X_{M_i}(\Q)=\infty$, as well as the invariants of the group $T_i=M_i/K_i$. The $M_i$ and $K_i$ are given by labels that are clickable links that lead to LMFDB pages of the corresponding modular curve $X_{M_i}$ or $X_{K_i}$. Much information can be read from these labels: the 1st number in the label is the level, the 2nd is the index, and the 3rd is the genus. The label contains a dash if and only if $-I$ is not in the group.

\begingroup
\small
\setlength{\tabcolsep}{3pt}
\begin{longtable}{lllc|lllc}
\caption{Groups} 
\label{table:bigtableofgroups}
\\
\toprule
$i$ & $M_i$ & $K_i$ & $M_i/K_i$ & $i$ & $M_i$ & $K_i$ & $M_i/K_i$ \\
\midrule
\endhead
\bottomrule
\endfoot
1 & \href{https://beta.lmfdb.org/ModularCurve/Q/1.1.0.a.1}{$X(1)$} & \href{https://beta.lmfdb.org/ModularCurve/Q/2.2.0.a.1}{$X_{\rm ns}(2)$} & $[ 2 ]$ & 2 & \href{https://beta.lmfdb.org/ModularCurve/Q/2.2.0.a.1}{$X_{\rm ns}(2)$} & \lmfdbmc{4.12.0-2.a.1.2} & $[ 6 ]$ \\
3 & \href{https://beta.lmfdb.org/ModularCurve/Q/2.3.0.a.1}{$X_0(2)$} & \lmfdbmc{4.12.0.d.1} & $[ 2, 2 ]$ & 4 & \href{https://beta.lmfdb.org/ModularCurve/Q/2.6.0.a.1}{$X(2)$} & \lmfdbmc{4.48.0-4.b.1.1} & $[ 2, 2, 2 ]$ \\
5 & \href{https://beta.lmfdb.org/ModularCurve/Q/3.3.0.a.1}{$X_{\rm ns}^+(3)$} & \lmfdbmc{6.12.1.b.1} & $[ 2, 2 ]$ & 6 & \href{https://beta.lmfdb.org/ModularCurve/Q/3.4.0.a.1}{$X_0(3)$} & \lmfdbmc{6.16.0-6.a.1.1} & $[ 2, 2 ]$ \\
7 & \href{https://beta.lmfdb.org/ModularCurve/Q/3.6.0.b.1}{$X_{\rm sp}^+(3)$} & \lmfdbmc{6.24.1.a.1} & $[ 2, 2 ]$ & 8 & \href{https://beta.lmfdb.org/ModularCurve/Q/3.12.0.a.1}{$X_{\rm sp}(3)$} & \lmfdbmc{18.144.3-18.b.1.1} & $[ 2, 6 ]$ \\
9 & \lmfdbmc{4.2.0.a.1} & \lmfdbmc{4.4.0-4.a.1.1} & $[ 2 ]$ & 10 & \href{https://beta.lmfdb.org/ModularCurve/Q/4.4.0.a.1}{$X_{\rm ns}^+(4)$} & \lmfdbmc{4.8.0.b.1} & $[ 2 ]$ \\
11 & \lmfdbmc{4.6.0.a.1} & \lmfdbmc{8.48.0-8.q.1.3} & $[ 2, 4 ]$ & 12 & \lmfdbmc{4.6.0.d.1} & \lmfdbmc{8.48.1-8.d.1.10} & $[ 2, 4 ]$ \\
13 & \lmfdbmc{4.6.0.b.1} & \lmfdbmc{8.48.0-4.b.1.9} & $[ 2, 4 ]$ & 14 & \lmfdbmc{4.6.0.e.1} & \lmfdbmc{8.48.1.o.1} & $[ 2, 2, 2 ]$ \\
15 & \href{https://beta.lmfdb.org/ModularCurve/Q/4.6.0.c.1}{$X_0(4)$} & \lmfdbmc{8.48.0-8.j.1.1} & $[ 2, 2, 2 ]$ & 16 & \lmfdbmc{4.8.0.b.1} & \lmfdbmc{4.16.0-4.b.1.1} & $[ 2 ]$ \\
17 & \href{https://beta.lmfdb.org/ModularCurve/Q/4.12.0.b.1}{$X_{\pm 1}(2,4)$} & \lmfdbmc{8.192.1-8.j.1.2} & $[ 2, 2, 2, 2 ]$ & 18 & \lmfdbmc{4.12.0.a.1} & \lmfdbmc{8.192.3-8.c.1.2} & $[ 2, 2, 4 ]$ \\
19 & \lmfdbmc{4.12.0.d.1} & \lmfdbmc{16.192.3-16.cl.1.6} & $[ 2, 2, 4 ]$ & 20 & \href{https://beta.lmfdb.org/ModularCurve/Q/4.24.0.b.1}{$X_{\rm sp}(4)$} & \lmfdbmc{16.768.13-16.bc.1.5} & $[ 2, 2, 2, 4 ]$ \\
21 & \href{https://beta.lmfdb.org/ModularCurve/Q/5.5.0.a.1}{$X_{S_4}(5)$} & \lmfdbmc{10.10.0.a.1} & $[ 2 ]$ & 22 & \href{https://beta.lmfdb.org/ModularCurve/Q/5.6.0.a.1}{$X_0(5)$} & \lmfdbmc{10.48.1-10.a.2.1} & $[ 2, 4 ]$ \\
23 & \href{https://beta.lmfdb.org/ModularCurve/Q/5.10.0.a.1}{$X_{\rm ns}^+(5)$} & \lmfdbmc{10.40.1.a.1} & $[ 2, 2 ]$ & 24 & \href{https://beta.lmfdb.org/ModularCurve/Q/5.15.0.a.1}{$X_{\rm sp}^+(5)$} & \lmfdbmc{10.60.3.b.1} & $[ 2, 2 ]$ \\
25 & \lmfdbmc{5.30.0.b.1} & \lmfdbmc{10.120.5.d.1} & $[ 2, 2 ]$ & 26 & \href{https://beta.lmfdb.org/ModularCurve/Q/5.30.0.a.1}{$X_{\rm sp}(5)$} & \lmfdbmc{50.1200.37-50.d.1.1} & $[ 2, 20 ]$ \\
27 & \lmfdbmc{6.6.0.b.1} & \lmfdbmc{6.6.0.b.1} & $[ ]$ & 28 & \lmfdbmc{6.8.0.a.1} & \lmfdbmc{12.96.2-12.a.1.6} & $[ 2, 6 ]$ \\
29 & \lmfdbmc{6.9.0.a.1} & \lmfdbmc{12.72.3.bf.1} & $[ 2, 2, 2 ]$ & 30 & \href{https://beta.lmfdb.org/ModularCurve/Q/6.12.0.a.1}{$X_0(6)$} & \lmfdbmc{12.96.1-12.h.1.7} & $[ 2, 2, 2 ]$ \\
31 & \lmfdbmc{6.18.0.b.1} & \lmfdbmc{12.144.5.n.1} & $[ 2, 2, 2 ]$ & 32 & \href{https://beta.lmfdb.org/ModularCurve/Q/6.24.0.a.1}{$X_{\pm 1}(2,6)$} & \lmfdbmc{12.384.5-12.b.1.4} & $[ 2, 2, 2, 2 ]$ \\
33 & \lmfdbmc{6.24.0.c.1} & \lmfdbmc{6.48.0-6.c.1.1} & $[ 2 ]$ & 34 & \href{https://beta.lmfdb.org/ModularCurve/Q/6.36.0.a.1}{$X_{\rm arith, \pm 1}(3,6)$} & \lmfdbmc{36.864.21-36.bo.1.2} & $[ 2, 2, 6 ]$ \\
35 & \href{https://beta.lmfdb.org/ModularCurve/Q/7.8.0.a.1}{$X_0(7)$} & \lmfdbmc{14.96.2-14.c.2.2} & $[ 2, 6 ]$ & 36 & \href{https://beta.lmfdb.org/ModularCurve/Q/7.21.0.a.1}{$X_{\rm ns}^+(7)$} & \lmfdbmc{14.84.5.b.1} & $[ 2, 2 ]$ \\
37 & \href{https://beta.lmfdb.org/ModularCurve/Q/7.28.0.a.1}{$X_{\rm sp}^+(7)$} & \lmfdbmc{14.112.5.b.1} & $[ 2, 2 ]$ & 38 & \lmfdbmc{8.8.0.a.1} & \lmfdbmc{16.16.0.b.1} & $[ 2 ]$ \\
39 & \lmfdbmc{8.12.0.v.1} & \lmfdbmc{16.96.3.r.2} & $[ 2, 2, 2 ]$ & 40 & \lmfdbmc{8.12.0.z.1} & \lmfdbmc{16.96.5.i.1} & $[ 2, 2, 2 ]$ \\
41 & \lmfdbmc{8.12.0.s.1} & \lmfdbmc{16.96.3.k.1} & $[ 2, 2, 2 ]$ & 42 & \lmfdbmc{8.12.0.t.1} & \lmfdbmc{8.96.1-8.w.1.1} & $[ 2, 4 ]$ \\
43 & \href{https://beta.lmfdb.org/ModularCurve/Q/8.12.0.n.1}{$X_0(8)$} & \lmfdbmc{16.192.1-16.a.2.2} & $[ 2, 2, 2, 2 ]$ & 44 & \lmfdbmc{8.12.0.o.1} & \lmfdbmc{16.192.3-16.w.2.4} & $[ 2, 2, 4 ]$ \\
45 & \lmfdbmc{8.12.0.h.1} & \lmfdbmc{32.96.0-32.c.1.1} & $[ 2, 4 ]$ & 46 & \lmfdbmc{8.12.0.q.1} & \lmfdbmc{16.96.1-8.g.2.3} & $[ 2, 4 ]$ \\
47 & \lmfdbmc{8.12.0.g.1} & \lmfdbmc{32.96.0-32.a.1.2} & $[ 2, 4 ]$ & 48 & \lmfdbmc{8.12.0.w.1} & \lmfdbmc{16.192.5-16.h.1.2} & $[ 2, 2, 4 ]$ \\
49 & \href{https://beta.lmfdb.org/ModularCurve/Q/8.16.0.a.1}{$X_{\rm ns}^+(8)$} & \lmfdbmc{8.32.1.c.1} & $[ 2 ]$ & 50 & \lmfdbmc{8.24.0.q.1} & \lmfdbmc{32.768.13-32.by.2.1} & $[ 2, 2, 2, 4 ]$ \\
51 & \lmfdbmc{8.24.0.bi.1} & \lmfdbmc{16.192.9.x.1} & $[ 2, 2, 2 ]$ & 52 & \lmfdbmc{8.24.0.t.1} & \lmfdbmc{32.768.21-32.jh.1.1} & $[ 2, 2, 8 ]$ \\
53 & \lmfdbmc{8.24.0.bj.1} & \lmfdbmc{16.192.5-16.bt.2.6} & $[ 2, 4 ]$ & 54 & \lmfdbmc{8.24.0.bp.1} & \lmfdbmc{16.192.3-16.bn.1.1} & $[ 2, 4 ]$ \\
55 & \lmfdbmc{8.24.0.f.1} & \lmfdbmc{16.768.21-16.ba.1.1} & $[ 2, 2, 8 ]$ & 56 & \lmfdbmc{8.24.0.bq.1} & \lmfdbmc{16.192.9.w.1} & $[ 2, 2, 2 ]$ \\
57 & \lmfdbmc{8.24.0.i.1} & \lmfdbmc{16.768.13-16.be.2.1} & $[ 2, 2, 2, 4 ]$ & 58 & \lmfdbmc{8.24.0.bf.1} & \lmfdbmc{8.96.1.e.1} & $[ 2, 2 ]$ \\
59 & \lmfdbmc{8.24.0.be.1} & \lmfdbmc{8.96.1-8.bb.1.1} & $[ 4 ]$ & 60 & \lmfdbmc{8.24.0.bt.1} & \lmfdbmc{16.192.9.bg.1} & $[ 2, 2, 2 ]$ \\
61 & \lmfdbmc{8.24.0.bn.1} & \lmfdbmc{8.96.1-8.h.2.1} & $[ 4 ]$ & 62 & \lmfdbmc{8.24.0.bh.1} & \lmfdbmc{16.192.5-16.i.2.6} & $[ 2, 4 ]$ \\
63 & \lmfdbmc{8.24.0.bc.1} & \lmfdbmc{16.192.3-16.cz.1.2} & $[ 2, 4 ]$ & 64 & \lmfdbmc{8.24.0.bo.1} & \lmfdbmc{16.192.9.fp.1} & $[ 2, 2, 2 ]$ \\
65 & \lmfdbmc{8.24.0.bs.1} & \lmfdbmc{16.192.9.b.1} & $[ 2, 2, 2 ]$ & 66 & \lmfdbmc{9.9.0.a.1} & \lmfdbmc{18.36.3.e.1} & $[ 2, 2 ]$ \\
67 & \href{https://beta.lmfdb.org/ModularCurve/Q/9.12.0.a.1}{$X_0(9)$} & \lmfdbmc{18.144.3-18.g.1.2} & $[ 2, 6 ]$ & 68 & \lmfdbmc{9.12.0.b.1} & \lmfdbmc{18.144.2-18.b.2.2} & $[ 2, 6 ]$ \\
69 & \lmfdbmc{9.18.0.a.1} & \lmfdbmc{18.72.3.e.1} & $[ 2, 2 ]$ & 70 & \lmfdbmc{9.18.0.d.1} & \lmfdbmc{18.72.5.a.1} & $[ 2, 2 ]$ \\
71 & \lmfdbmc{9.27.0.a.1} & \lmfdbmc{18.54.2.a.1} & $[ 2 ]$ & 72 & \href{https://beta.lmfdb.org/ModularCurve/Q/9.27.0.b.1}{$X_{\rm ns}^+(9)$} & \lmfdbmc{18.108.7.h.1} & $[ 2, 2 ]$ \\
73 & \lmfdbmc{9.36.0.b.1} & \lmfdbmc{54.1296.43-54.p.1.1} & $[ 2, 18 ]$ & 74 & \lmfdbmc{10.10.0.a.1} & \lmfdbmc{20.60.2-10.a.1.3} & $[ 6 ]$ \\
75 & \href{https://beta.lmfdb.org/ModularCurve/Q/10.18.0.a.1}{$X_0(10)$} & \lmfdbmc{20.288.5-20.a.1.4} & $[ 2, 2, 4 ]$ & 76 & \lmfdbmc{10.30.0.a.1} & \lmfdbmc{10.30.0.a.1} & $[ ]$ \\
77 & \href{https://beta.lmfdb.org/ModularCurve/Q/11.55.1.b.1}{$X_{\rm ns}^+(11)$} & \lmfdbmc{22.220.13.b.1} & $[ 2, 2 ]$ & 78 & \lmfdbmc{12.8.0.a.1} & \lmfdbmc{12.32.0-12.b.1.4} & $[ 2, 2 ]$ \\
79 & \lmfdbmc{12.12.0.q.1} & \lmfdbmc{12.48.3.i.1} & $[ 2, 2 ]$ & 80 & \lmfdbmc{12.18.0.l.1} & \lmfdbmc{24.288.17.dcb.1} & $[ 2, 2, 2, 2 ]$ \\
81 & \lmfdbmc{12.18.0.k.1} & \lmfdbmc{24.288.9-24.cg.1.1} & $[ 2, 2, 4 ]$ & 82 & \lmfdbmc{12.24.0.d.1} & \lmfdbmc{24.384.9-24.ib.1.14} & $[ 2, 2, 4 ]$ \\
83 & \href{https://beta.lmfdb.org/ModularCurve/Q/12.24.0.g.1}{$X_0(12)$} & \lmfdbmc{24.384.5-24.es.4.8} & $[ 2, 2, 2, 2 ]$ & 84 & \lmfdbmc{12.24.0.f.1} & \lmfdbmc{24.384.9-24.bl.1.21} & $[ 2, 2, 4 ]$ \\
85 & \lmfdbmc{12.36.0.h.1} & \lmfdbmc{12.144.7.bh.1} & $[ 2, 2 ]$ & 86 & \lmfdbmc{12.36.0.j.1} & \lmfdbmc{12.144.3-12.bx.1.4} & $[ 4 ]$ \\
87 & \lmfdbmc{12.36.0.k.1} & \lmfdbmc{24.144.7.lj.1} & $[ 2, 2 ]$ & 88 & \lmfdbmc{12.36.0.e.1} & \lmfdbmc{12.144.3-12.o.1.4} & $[ 4 ]$ \\
89 & \href{https://beta.lmfdb.org/ModularCurve/Q/13.14.0.a.1}{$X_0(13)$} & \lmfdbmc{26.336.9-26.d.1.1} & $[ 2, 12 ]$ & 90 & \lmfdbmc{14.16.0.a.1} & \lmfdbmc{28.576.16-28.c.1.6} & $[ 6, 6 ]$ \\
91 & \lmfdbmc{15.15.1.a.1} & \lmfdbmc{30.60.5.b.1} & $[ 2, 2 ]$ & 92 & \lmfdbmc{15.18.0.a.1} & \lmfdbmc{30.288.9-30.e.2.1} & $[ 2, 2, 4 ]$ \\
93 & \lmfdbmc{15.30.1.a.1} & \lmfdbmc{30.240.17.e.1} & $[ 2, 2, 2 ]$ & 94 & \lmfdbmc{15.45.1.a.1} & \lmfdbmc{30.360.25.y.1} & $[ 2, 2, 2 ]$ \\
95 & \lmfdbmc{16.16.0.b.1} & \lmfdbmc{32.32.0-16.b.1.1} & $[ 2 ]$ & 96 & \lmfdbmc{16.16.0.a.1} & \lmfdbmc{32.32.0.a.1} & $[ 2 ]$ \\
97 & \lmfdbmc{16.24.1.l.1} & \lmfdbmc{16.384.9-16.n.1.1} & $[ 4, 4 ]$ & 98 & \lmfdbmc{16.24.0.j.1} & \lmfdbmc{32.768.21-32.do.2.2} & $[ 2, 2, 8 ]$ \\
99 & \href{https://beta.lmfdb.org/ModularCurve/Q/16.24.0.g.1}{$X_0(16)$} & \lmfdbmc{32.768.13-32.n.2.2} & $[ 2, 2, 2, 4 ]$ & 100 & \lmfdbmc{16.48.0.h.1} & \lmfdbmc{32.192.11.w.1} & $[ 4 ]$ \\
101 & \lmfdbmc{16.48.0.c.1} & \lmfdbmc{32.1536.45-32.cr.1.6} & $[ 2, 16 ]$ & 102 & \lmfdbmc{16.48.0.t.2} & \lmfdbmc{32.192.11.bj.1} & $[ 4 ]$ \\
103 & \lmfdbmc{16.48.0.t.1} & \lmfdbmc{32.192.3-16.cj.1.3} & $[ 4 ]$ & 104 & \lmfdbmc{16.48.1.ch.1} & \lmfdbmc{32.384.21.ce.2} & $[ 2, 2, 2 ]$ \\
105 & \lmfdbmc{16.48.0.n.1} & \lmfdbmc{32.192.7.ch.1} & $[ 2, 2 ]$ & 106 & \lmfdbmc{16.48.1.cs.1} & \lmfdbmc{32.384.25.v.1} & $[ 2, 2, 2 ]$ \\
107 & \lmfdbmc{16.48.1.bv.1} & \lmfdbmc{32.384.21.r.1} & $[ 2, 2, 2 ]$ & 108 & \lmfdbmc{16.48.1.df.1} & \lmfdbmc{32.384.25.bg.1} & $[ 2, 2, 2 ]$ \\
109 & \lmfdbmc{16.48.1.bq.1} & \lmfdbmc{32.384.21.ke.1} & $[ 2, 2, 2 ]$ & 110 & \lmfdbmc{16.48.0.m.2} & \lmfdbmc{64.1536.53-64.fl.1.1} & $[ 2, 16 ]$ \\
111 & \lmfdbmc{16.48.1.dc.1} & \lmfdbmc{32.384.25.u.1} & $[ 2, 2, 2 ]$ & 112 & \lmfdbmc{16.48.1.de.1} & \lmfdbmc{32.384.25.b.1} & $[ 2, 2, 2 ]$ \\
113 & \lmfdbmc{16.48.0.m.1} & \lmfdbmc{64.1536.45-64.dc.1.1} & $[ 2, 16 ]$ & 114 & \lmfdbmc{16.48.0.h.2} & \lmfdbmc{32.192.3-16.bv.1.5} & $[ 4 ]$ \\
115 & \lmfdbmc{16.48.0.c.2} & \lmfdbmc{32.1536.53-32.h.1.8} & $[ 2, 16 ]$ & 116 & \lmfdbmc{16.48.0.i.1} & \lmfdbmc{32.192.7.w.1} & $[ 2, 2 ]$ \\
117 & \lmfdbmc{16.48.1.bl.1} & \lmfdbmc{32.384.21.bw.1} & $[ 2, 2, 2 ]$ & 118 & \lmfdbmc{16.48.1.cg.1} & \lmfdbmc{32.384.21.b.2} & $[ 2, 2, 2 ]$ \\
119 & \lmfdbmc{16.48.1.cc.1} & \lmfdbmc{32.384.21.bj.1} & $[ 2, 2, 2 ]$ & 120 & \lmfdbmc{18.24.0.c.1} & \lmfdbmc{18.48.0-18.c.1.1} & $[ 2 ]$ \\
121 & \lmfdbmc{18.24.0.b.1} & \lmfdbmc{36.864.28-36.e.2.7} & $[ 6, 6 ]$ & 122 & \href{https://beta.lmfdb.org/ModularCurve/Q/18.36.0.a.1}{$X_0(18)$} & \lmfdbmc{36.864.21-36.u.1.2} & $[ 2, 2, 6 ]$ \\
123 & \lmfdbmc{20.10.0.a.1} & \lmfdbmc{20.20.0-20.a.1.2} & $[ 2 ]$ & 124 & \lmfdbmc{20.20.1.c.1} & \lmfdbmc{20.40.2.b.1} & $[ 2 ]$ \\
125 & \lmfdbmc{21.63.1.a.1} & \lmfdbmc{42.504.37.o.1} & $[ 2, 2, 2 ]$ & 126 & \lmfdbmc{24.36.0.cj.1} & \lmfdbmc{48.576.41.bde.2} & $[ 2, 2, 2, 2 ]$ \\
127 & \lmfdbmc{24.36.0.cg.1} & \lmfdbmc{48.1152.41-48.bbp.1.1} & $[ 2, 2, 2, 4 ]$ & 128 & \lmfdbmc{24.36.1.gl.1} & \lmfdbmc{48.576.37.yl.1} & $[ 2, 2, 2, 2 ]$ \\
129 & \lmfdbmc{24.36.1.fw.1} & \lmfdbmc{48.576.37.ly.1} & $[ 2, 2, 2, 2 ]$ & 130 & \lmfdbmc{24.48.1.mk.1} & \lmfdbmc{24.192.13.y.1} & $[ 2, 2 ]$ \\
131 & \href{https://beta.lmfdb.org/ModularCurve/Q/25.30.0.a.1}{$X_0(25)$} & \lmfdbmc{50.1200.37-50.h.1.2} & $[ 2, 20 ]$ & 132 & \lmfdbmc{27.36.0.a.1} & \lmfdbmc{54.1296.43-54.bn.2.2} & $[ 2, 18 ]$ \\
133 & \lmfdbmc{28.16.0.a.1} & \lmfdbmc{28.192.6-28.j.2.1} & $[ 2, 6 ]$ & 134 & \lmfdbmc{30.30.1.a.1} & \lmfdbmc{30.30.1.a.1} & $[ ]$ \\
135 & \lmfdbmc{32.48.0.f.2} & \lmfdbmc{64.1536.53-64.ba.3.2} & $[ 2, 16 ]$ & 136 & \lmfdbmc{32.48.0.f.1} & \lmfdbmc{32.1536.45-32.d.1.4} & $[ 2, 16 ]$ \\
137 & \lmfdbmc{36.24.0.b.1} & \lmfdbmc{36.288.6-36.b.1.3} & $[ 2, 6 ]$ & 138 & \lmfdbmc{36.36.1.e.1} & \lmfdbmc{36.144.11.o.1} & $[ 2, 2 ]$ \\
\bottomrule
\end{longtable}
\endgroup

\bibliographystyle{alpha}
\bibliography{bibliography1}

@article {BalakrishnanMazur,
    AUTHOR = {Balakrishnan, Jennifer S. and Mazur, Barry},
     TITLE = {Ogg's torsion conjecture: fifty years later},
      NOTE = {With an appendix by Netan Dogra},
   JOURNAL = {Bull. Amer. Math. Soc. (N.S.)},
  FJOURNAL = {American Mathematical Society. Bulletin. New Series},
    VOLUME = {62},
      YEAR = {2025},
    NUMBER = {2},
     PAGES = {235--268},
      ISSN = {0273-0979,1088-9485},
   MRCLASS = {11G18 (14G05 14G35)},
  MRNUMBER = {4885858},
       DOI = {10.1090/bull/1851},
       URL = {https://doi.org/10.1090/bull/1851},
}

@article {Jones,
    AUTHOR = {Jones, Nathan},
     TITLE = {Almost all elliptic curves are {S}erre curves},
   JOURNAL = {Trans. Amer. Math. Soc.},
  FJOURNAL = {Transactions of the American Mathematical Society},
    VOLUME = {362},
      YEAR = {2010},
    NUMBER = {3},
     PAGES = {1547--1570},
      ISSN = {0002-9947,1088-6850},
   MRCLASS = {11G05 (11F80 11R45)},
  MRNUMBER = {2563740},
MRREVIEWER = {Ravi\ K.\ Ramakrishna},
       DOI = {10.1090/S0002-9947-09-04804-1},
       URL = {https://doi.org/10.1090/S0002-9947-09-04804-1},
}

@article {GonzalezCubic,
    AUTHOR = {Gonz\'alez, Josep},
     TITLE = {On cubic factors of {$j$}-invariants of quadratic {$\Bbb
              Q$}-curves of prime degree},
   JOURNAL = {J. Number Theory},
  FJOURNAL = {Journal of Number Theory},
    VOLUME = {128},
      YEAR = {2008},
    NUMBER = {2},
     PAGES = {377--389},
      ISSN = {0022-314X,1096-1658},
   MRCLASS = {11G18 (11G05)},
  MRNUMBER = {2380326},
MRREVIEWER = {Mihran\ Papikian},
       DOI = {10.1016/j.jnt.2007.05.011},
       URL = {https://doi.org/10.1016/j.jnt.2007.05.011},
}

@article {GalbaraithExpMath,
    AUTHOR = {Galbraith, Steven D.},
     TITLE = {Rational points on {$X_0^+(p)$}},
   JOURNAL = {Experiment. Math.},
  FJOURNAL = {Experimental Mathematics},
    VOLUME = {8},
      YEAR = {1999},
    NUMBER = {4},
     PAGES = {311--318},
      ISSN = {1058-6458,1944-950X},
   MRCLASS = {11G18 (11F11)},
  MRNUMBER = {1737228},
MRREVIEWER = {Conjeeveram\ S.\ Rajan},
       URL = {http://projecteuclid.org/euclid.em/1047262354},
}

@article {OggDiophantine,
    AUTHOR = {Ogg, A. P.},
     TITLE = {Diophantine equations and modular forms},
   JOURNAL = {Bull. Amer. Math. Soc.},
  FJOURNAL = {Bulletin of the American Mathematical Society},
    VOLUME = {81},
      YEAR = {1975},
     PAGES = {14--27},
      ISSN = {0002-9904},
   MRCLASS = {14G05 (10D05)},
  MRNUMBER = {354675},
MRREVIEWER = {Kuang-yen\ Shih},
       DOI = {10.1090/S0002-9904-1975-13623-8},
       URL = {https://doi.org/10.1090/S0002-9904-1975-13623-8},
}

@incollection {Mazur77RatPoints,
    AUTHOR = {Mazur, B.},
     TITLE = {Rational points on modular curves},
 BOOKTITLE = {Modular functions of one variable, {V} ({P}roc. {S}econd
              {I}nternat. {C}onf., {U}niv. {B}onn, {B}onn, 1976)},
    SERIES = {Lecture Notes in Math.},
    VOLUME = {Vol. 601},
     PAGES = {107--148},
 PUBLISHER = {Springer, Berlin-New York},
      YEAR = {1977},
   MRCLASS = {14G25 (10D15 14H25 14K15)},
  MRNUMBER = {450283},
MRREVIEWER = {M.\ A.\ Kenku},
}

@article {Baran,
    AUTHOR = {Baran, Burcu},
     TITLE = {Normalizers of non-split {C}artan subgroups, modular curves,
              and the class number one problem},
   JOURNAL = {J. Number Theory},
  FJOURNAL = {Journal of Number Theory},
    VOLUME = {130},
      YEAR = {2010},
    NUMBER = {12},
     PAGES = {2753--2772},
      ISSN = {0022-314X,1096-1658},
   MRCLASS = {11G05 (11G18 11R29)},
  MRNUMBER = {2684496},
MRREVIEWER = {Siman\ Wong},
       DOI = {10.1016/j.jnt.2010.06.005},
       URL = {https://doi.org/10.1016/j.jnt.2010.06.005},
}

@unpublished{MayleRouse,
      title={Rational points on modular curves via maps to elliptic curves with rank zero}, 
      author={Jacob Mayle and Jeremy Rouse},
note = {preprint, available at \url{https://arxiv.org/abs/2601.17202}}
}

@unpublished {vanHoeij,
    AUTHOR = {Mark van Hoeij},
     TITLE = {Low degree places on the modular curve ${X}_1(N)$},
   NOTE = {preprint, available at \url{https://arxiv.org/abs/1202.4355}}
}

@article {mazur77,
    AUTHOR = {Mazur, B.},
     TITLE = {Modular curves and the {E}isenstein ideal},
   JOURNAL = {Inst. Hautes \'Etudes Sci. Publ. Math.},
  FJOURNAL = {Institut des Hautes \'Etudes Scientifiques. Publications
              Math\'ematiques},
    NUMBER = {47},
      YEAR = {1977},
     PAGES = {33--186 (1978)},
      ISSN = {0073-8301},
     CODEN = {PMIHA6},
MRREVIEWER = {M. Ohta},
       URL = {http://www.numdam.org/item?id=PMIHES_1977__47__33_0},
}

@article {mazur78,
    AUTHOR = {Mazur, B.},
     TITLE = {Rational isogenies of prime degree (with an appendix by {D}.
              {G}oldfeld)},
   JOURNAL = {Invent. Math.},
  FJOURNAL = {Inventiones Mathematicae},
    VOLUME = {44},
      YEAR = {1978},
    NUMBER = {2},
     PAGES = {129--162},
      ISSN = {0020-9910},
     CODEN = {INVMBH},
   MRCLASS = {14K07 (10D35 14G25)},
MRREVIEWER = {V. V. Shokurov},
       DOI = {10.1007/BF01390348},
       URL = {http://dx.doi.org/10.1007/BF01390348},
}

@article {KM88,
    AUTHOR = {Kenku, M. A. and Momose, F.},
     TITLE = {Torsion points on elliptic curves defined over quadratic
              fields},
   JOURNAL = {Nagoya Math. J.},
  FJOURNAL = {Nagoya Mathematical Journal},
    VOLUME = {109},
      YEAR = {1988},
     PAGES = {125--149},
      ISSN = {0027-7630},
     CODEN = {NGMJA2},
MRREVIEWER = {Bert van Geemen},
       URL = {http://projecteuclid.org/euclid.nmj/1118780896},
}

@article {serre72,
    AUTHOR = {Serre, Jean-Pierre},
     TITLE = {Propri\'et\'es galoisiennes des points d'ordre fini des
              courbes elliptiques},
   JOURNAL = {Invent. Math.},
  FJOURNAL = {Inventiones Mathematicae},
    VOLUME = {15},
      YEAR = {1972},
    NUMBER = {4},
     PAGES = {259--331},
      ISSN = {0020-9910},
   MRCLASS = {14G25 (14K15)},
MRREVIEWER = {J. W. S. Cassels},
}

@preamble{
   "\def\cprime{$'$} "
}

@article {greicius10,
    AUTHOR = {Greicius, Aaron},
     TITLE = {Elliptic curves with surjective adelic {G}alois
              representations},
   JOURNAL = {Experiment. Math.},
  FJOURNAL = {Experimental Mathematics},
    VOLUME = {19},
      YEAR = {2010},
    NUMBER = {4},
     PAGES = {495--507},
      ISSN = {1058-6458},
   MRCLASS = {11G05 (11F80)},
MRREVIEWER = {N{\'u}ria Vila},
       DOI = {10.1080/10586458.2010.10390639},
       URL = {http://dx.doi.org/10.1080/10586458.2010.10390639},
}

@article{zywina_open_image_computations,
 author = {Zywina, David},
 title = {Open image computations for elliptic curves over number fields},
 fjournal = {Research in Number Theory},
 journal = {Res. Number Theory},
 issn = {2522-0160},
 volume = {11},
 number = {1},
 pages = {24},
 note = {Id/No 1},
 year = {2025},
 language = {English},
 doi = {10.1007/s40993-024-00599-2},
 zbMATH = {7958639}
}

@unpublished {zywina_possible_images,
    AUTHOR = {Zywina, David},
     TITLE = {On the possible images of the mod $\ell$ representations associated to elliptic curves over {$\mathbb{Q}$}},
   NOTE = {preprint, \url{https://arxiv.org/abs/1508.07660}
}
}

@unpublished{zywina_explicit_images,
 author = {Zywina, David},
 title = {Explicit open images for elliptic curves over {$\mathbb Q$}},
 note = {preprint, \url{https://arxiv.org/abs/2206.14959}}
}

@unpublished{zywina_possible_indices,
 author = {Zywina, David},
 title = {Possible indices for the {Galois} image of elliptic curves over {$\mathbb Q$}},
 note = {preprint, \url{https://arxiv.org/abs/1508.07663}}
}

@article {BELOV,
    AUTHOR = {Bourdon, Abbey and Ejder, \"{O}zlem and Liu, Yuan and Odumodu,
              Frances and Viray, Bianca},
     TITLE = {On the level of modular curves that give rise to isolated
              {$j$}-invariants},
   JOURNAL = {Adv. Math.},
  FJOURNAL = {Advances in Mathematics},
    VOLUME = {357},
      YEAR = {2019},
     PAGES = {106824, 33},
      ISSN = {0001-8708},
   MRCLASS = {14G35 (11G05)},
  MRNUMBER = {4016915},
       DOI = {10.1016/j.aim.2019.106824},
       URL = {https://doi.org/10.1016/j.aim.2019.106824},
}

@article {najman16,
    AUTHOR = {Najman, F.},
     TITLE = {Torsion of rational elliptic curves over cubic fields and
              sporadic points on {$X_1(n)$}},
   JOURNAL = {Math. Res. Lett.},
  FJOURNAL = {Mathematical Research Letters},
    VOLUME = {23},
      YEAR = {2016},
    NUMBER = {1},
     PAGES = {245--272},
      ISSN = {1073-2780},
   MRCLASS = {14H52 (11G05 11G18 14G25)},
MRREVIEWER = {John L. Boxall},
       DOI = {10.4310/MRL.2016.v23.n1.a12},
       URL = {http://dx.doi.org/10.4310/MRL.2016.v23.n1.a12},
}

@article {sutherland,
    AUTHOR = {Sutherland, Andrew V.},
     TITLE = {Computing images of {G}alois representations attached to
              elliptic curves},
   JOURNAL = {Forum Math. Sigma},
  FJOURNAL = {Forum of Mathematics. Sigma},
    VOLUME = {4},
      YEAR = {2016},
     PAGES = {e4, 79},
      ISSN = {2050-5094},
   MRCLASS = {11G05 (11F80 11G20 11Y16)},
MRREVIEWER = {Susan L. Schmoyer},
       DOI = {10.1017/fms.2015.33},
       URL = {https://doi.org/10.1017/fms.2015.33},
}

@article {RouseDZB,
    AUTHOR = {Rouse, Jeremy and Zureick-Brown, David},
     TITLE = {Elliptic curves over {$\Bbb Q$} and 2-adic images of {G}alois},
   JOURNAL = {Res. Number Theory},
  FJOURNAL = {Research in Number Theory},
    VOLUME = {1},
      YEAR = {2015},
     PAGES = {Art. 12, 34},
      ISSN = {2363-9555},
   MRCLASS = {11G05 (11F80)},
MRREVIEWER = {\'{A}lvaro Lozano-Robledo},
       DOI = {10.1007/s40993-015-0013-7},
       URL = {https://doi.org/10.1007/s40993-015-0013-7},
}

@article {frey,
    AUTHOR = {Frey, Gerhard},
     TITLE = {Curves with infinitely many points of fixed degree},
   JOURNAL = {Israel J. Math.},
  FJOURNAL = {Israel Journal of Mathematics},
    VOLUME = {85},
      YEAR = {1994},
    NUMBER = {1-3},
     PAGES = {79--83},
      ISSN = {0021-2172},
   MRCLASS = {11G30 (11G05 14G25 14H25)},
MRREVIEWER = {Takeshi Ooe},
       DOI = {10.1007/BF02758637},
       URL = {https://doi.org/10.1007/BF02758637},
}

@article {abramovich,
    AUTHOR = {Abramovich, Dan},
     TITLE = {A linear lower bound on the gonality of modular curves},
   JOURNAL = {Internat. Math. Res. Notices},
  FJOURNAL = {International Mathematics Research Notices},
      YEAR = {1996},
    NUMBER = {20},
     PAGES = {1005--1011},
      ISSN = {1073-7928},
   MRCLASS = {11G18 (11F32 14G35)},
MRREVIEWER = {M. Ram Murty},
       DOI = {10.1155/S1073792896000621},
       URL = {https://doi.org/10.1155/S1073792896000621},
}

@article {Balakrishnan,
    AUTHOR = {Balakrishnan, Jennifer and Dogra, Netan and M\"{u}ller, J. Steffen
              and Tuitman, Jan and Vonk, Jan},
     TITLE = {Explicit {C}habauty-{K}im for the split {C}artan modular curve
              of level 13},
   JOURNAL = {Ann. of Math. (2)},
  FJOURNAL = {Annals of Mathematics. Second Series},
    VOLUME = {189},
      YEAR = {2019},
    NUMBER = {3},
     PAGES = {885--944},
      ISSN = {0003-486X},
   MRCLASS = {14G05 (11G18 11G50 11Y50)},
  MRNUMBER = {3961086},
       DOI = {10.4007/annals.2019.189.3.6},
       URL = {https://doi.org/10.4007/annals.2019.189.3.6},
}

@article {Ogg74,
    AUTHOR = {Ogg, Andrew P.},
     TITLE = {Hyperelliptic modular curves},
   JOURNAL = {Bull. Soc. Math. France},
  FJOURNAL = {Bulletin de la Soci\'{e}t\'{e} Math\'{e}matique de France},
    VOLUME = {102},
      YEAR = {1974},
     PAGES = {449--462},
      ISSN = {0037-9484},
   MRCLASS = {14G05 (10D05 14H45)},
  MRNUMBER = {364259},
MRREVIEWER = {Kuang-yen Shih},
       URL = {http://www.numdam.org/item?id=BSMF_1974__102__449_0},
}

@article {GrossZagier85,
    AUTHOR = {Gross, Benedict H. and Zagier, Don B.},
     TITLE = {On singular moduli},
   JOURNAL = {J. Reine Angew. Math.},
  FJOURNAL = {Journal f\"{u}r die Reine und Angewandte Mathematik. [Crelle's
              Journal]},
    VOLUME = {355},
      YEAR = {1985},
     PAGES = {191--220},
      ISSN = {0075-4102},
   MRCLASS = {11F03 (11G05 14K15)},
  MRNUMBER = {772491},
MRREVIEWER = {R. W. K. Odoni},
}

@article {BP11,
    AUTHOR = {Bilu, Yuri and Parent, Pierre},
     TITLE = {Serre's uniformity problem in the split {C}artan case},
   JOURNAL = {Ann. of Math. (2)},
  FJOURNAL = {Annals of Mathematics. Second Series},
    VOLUME = {173},
      YEAR = {2011},
    NUMBER = {1},
     PAGES = {569--584},
      ISSN = {0003-486X},
   MRCLASS = {11G15 (11G05)},
  MRNUMBER = {2753610},
MRREVIEWER = {Damian R\"{o}ssler},
       DOI = {10.4007/annals.2011.173.1.13},
       URL = {https://doi.org/10.4007/annals.2011.173.1.13},
}

@article {BPR13,
    AUTHOR = {Bilu, Yuri and Parent, Pierre and Rebolledo, Marusia},
     TITLE = {Rational points on {$X^+_0(p^r)$}},
   JOURNAL = {Ann. Inst. Fourier (Grenoble)},
  FJOURNAL = {Universit\'{e} de Grenoble. Annales de l'Institut Fourier},
    VOLUME = {63},
      YEAR = {2013},
    NUMBER = {3},
     PAGES = {957--984},
      ISSN = {0373-0956},
   MRCLASS = {11G18 (11G05 11G16)},
  MRNUMBER = {3137477},
       DOI = {10.5802/aif.2781},
       URL = {https://doi.org/10.5802/aif.2781},
}

@Article{CGPS22,
 Author = {Clark, Pete L. and Genao, Tyler and Pollack, Paul and Saia, Frederick},
 Title = {The least degree of a {CM} point on a modular curve},
 FJournal = {Journal of the London Mathematical Society. Second Series},
 Journal = {J. Lond. Math. Soc., II. Ser.},
 ISSN = {0024-6107},
 Volume = {105},
 Number = {2},
 Pages = {825--883},
 Year = {2022},
 Language = {English},
 DOI = {10.1112/jlms.12518},
 zbMATH = {7731379}
}

@Article{LeFournLemos,
 Author = {Samuel {Le Fourn} and Pedro {Lemos}},
 Title = {{Residual Galois representations of elliptic curves with image contained in the normaliser of a nonsplit Cartan}},
 FJournal = {{Algebra \& Number Theory}},
 Journal = {{Algebra Number Theory}},
 ISSN = {1937-0652},
 Volume = {15},
 Number = {3},
 Pages = {747--771},
 Year = {2021},
 Publisher = {Mathematical Sciences Publishers (MSP), Berkeley, CA},
 DOI = {10.2140/ant.2021.15.747},
 MSC2010 = {11G05 11G18}
}

@unpublished{LagaShnidman-BiellipticPicardCurves,
	Author = {Laga, Jef and Shnidman, Ari},
NOTE = { J. London Math. Soc., 112: e70347. https://doi.org/10.1112/jlms.70347} ,
	Title = {The geometry and arithmetic of bielliptic {P}icard curves},
	Year = {2023+},
	}

@book {Serre-GaloisCohomology,
    AUTHOR = {Serre, Jean-Pierre},
     TITLE = {Galois cohomology},
    SERIES = {Springer Monographs in Mathematics},
   EDITION = {English},
      NOTE = {Translated from the French by Patrick Ion and revised by the
              author},
 PUBLISHER = {Springer-Verlag, Berlin},
      YEAR = {2002},
     PAGES = {x+210},
      ISBN = {3-540-42192-0},
   MRCLASS = {12G05 (11R34)},
  MRNUMBER = {1867431},
}

@book{serre_elladic,
 author = {Serre, Jean-Pierre},
 title = {Abelian {{\(\ell\)}}-adic representations and elliptic curves},
 fseries = {Research Notes in Mathematics},
 series = {Res. Notes Math.},
 volume = {7},
 isbn = {1-56881-077-6},
 year = {1998},
 publisher = {Wellesley, MA: A K Peters},
 language = {English},
 zbMATH = {1101830},
 Zbl = {0902.14016}
}

@article{DerickxNajman25,
url = {https://doi.org/10.1515/crelle-2025-0069},
title = {Classification of torsion of elliptic curves over quartic fields},
author = {Maarten Derickx and Filip Najman},
pages = {123--156},
volume = {2025},
number = {829},
journal = {J. Reine Angew. Math.},
fjournal = {Journal für die reine und angewandte Mathematik (Crelles Journal)},
doi = {doi:10.1515/crelle-2025-0069},
year = {2025},
lastchecked = {2026-01-15}
}

@article {BruinPryms,
    AUTHOR = {Bruin, Nils},
     TITLE = {The arithmetic of {P}rym varieties in genus 3},
   JOURNAL = {Compos. Math.},
  FJOURNAL = {Compositio Mathematica},
    VOLUME = {144},
      YEAR = {2008},
    NUMBER = {2},
     PAGES = {317--338},
      ISSN = {0010-437X,1570-5846},
   MRCLASS = {11G30 (14H40)},
  MRNUMBER = {2406115},
MRREVIEWER = {Jordi\ Gu\`ardia},
       DOI = {10.1112/S0010437X07003314},
       URL = {https://doi.org/10.1112/S0010437X07003314},
}

@article {Romagny,
    AUTHOR = {Romagny, Matthieu},
     TITLE = {Group actions on stacks and applications},
   JOURNAL = {Michigan Math. J.},
  FJOURNAL = {Michigan Mathematical Journal},
    VOLUME = {53},
      YEAR = {2005},
    NUMBER = {1},
     PAGES = {209--236},
      ISSN = {0026-2285,1945-2365},
   MRCLASS = {14A20 (14H10)},
  MRNUMBER = {2125542},
MRREVIEWER = {Ivan\ S.\ Kausz},
       DOI = {10.1307/mmj/1114021093},
       URL = {https://doi.org/10.1307/mmj/1114021093},
}

@article {Brunault,
    AUTHOR = {Brunault, Fran\c{c}ois},
     TITLE = {On the modularity of endomorphism algebras},
   JOURNAL = {Bull. Pol. Acad. Sci. Math.},
  FJOURNAL = {Bulletin of the Polish Academy of Sciences. Mathematics},
    VOLUME = {71},
      YEAR = {2023},
    NUMBER = {1},
     PAGES = {23--33},
      ISSN = {0239-7269,1732-8985},
   MRCLASS = {11F41 (11F25 11F70 11F80 14G32)},
  MRNUMBER = {4622407},
MRREVIEWER = {Ivan\ Mati\'c},
       DOI = {10.4064/ba230310-29-3},
       URL = {https://doi.org/10.4064/ba230310-29-3},
}

@unpublished {BourdonEjder25,
    AUTHOR = {Bourdon, Abbey and Ejder, \"Ozlem},
     TITLE = {Rational isolated $j$-invariants from {$X_1(\ell^n)$ }and {$X_0(\ell^n)$}},
   NOTE = {preprint, \url{https://arxiv.org/abs/2506.19560} }
}

@unpublished {etalebrauermanin2025,
    AUTHOR = {Dhillon, Ajneet and Lemire, Nicole and   Martin, Jonathan and  Wang, Yidi},   
    TITLE = {The étale Brauer-Manin obstruction for classifying stacks},
   NOTE = {preprint, \url{https://arxiv.org/abs/2512.01014} }
}

@unpublished {Elkies2006,
    AUTHOR = {Elkies, Noam},
     TITLE = {Elliptic curves with $3$-adic {G}alois representation surjective mod $3$ but not mod $9$},
   NOTE = {preprint, \url{https://arxiv.org/abs/math/0612734v1}}}

@article{BHKKLMNS25,
 author = {Bourdon, Abbey and Hashimoto, Sachi and Keller, Timo and Klagsbrun, Zev and Lowry-Duda, David and Morrison, Travis and Najman, Filip and Shukla, Himanshu},
 title = {Towards a classification of isolated {{\(j\)}}-invariants},
 fjournal = {Mathematics of Computation},
 journal = {Math. Comput.},
 issn = {0025-5718},
 volume = {94},
 number = {351},
 pages = {447--473},
 year = {2025},
 language = {English},
 doi = {10.1090/mcom/3956},
 zbMATH = {7929953}
}

@misc{Terao24,
 author = {Terao, Kenji},
 title = {Isolated points on modular curves},
 year = {2024},
 howpublished = {Preprint, {arXiv}:2412.13108 [math.{NT}] (2024)},
 url = {https://arxiv.org/abs/2412.13108},
 arXiv = {arXiv:2412.13108}
}

@misc{DeligneRapoprt73,
 author = {Deligne, Pierre and Rapoport, M.},
 title = {Moduli schemes of elliptic curves},
 year = {1973},
 language = {French},
 howpublished = {Modular {Functions} of one {Variable} {II}, {Proc}. {Int}. {Summer} {School}, {Univ}. {Antwerp} 1972, {Lect}. {Notes} {Math}. 349, 143-316 (1973).},
 doi = {10.1007/978-3-540-37855-6_4},
 zbMATH = {3440554},
 Zbl = {0281.14010}
}

@book{Poonen_book,
 author = {Poonen, Bjorn},
 title = {Rational points on varieties},
 fseries = {Graduate Studies in Mathematics},
 series = {Grad. Stud. Math.},
 issn = {1065-7339},
 volume = {186},
 isbn = {978-1-4704-3773-2; 978-1-4704-4315-3},
 year = {2017},
 publisher = {Providence, RI: American Mathematical Society (AMS)},
 language = {English},
 doi = {10.1090/gsm/186},
 zbMATH = {6828731},
 Zbl = {1387.14004}
}

@book{RibesZalesskii,
 author = {Ribes, Luis and Zalesskii, Pavel},
 title = {Profinite groups.},
 edition = {2nd ed.},
 fseries = {Ergebnisse der Mathematik und ihrer Grenzgebiete. 3. Folge},
 series = {Ergeb. Math. Grenzgeb., 3. Folge},
 issn = {0071-1136},
 volume = {40},
 isbn = {978-3-642-01641-7; 978-3-642-01642-4},
 year = {2010},
 publisher = {Berlin: Springer},
 language = {English},
 doi = {10.1007/978-3-642-01642-4},
 zbMATH = {5662352},
 Zbl = {1197.20022}
}

@article{RSZB22,
 author = {Rouse, Jeremy and Sutherland, Andrew V. and Zureick-Brown, David},
 title = {{{\(\ell\)}}-adic images of {Galois} for elliptic curves over {{\(\mathbb{Q}\)}}(and an appendix with {John} {Voight})},
 fjournal = {Forum of Mathematics, Sigma},
 journal = {Forum Math. Sigma},
 issn = {2050-5094},
 volume = {10},
 pages = {63},
 note = {Id/No e62},
 year = {2022},
 language = {English},
 doi = {10.1017/fms.2022.38},
 zbMATH = {7577487},
 Zbl = {1499.14057}
}

@misc{FurioLombardo252,
 author = {Furio, Lorenzo and Lombardo, Davide},
 title = {On 7-adic {Galois} representations for elliptic curves over {$\mathbb Q $}},
 year = {2025},
 howpublished = {Preprint, {arXiv}:2507.17967 [math.{NT}] (2025)},
 url = {https://arxiv.org/abs/2507.17967},
 arXiv = {arXiv:2507.17967}
}

@misc{FurioLombardo251,
 author = {Furio, Lorenzo and Lombardo, Davide},
 title = {Serre's uniformity question and proper subgroups of {$C_{ns}^+(p)$}},
 year = {2025},
 howpublished = {Algebra Number Theory, to appear. {arXiv}:2305.17780 [math.{NT}] (2025)},
 url = {https://arxiv.org/abs/2305.17780},
 arXiv = {arXiv:2305.17780}
}

@misc{Baletal25,
 author = {Balakrishnan, Jennifer S. and Betts, L. Alexander and Hast, Daniel Rayor and Jha, Aashraya and M{\"u}ller, Jan Steffen},
 title = {Rational points on the non-split {Cartan} modular curve of level 27 and quadratic {Chabauty} over number fields},
 year = {2025},
 howpublished = {Preprint, {arXiv}:2501.07833 [math.{NT}] (2025)},
 url = {https://arxiv.org/abs/2501.07833},
 arXiv = {arXiv:2501.07833}
}

@article{BrauJones16,
 author = {Brau, Julio and Jones, Nathan},
 title = {Elliptic curves with 2-torsion contained in the 3-torsion field},
 fjournal = {Proceedings of the American Mathematical Society},
 journal = {Proc. Am. Math. Soc.},
 issn = {0002-9939},
 volume = {144},
 number = {3},
 pages = {925--936},
 year = {2016},
 language = {English},
 doi = {10.1090/proc/12786},
 zbMATH = {6549095},
 Zbl = {1333.11050}
}

@article{JonesMcMurdy22,
 author = {Jones, Nathan and McMurdy, Ken},
 title = {Elliptic curves with non-abelian entanglements},
 fjournal = {The New York Journal of Mathematics},
 journal = {New York J. Math.},
 issn = {1076-9803},
 volume = {28},
 pages = {182--229},
 year = {2022},
 language = {English},
 url = {nyjm.albany.edu/j/2022/28-9.html},
 zbMATH = {7474314},
 Zbl = {1497.11150}
}

@article{Granville07,
 author = {Granville, Andrew},
 title = {Rational and integral points on quadratic twists of a given hyperelliptic curve},
 fjournal = {International Mathematics Research Notices. IMRN},
 journal = {Int. Math. Res. Not. IMRN},
 issn = {1073-7928},
 volume = {2007},
 number = {8},
 pages = {rnm027, 24},
 year = {2007},
 language = {English},
 url = {https://doi.org/10.1093/imrn/rnm027},
 zbMATH = {5177891},
 Zbl = {1129.11035}
}

@article{Rakvi24,
 author = {Rakvi},
 title = {A classification of genus 0 modular curves with rational points},
 fjournal = {Mathematics of Computation},
 journal = {Math. Comput.},
 issn = {0025-5718},
 volume = {93},
 number = {348},
 pages = {1859--1902},
 year = {2024},
 language = {English},
 doi = {10.1090/mcom/3907},
 zbMATH = {7833097},
 Zbl = {1557.11052}
}
\end{document}